\newtheorem{thm}{Theorem}[section]
\newtheorem{cor}[thm]{Corollary}
\newtheorem{prop}[thm]{Proposition}
\newtheorem{lem}[thm]{Lemma}
\theoremstyle{definition}
\newtheorem{defn}[thm]{Definition}
\newtheorem{theo}{Theorem}
\newcounter{tmp}
\def\<{\langle}
\def\>{\rangle}
\theoremstyle{remark}
\newtheorem{rem}[thm]{Remark}
\let\c@equation\c@thm
\numberwithin{equation}{section}
\date{\today}
\begin{document}
\author[Chinyere]{I. Chinyere}
\address{ Ihechukwu Chinyere\\
Department of Mathematics and Maxwell Institute for Mathematical Sciences\\
Heriot--Watt University\\
Edinburgh EH14 4AS }
\email{ic66@hw.ac.uk}
\author[Howie]{J. Howie}
\address{ James Howie\\
Department of Mathematics and Maxwell Institute for Mathematical Sciences\\
Heriot--Watt University\\
Edinburgh EH14 4AS }
\email{J.Howie@hw.ac.uk}

   \title[Induced one-relator products]{On One-relator  products induced by generalised triangle groups}

   \begin{abstract}
    In this paper we study a group $G$ which is the quotient of a free product of groups by the normal
closure of a word that is contained in a in a subgroup which has the form of a generalised triangle group.
 We use known properties of generalised triangle groups, together with detailed analyses of pictures and of words in free monoids, 
to prove a number of results such as a Freiheitssatz and the existence of Mayer-Vietoris sequences for such groups under suitable hypotheses.  The hypotheses are weaker than those in an earlier article of Howie and Shwartz, yielding generalisations in two directions of the results in that article.
   \end{abstract}
\keywords{One-relator group, generalised triangle groups, Pictures.}
   \date{\today}
   \maketitle

\section{Introduction}
A one-relator product $G$ of groups $G_{_1}$ and $G_{_2}$ is the quotient of the free product $G_{_1} * G_{_2}$ by the normal closure of a single element $W$ which has free product length at least two. 
  Such groups are natural generalisations of one-relator groups, and have been the subject of several articles generalising results from the rich theory of one-relator groups.
In this paper we consider the case where $W=R^n$ is a proper power and contained in a subgroup of $G_{_1} * G_{_2}$ the form $A*B$ with $A$ and $B$ conjugate to cyclic subgroups of $G_{_1}$ and/or $G_{_2}$. We can of course assume that $A$ is generated by some element $a$ and $B$ is generated by $UbU^{-1}$ for some word $U$ and letter  $b$ in $G_{_1} * G_{_2}$. Hence $R$ is a word in $\lbrace a, UbU^{-1}\rbrace$. As in \cite{hs}, we also require in some cases the technical condition that $(a,b)$ be \textit{admissible}: whenever both $a$ and $b$ belong to same factor, say $G_{_1}$, then either the subgroup of $G_{_1}$ generated by $\lbrace a, b\rbrace$ is cyclic or $\langle a\rangle \cap\langle b\rangle=1$.

\medskip
A generalised triangle group is a one-relator product of two finite cyclic groups in which the relator is a proper power. For the sake of this paper however, we will allow the cyclic groups to be infinite. If a one-relator product is in the form of $G$ described above, then we say  $G$ is induced by a generalised triangle group. In such a case $G$ can be realised as a push-out of groups as shown in Figure \ref{push-out}  below where $H$ is the corresponding generalised triangle group.

\medskip
The concept of one-relator product induced by a generalised
triangle group was introduced in \cite{hs}, where a number
of results were proved under the hypotheses that $n\ge 3$ and that the pair $(a,b)$ is admissible.  In the present paper we
prove similar results under hypotheses that are in general weaker than those assumed in \cite{hs}.

\medskip\noindent{\bf Hypothesis A}. $n\ge 2$, $R$
has free-product length at least $4$ as a word in the free product $\< a\>*\< UbU^{-1}\>$, and the pair $(a,b)$ is admissible.

\medskip\noindent{\bf Hypothesis B}. $n\ge 2$
and no letter of $R$ has order $2$ in $G_{_1}$ or $G_{_2}$.

\medskip
Under either of the above hypotheses, we prove the following:

\begin{thm}\label{corro 1}
Let $H$ be the generalised triangle group inducing the one relator product $G$. Then the maps  $G_{_1}\rightarrow G$, $G_{_2}\rightarrow G$ and $H\rightarrow G$ are all injective.
\end{thm}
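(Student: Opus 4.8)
The plan is to prove all three injectivity assertions at once, by exploiting the pushout description of $G$ recorded in Figure~\ref{push-out} and feeding it into the analysis of reduced pictures that forms the technical core of this paper. Write $K=\<a\>*\<UbU^{-1}\>$; then, as in Figure~\ref{push-out}, $G$ is the pushout of $G_{_1}*G_{_2}\longleftarrow K\longrightarrow H$, the first map being the natural inclusion and the second exhibiting $H=K/\langle\langle R^n\rangle\rangle$ as the inducing generalised triangle group. For $n\ge 2$ the cyclic factors of a generalised triangle group, or of a one-relator group with torsion in the case of an infinite factor, are well known to embed, so $\<a\>$ and $\<UbU^{-1}\>$ already inject into $H$; what remains is to show that injectivity survives the passage to the pushout, in the two directions $G_{_1},G_{_2}\to G$ --- a Freiheitssatz for the one-relator product $G$ --- and $H\to G$.

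First I would translate the Freiheitssatz direction into a statement about pictures. Suppose, for a contradiction, that some $g\in G_{_1}$ with $g\ne 1$ dies in $G$. Then there is a connected reduced picture $\mathbb{P}$ on the disc over the relative presentation $\<\,G_{_1}*G_{_2}\mid R^n\,\>$ of $G$ whose boundary label is a word in $G_{_1}$ representing $g$, and we may take $\mathbb{P}$ with the fewest possible discs. As $g\ne 1$ already in $G_{_1}*G_{_2}$, the picture $\mathbb{P}$ is non-empty, so it contains a disc labelled $R^{\pm n}$. Every arc of $\mathbb{P}$ carries a subword of a cyclic permutation of $R^{\pm n}$, hence a subword of a word in $\{a,UbU^{-1}\}$ that genuinely involves $UbU^{-1}$; since the boundary label of $\mathbb{P}$ lies in the single factor $G_{_1}$, this severely restricts how arcs can reach $\partial\mathbb{P}$, and hence constrains the combinatorics of $\mathbb{P}$. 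Combining these restrictions with the curvature estimates for reduced pictures developed in this paper --- where the hypothesis $n\ge 2$, together with either ``$R$ has free-product length $\ge 4$ and $(a,b)$ admissible'' (Hypothesis~A) or ``no letter of $R$ has order $2$'' (Hypothesis~B), forces the $R^{\pm n}$-discs to contribute enough negative curvature --- contradicts the combinatorial Gauss--Bonnet identity, which assigns total curvature $2\pi$ to the disc. Hence $G_{_1}\to G$, and symmetrically $G_{_2}\to G$, is injective.

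For $H\to G$ I would run the analogous argument over a relative presentation of $G$ adapted to $H$: adjoin to $H$ the generators and defining relators of $G_{_1}$ and $G_{_2}$ together with the pushout identifications, so that a nontrivial element of $\ker(H\to G)$ produces a non-empty reduced picture on the disc whose boundary label is a nontrivial word of $H$ and whose discs are of two kinds, those recording relators of $G_{_1}$ and $G_{_2}$ and those recording the relator $R^n$ of $H$. The same curvature bookkeeping, now keeping track of both kinds of disc, excludes such a picture. When the side map $K\to G_{_1}*G_{_2}$ happens to be injective with malnormal image, this is cleaner still, since then $G$ is literally the amalgamated product $H*_{K}(G_{_1}*G_{_2})$ and $H\to G$ is injective by Bass--Serre theory; the picture argument is needed precisely to cover the remaining, possibly degenerate, situations permitted by admissibility.

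The hard part is the picture-combinatorial input invoked above: understanding exactly how the arcs of a reduced picture, each labelled by a subword of a cyclic permutation of $R^{\pm n}$, can be arranged around the discs and across the boundary. This is delicate because $R$ may be a long and complicated word in $\{a,UbU^{-1}\}$, and because weakening the hypotheses of \cite{hs} --- allowing $n=2$, and in Hypothesis~B allowing letters of order $2$ in one of the two factors --- removes much of the curvature slack exploited there, so the borderline configurations have to be analysed by hand. This is exactly the point at which the detailed study of pictures, and of words in free monoids, advertised in the abstract becomes indispensable.
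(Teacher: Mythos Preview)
Your proposal misses the central technical device of the paper: \emph{clique-pictures}. You work throughout with ordinary reduced pictures over the relative presentation $\langle G_{_1}*G_{_2}\mid R^n\rangle$ and appeal to ``curvature estimates for reduced pictures developed in this paper'' that force individual $R^{\pm n}$-discs to contribute enough negative curvature. But the paper proves no such thing, and in general such pictures do \emph{not} satisfy $C(6)$ or any usable small-cancellation condition. What Theorems~A and~B assert is that a minimal \emph{clique}-picture satisfies $C(6)$: one first groups $R^{\pm n}$-vertices into $\approx$-equivalence classes (cliques) governed by the $\{a,UbU^{-1}\}$-structure of $R$, collapses each clique to a single vertex whose label is a word in $\{a,UbU^{-1}\}$, and only then obtains the degree-$6$ lower bound. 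Your Gauss--Bonnet step therefore has no input to feed on.

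For $G_{_1},G_{_2}\to G$ the paper's argument is: a minimal picture with boundary label in $G_{_1}$ (or $G_{_2}$) has no arcs meeting $\partial D^2$, hence gives a spherical clique-picture, and $C(6)$ on $S^2$ contradicts $\chi(S^2)=2$. For $H\to G$ your plan diverges further: you propose a second relative presentation with ``two kinds of disc'' and a separate curvature count. The paper instead stays with the same clique-picture over $G$: if $w\in H$ is nontrivial, arcs do meet $\partial D^2$, and the $C(6)$ condition plus \cite[Chapter~V, Corollary~3.3]{LS} produces a boundary clique $v_0$ of degree $\le 3$. Since a clique-label has length at least $4l$ under Hypothesis~A (by the Spelling Theorem) or $2l$ under Hypothesis~B, Corollary~\ref{lz} or Lemma~\ref{od3} forces a zone from $v_0$ to $\partial D^2$ long enough that $v_0$ can be amalgamated with the boundary, contradicting minimality. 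The induction on the number of cliques, and the preliminary reduction to simply-connected cliques, are also absent from your sketch. The Bass--Serre/malnormality remark is correct but irrelevant, since the paper's argument makes no such assumption and handles all admissible pairs uniformly.
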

\begin{thm}\label{corro 2}
If the word problems are soluble for $H$, $G_{_1}$ and $G_{_2}$, then it is soluble for $G$.
\end{thm}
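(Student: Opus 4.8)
The plan is to combine the push-out description of $G$ from Figure~\ref{push-out} with the Freiheitssatz of Theorem~\ref{corro 1}, turning the word problem for $G$ into a bounded search over pictures. Given a word $w$ in the generators of $G_1*G_2$, one first tests whether $w=1$ already in $G_1*G_2$: since this is the free product of $G_1$ and $G_2$, and the word problems of $G_1,G_2$ are soluble, it is decided by reduction to free-product normal form. If $w\ne 1$ in $G_1*G_2$, then $w=1$ in $G$ if and only if there is a reduced picture with boundary label $w$ over the relative presentation $\<\,G_1*G_2\mid R^n\,\>$ of $G$ --- equivalently, over the push-out presentation obtained by gluing a presentation of $H$ onto presentations of $G_1$ and $G_2$ along the relations identifying $A*B\le G_1*G_2$ with its image in $H$. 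It is this latter, push-out, form that brings the word problem of $H$ into play, alongside those of $G_1$ and $G_2$.

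The crucial ingredient is an effective bound $N=f(|w|)$, computable from $w$, on the number of $R^n$-discs --- equivalently, of $H$-identification regions --- occurring in such a reduced picture, where $|w|$ is the free-product length of $w$. This is precisely what the analysis of pictures, and of the associated words in free monoids, carried out in the earlier sections is meant to supply: a reduced picture avoids prescribed local configurations, and a curvature/counting argument over its regions forces the number of $R^n$-discs to be controlled --- linearly, or by an explicit polynomial --- by the length of the boundary. Once $N$ is known, the underlying cellular structures of pictures with at most $N$ discs form a finite, effectively listable set. For each combinatorial type one then decides realisability: whether the arcs can be labelled by elements of $G_1$, $G_2$ and $H$ so that the label around every $R^n$-disc is a cyclic permutation of $R^{\pm n}$, the label around every $H$-region is the prescribed relation, the labels meeting at each complementary vertex multiply to $1$ in the appropriate factor, and the boundary reads $w$. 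Since $R$ is a fixed word in $\{a,\,UbU^{-1}\}$, the arcs incident to each $R^n$-disc and their labels are determined up to placement, so the remaining freedom is finite and the consistency conditions reduce to finitely many instances of $\mathrm{WP}(G_1)$, $\mathrm{WP}(G_2)$ and $\mathrm{WP}(H)$; all of these are soluble by hypothesis. Testing every candidate decides whether $w=1$ in $G$.

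The main obstacle --- and the point at which the substance of the paper is used --- is the complexity bound $f$. One must verify that pictures over $G$ may be taken reduced (no cancellation between $R^n$-discs); transfer the complexity of $G$-pictures to that of $H$-pictures through the inducing structure, so that the known behaviour of the generalised triangle group $H$ can be exploited; and extract an \emph{effective} inequality from the curvature estimates rather than a mere finiteness statement. A secondary point is to arrange the reduction so that it genuinely terminates in $\mathrm{WP}(H)$ instead of re-expanding $R^n$ indefinitely, which is the reason for retaining $H$ as a single vertex group in the push-out presentation rather than working only over $\<\,G_1*G_2\mid R^n\,\>$.
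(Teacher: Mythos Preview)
Your high-level plan --- isoperimetric bound plus bounded search using $\mathrm{WP}(G_1)$, $\mathrm{WP}(G_2)$, $\mathrm{WP}(H)$ --- matches the paper's, but you have blurred a distinction that is the technical heart of the argument. The $C(6)$ condition proved in Theorems~\ref{thmm1} and~\ref{thmm2} holds for \emph{clique}-pictures, not for ordinary pictures whose vertices are individual $R^n$-discs. Your sentence ``the number of $R^n$-discs --- equivalently, of $H$-identification regions'' identifies these two, and that is where the gap lies. If you work with ordinary pictures, each vertex does have a fixed label $R^{\pm n}$, as you say, but you have no curvature argument: nothing in the paper gives $C(6)$, or any isoperimetric inequality, at that level. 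If instead you work with clique-pictures, then the Area Theorem of \cite[Chapter V]{LS} does bound the number of cliques quadratically in $|w|$, but now each clique-label is an \emph{arbitrary} word in $\{a,UbU^{-1}\}$ representing $1$ in $H$, of a priori unbounded length. Your claim that ``the arcs incident to each $R^n$-disc and their labels are determined up to placement, so the remaining freedom is finite'' therefore fails for cliques.

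The paper fills this gap with a second, separate bound: by Corollary~\ref{lz} (under Hypothesis~A) or Lemma~\ref{od3} (under Hypothesis~B), every zone has fewer than $l$ (respectively $l/2$) arcs, so a clique with label of length $ml$ has degree at least $m$; combining this with the curvature inequality $\sum_v(\deg v-6)\le\ell(\Gamma)$ bounds each clique-label length by $l[\ell(\Gamma)+6f(\ell(\Gamma))]$. Only then is the set of candidate clique-pictures finite and effectively listable, and the word problem for $H$ is invoked to test validity of each clique-label. You should make this two-step bound explicit; without it the ``bounded search'' does not terminate.
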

\begin{thm}\label{corro 3}
If a cyclic permutation of $R^n$ has the form $W_{_1}W_{_2}$ with $0<\ell(W_{_1}),\ell(W_{_2})<\ell(R^n)$ as words in $G$, then $W_{_1}\neq 1\neq W_{_2}$ as words in $G$. In particular $R$ has order  $n$ in $G$.
\end{thm}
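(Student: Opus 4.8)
The plan is to derive the statement, by a short contradiction argument, from the study of reduced pictures over the one-relator presentation of $G$ that is carried out in the earlier sections; the genuine work lies there, and what is left is to assemble it correctly.

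\emph{Reductions.} Cyclically permuting $R^n$ changes neither $G$ nor the assertion, and if $W_{_1}W_{_2}$ is a cyclic permutation of $R^n$ then so is $W_{_2}W_{_1}$; so we may take $W_{_1}W_{_2}$ to be a reduced word equal in $G_{_1}*G_{_2}$ to a conjugate of $R^n$, and hence $W_{_1}W_{_2}=1$ in $G$. Then $W_{_1}=1$ in $G$ if and only if $W_{_2}=1$ in $G$, so it suffices to prove $W_{_1}\ne 1$ in $G$. With $R$ cyclically reduced (as we may assume) we have $\ell(R^n)=n\,\ell(R)$ and $\ell(W_{_1})+\ell(W_{_2})=\ell(R^n)$, so, since $n\ge 2$, one of $W_{_1},W_{_2}$ — say $W_{_1}$ after relabelling — satisfies $\ell(W_{_1})\le\frac12\ell(R^n)=\frac n2\,\ell(R)\le(n-1)\,\ell(R)$. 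Granting the assertion, the last sentence follows by taking $W_{_1}=R^k$, $W_{_2}=R^{n-k}$ for $1\le k\le n-1$, which gives $R^k\ne 1$ in $G$ for $0<k<n$; together with $R^n=1$ in $G$ this says precisely that $R$ has order $n$.

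\emph{The contradiction.} Suppose $W_{_1}=1$ in $G$. Being a non-empty subword of the cyclically reduced word $R^n$, $W_{_1}$ is reduced and represents a non-trivial element of $G_{_1}*G_{_2}$; hence any picture over $G$ on the disc with boundary label $W_{_1}$ contains at least one disc, and we fix a reduced one. The picture analysis of the earlier sections supplies a relative analogue of Newman's spelling theorem: a reduced picture over $G$ on the disc that contains at least one disc has boundary label of length strictly greater than $(n-1)\,\ell(R)$. Applying this to our picture gives $\ell(W_{_1})>(n-1)\,\ell(R)$, contradicting the inequality above; hence $W_{_1}\ne 1$ in $G$, and therefore $W_{_2}\ne 1$ in $G$. (If one prefers not to extract so sharp an estimate, one can instead glue the chosen picture for $W_{_1}=1$, a picture for $W_{_2}=1$ and the single relator disc with boundary label $W_{_1}W_{_2}$ into a non-empty spherical picture over $G$, and appeal to the relative asphericity of the presentation established earlier: it forces a cancelling pair, which after the usual bridge moves yields a picture on the disc with boundary $W_{_1}$ or $W_{_2}$ having fewer discs than a chosen minimal one — again impossible.)

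\emph{Where the difficulty lies.} Everything hinges on the input just quoted. Because $n$ may be as small as $2$, $R^n$ is not a small-cancellation relator and crude bounds on its pieces are useless; what rescues the argument is that $R^n$ lies in the subgroup $\langle a\rangle*\langle UbU^{-1}\rangle$, that the group it presents there is the generalised triangle group $H$ — in which $R$ really has order $n$ and which embeds in $G$ by Theorem~\ref{corro 1} — and the careful analysis of the sequences of labels that occur around the discs of a reduced picture. This is exactly where admissibility (Hypothesis~A) or the absence of letters of order $2$ in $R$ (Hypothesis~B), together with the combinatorics of words in free monoids, enters, and it is there that the main obstacle of the whole development sits.
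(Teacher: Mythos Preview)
Your primary argument has a genuine gap. You invoke ``a relative analogue of Newman's spelling theorem: a reduced picture over $G$ on the disc that contains at least one disc has boundary label of length strictly greater than $(n-1)\,\ell(R)$.'' No such statement is proved in the paper. What the earlier sections establish is that minimal \emph{clique}-pictures satisfy $C(6)$, together with bounds on individual zone lengths (Corollary~\ref{lz}, Lemma~\ref{od3}). From $C(6)$ one gets the usual Greendlinger-type consequences about boundary cliques of low interior degree, but clique-labels have variable lengths, so this does not translate directly into the sharp global length bound you assert. In the borderline cases ($n=2$, $R$ of length $2$ in $\langle a\rangle*\langle UbU^{-1}\rangle$) a crude count of the boundary contribution of one or two low-degree boundary cliques falls short of $(n-1)\ell(R)$. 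So the step ``applying this to our picture gives $\ell(W_{_1})>(n-1)\ell(R)$'' is unjustified as written. Your parenthetical alternative via asphericity is also not the paper's route, and as sketched does not explain why cancelling a pair in the glued spherical picture yields a disc picture with boundary precisely $W_{_1}$ (or $W_{_2}$) and strictly fewer vertices than the chosen minimal one.

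The paper's argument is short but uses a specific trick you have missed. Take a \emph{minimal clique-picture} $\Gamma$ with boundary label $W_{_1}$, and adjoin a single new vertex labelled $R^{-n}$ along the $W_{_1}$-portion of its boundary to obtain a clique-picture $\tilde\Gamma$ with boundary label $W_{_2}^{-1}$. By minimality of $\Gamma$ no amalgamation is possible, so $\tilde\Gamma$ is reduced and hence $C(6)$; moreover it has exactly one boundary clique. Now apply \cite[Chapter~V, Corollary~3.4]{LS}: a $C(6)$ diagram with a single boundary region consists of that region alone; dually, $\tilde\Gamma$ has a single clique. Hence (up to conjugacy) $W_{_2}$ is a word in $\{a,UbU^{-1}\}$ representing the identity in $H$, of length strictly less than that of $R^n$. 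This contradicts the Spelling Theorem~\ref{SP} for the generalised triangle group $H$. The point is that the length comparison is carried out \emph{inside $H$}, via the Spelling Theorem, after the one-boundary-vertex lemma has reduced the picture to a single clique; no general boundary-length estimate over $G$ is needed.
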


\begin{thm}\label{corro 4}
The pushout of groups in Figure 1 below is {\em geometrically
Mayer-Vietoris} in the sense of \cite{eh}.  In particular it
gives rise to Mayer-Vietoris sequences
$$\cdots \to H_{k+1}(G,M)\to H_k(A*B,M)\to$$ $$ H_k(G_{_1}*G_{_2},M)\oplus H_k(H,M)\to H_k(G,M)\to\cdots  $$
and $$\cdots \to H^k(G,M)\to H^k(G_{_1}*G_{_2},M)\oplus H^k(H,M)$$
$$\to H^k(A*B,M)\to H^{k+1}(G,M)\to\cdots $$ for any $\mathbb{Z}G$-module $M$.
\end{thm}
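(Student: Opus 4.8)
The plan is to deduce the theorem from the criterion of \cite{eh} by establishing a single relative asphericity property of the presentation defining $G$. Recall that $G$ is the quotient of $G_{_1}*G_{_2}$ by the normal closure of $R^n$; we wish to build an Eilenberg--MacLane complex $X=K(G,1)$ with subcomplexes $X_{_1}\simeq K(G_{_1}*G_{_2},1)$ and $X_{_2}\simeq K(H,1)$ satisfying $X_{_1}\cup X_{_2}=X$ and $X_{_1}\cap X_{_2}=X_{_0}\simeq K(A*B,1)$, the inclusions inducing the four maps of the square of Figure 1 on fundamental groups. Such a decomposition is exactly what it means for the square to be geometrically Mayer--Vietoris, and by \cite{eh} the two displayed long exact sequences are then the homology and cohomology Mayer--Vietoris sequences of $X=X_{_1}\cup X_{_2}$ with coefficients in $M$. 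Theorem \ref{corro 1} enters here: it guarantees $A*B\hookrightarrow G$ and $H\hookrightarrow G$, so that $X_{_0}\hookrightarrow X$ and $X_{_2}\hookrightarrow X$ induce injections on $\pi_1$ and the terms $H_k(A*B,M)$ and $H_k(H,M)$ are correctly interpreted as group (co)homology with coefficients restricted from $M$.

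To construct $X$, take $X_{_1}=K(G_{_1},1)\vee K(G_{_2},1)$, chosen (by passing to a mapping cylinder if need be) to contain a subcomplex $X_{_0}$ which is a classifying space for the subgroup $A*B=\<a\>*\<UbU^{-1}\>$ of $G_{_1}*G_{_2}$; let $X_{_2}$ be a classifying space for $H$ built in the standard manner from the presentation $\<G_{_1}*G_{_2}\mid R^n\>$ restricted to $A*B$ --- attach a single $2$-cell $e$ along $R^n$ (a word, hence a loop, lying in $X_{_0}$), then attach higher-dimensional cells of dimension $3,4,5,\dots$ in the usual way --- so that $X_{_0}\subseteq X_{_2}=X_{_0}\cup e\cup(\text{higher cells})$; and put $X=X_{_1}\cup_{X_{_0}}X_{_2}$. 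Then $\pi_1(X)\cong(G_{_1}*G_{_2})*_{A*B}H\cong G$ by van Kampen's theorem, and $X_{_1}\cap X_{_2}=X_{_0}$ by construction, so everything reduces to showing that $X$ is aspherical.

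Writing $P=X_{_1}\cup e$ for the relative presentation complex of $G$ over $G_{_1}*G_{_2}$, asphericity of $X$ is equivalent to the relative asphericity statement: \emph{the $\mathbb{Z}G$-module $\pi_2(P)$ is generated by the image of $\pi_2$ of the subcomplex $X_{_0}\cup e$} --- equivalently, by the companion spherical picture $\mathbb{S}$ of $R^n$ --- equivalently again, every reduced spherical picture over $\<G_{_1}*G_{_2}\mid R^n\>$ reduces, by bridge moves and the insertion and deletion of dipoles, to a union of translated copies of $\mathbb{S}$. Granting this: by Theorem \ref{corro 3}, $R$ has order exactly $n$ in $G$, so $\mathbb{S}$ (two $R^n$-discs joined by a band of arcs spelling $R$) is defined and supported over $\<A*B\mid R^n\>$, and by the classical theory of generalised triangle groups it already generates $\pi_2(X_{_0}\cup e)$ over $\mathbb{Z}H$; the first $3$-cell of the tower is attached along $\mathbb{S}$ and kills $\pi_2(P)$, and the relative asphericity then propagates up the tower, the periodicity forced by the proper power $R^n$ making each successive attaching map a local modification of the preceding cell. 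Hence the tower caps $P$ off to an aspherical $X$ in exactly the way it caps $X_{_0}\cup e$ off to the aspherical $K(H,1)=X_{_2}$, so $X=X_{_1}\cup X_{_2}$ is the required $K(G,1)$.

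The main difficulty lies entirely in the relative asphericity statement, which is the strengthening of Theorems \ref{corro 1}--\ref{corro 3} supplied by the analysis of pictures and of words in free monoids carried out in the preceding sections. Given a reduced spherical picture over $\<G_{_1}*G_{_2}\mid R^n\>$ that is not already supported over $\<A*B\mid R^n\>$, one runs the same curvature and small-cancellation arguments on its arcs and regions as in the proof of the Freiheitssatz --- using, according to the hypothesis in force, that $R$ has free-product length at least $4$ (Hypothesis A) or that no letter of $R$ has order $2$ (Hypothesis B), together with the admissibility of $(a,b)$ --- to exhibit inside it a configuration already shown to be impossible, a contradiction. Once this is established the construction above goes through unchanged, the pushout of Figure 1 is geometrically Mayer--Vietoris, and the stated homology and cohomology sequences follow.
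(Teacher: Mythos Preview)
Your overall architecture is correct and matches the paper's: build $X=X_{_1}\cup_{X_{_0}}X_{_2}$ with $X_{_0},X_{_1},X_{_2}$ aspherical classifying spaces realising the square on $\pi_1$, then prove $X$ is aspherical. Two substantive steps in your argument are not actually carried out, however.

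\textbf{The reduction from asphericity to $\pi_2(X)=0$.} Your passage ``the relative asphericity then propagates up the tower, the periodicity forced by the proper power $R^n$ making each successive attaching map a local modification of the preceding cell'' is not a proof. The higher cells of $X_{_2}$ are attached along spheres in $X_{_0}\cup e$ chosen to kill $\pi_k$ of \emph{that} subcomplex over $\mathbb{Z}H$; there is no automatic reason they kill $\pi_k$ of $P\cup(\text{cells of dim}\le k)$ over $\mathbb{Z}G$. (Your aside that $\mathbb{S}$ generates $\pi_2(X_{_0}\cup e)$ over $\mathbb{Z}H$ ``by the classical theory of generalised triangle groups'' is also not a standard fact and in any case does not help with $\pi_{k\ge 3}$.) The paper handles this cleanly: by Theorem~\ref{corro 1} and Kurosh, the kernels of $A*B\to G$ and $G_{_1}*G_{_2}\to G$ are free, hence of homological dimension~$1$; then \cite[Theorem 4.2]{jh1} reduces asphericity of $X$ to the single statement $\pi_2(X)=0$.

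\textbf{The proof that $\pi_2(X)=0$.} Your sentence ``one runs the same curvature and small-cancellation arguments on its arcs and regions as in the proof of the Freiheitssatz \dots\ to exhibit inside it a configuration already shown to be impossible'' skips the mechanism. The $C(6)$ result of the paper (Theorems~\ref{thmm1} and~\ref{thmm2}) is for \emph{clique}-pictures, not ordinary pictures, and the argument is not a one-shot contradiction but an iteration: given $h:S^2\to X$, one first uses Theorem~\ref{corro 1} inductively to arrange that every clique is simply connected, then passes to the clique-picture; either it is reduced (hence $C(6)$, hence empty on $S^2$, so $h(S^2)\subset X_{_1}$) or a clique-amalgamation is possible. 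The key observation is that a clique-amalgamation changes $h$ by the class of a sphere in $X_{_2}$, which is null-homotopic since $X_{_2}$ is aspherical. After finitely many amalgamations the clique-picture is empty and $h$ is null-homotopic in $X_{_1}$. None of this is visible in your sketch, and in particular the role of Theorems~\ref{thmm1}, \ref{thmm2} and the asphericity of $X_{_2}$ in the amalgamation step should be made explicit.
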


The first part of Theorem \ref{corro 1} is a generalisation of Magnus' Freiheitssatz for one-relator groups \cite{Mag1}.
There are many generalisations of the Freiheitssatz to one-relator products of a special nature.  Most relevant to the present paper, it was proved for arbitrary one-relator products in \cite{jh1,jh2}, provided $n\ge 4$. Theorem \ref{corro 2} was proved by Magnus \cite{Mag2} for one-relator groups. Known versions for one-relator products include the case when $n\ge 4$ \cite{jh3}. Theorem \ref{corro 3} is a version of a result of Weinbaum
\cite{Wein} for one-relator groups. All four of these results were proved in \cite{hs} under the hypotheses that $n\ge 3$ and that the pair $(a,b)$ is admissible.

\medskip
The above theorems will  proven using  the notion of {\em clique-picture}
from \cite{hs} and Theorems \ref{thmm1} and \ref{thmm2} below.
\begingroup
\setcounter{tmp}{\value{theo}}
\setcounter{theo}{0} 
\renewcommand\thetheo{\Alph{theo}}
\begin{theo}\label{thmm1}
 If Hypothesis A above holds, then a minimal  clique-picture over $G$ satisfies  the small-cancellation condition $C(6)$.
\end{theo}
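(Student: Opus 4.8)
The plan is to establish the $C(6)$ small-cancellation condition for a minimal clique-picture $\Gamma$ over $G$ by analysing the local structure around each clique-vertex and bounding the lengths of pieces. Recall that in a clique-picture the discs are grouped into cliques, each clique corresponding to a sub-picture over the generalised triangle group $H$, and the edges between cliques carry labels that are words in the factors $G_1$ and $G_2$. The strategy is: first, import from \cite{hs} (or re-prove) the basic normalisation properties of minimal clique-pictures — that no clique is a ``sphere'', that the labelling around each clique corner is reduced, and that consecutive edges at a vertex of $\Gamma$ cannot both be cancelled without contradicting minimality. Second, I would show that a \emph{piece} — a common sub-path in the boundaries of two cliques — has length bounded in terms of the structure of $R$; specifically, because $R$ has free-product length at least $4$ (Hypothesis A), any identification between two clique-boundaries can absorb at most a controlled fraction of $R^n$, so each clique-boundary meets any single neighbouring clique in at most (roughly) a sixth of its length.

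The key steps, in order, are as follows. First, set up the combinatorics: let $\Gamma$ be a minimal clique-picture, let $\kappa$ be a clique-vertex, and describe $\partial\kappa$ as a cyclic word obtained from $R^n$ (up to the push-out identifications coming from $H$) together with the ``admissibility'' data on $(a,b)$. Second, prove a local lemma: at any vertex $v$ of $\Gamma$ lying on $\partial\kappa$, the contribution of $v$ together with the incident edge-labels cannot be trivial in the relevant factor group, using admissibility to rule out the bad coincidences (this is where the hypothesis that $\langle a\rangle\cap\langle b\rangle=1$ or $\langle a,b\rangle$ cyclic is used). Third, convert this into a bound on piece-length: a piece shared by $\kappa$ and a neighbouring clique $\kappa'$ corresponds to a common subword, and using the length-$\ge 4$ hypothesis on $R$ plus the fact that $n\ge 2$ (so $R^n$ has length $\ge 8$), show that no piece can be longer than $\lfloor \ell(R^n)/6\rfloor$ — equivalently, that $\partial\kappa$ cannot be covered by five or fewer pieces. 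Fourth, combine with an Euler-characteristic / curvature count over $\Gamma$: if some clique had fewer than six pieces on its boundary we could perform a reduction (a diamond move or a bridge move) contradicting minimality of $\Gamma$, or else derive a contradiction with the non-sphericity normalisation. Concluding, this yields $C(6)$.

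The main obstacle I anticipate is the piece-length bound in the presence of the generalised triangle group relations inside each clique. Unlike the classical one-relator-product setting where $\partial\kappa$ literally reads a cyclic permutation of $R^n$, here the clique is itself a non-trivial picture over $H$, so two cliques can be identified along a path that is ``short in $R^n$'' but still long as a word because of cancellation facilitated by the $H$-relations. Controlling this requires the detailed analysis of words in the free monoid $\{a,UbU^{-1}\}^*$ alluded to in the abstract: one must show that a cyclic subword of $R^n$ of length exceeding a sixth determines $R$ (up to the obvious symmetries), so that two overlapping long pieces would force the two cliques to coincide, against minimality. A secondary difficulty is handling the degenerate small cases — where $\ell(R)=4$ exactly and $n=2$ — by hand, since the generic counting argument leaves little slack; I expect these to need separate, explicit verification that the admissibility condition still forbids the offending configurations. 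Once the piece bound is in place, the passage to $C(6)$ is a routine curvature argument of the kind standard in small-cancellation theory.
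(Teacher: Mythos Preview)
Your proposal has a structural misconception that would prevent it from going through. You write as if the boundary $\partial\kappa$ of a clique is (close to) a cyclic permutation of $R^n$, and then aim for a piece-bound of the form $\lfloor\ell(R^n)/6\rfloor$. But a clique is an entire sub-picture over $H$ with possibly many vertices, so its label has the form $\prod_{i=1}^k a^{\alpha_i}Ub^{\beta_i}U^{-1}$ with $k$ a~priori unbounded; the relevant length scale is $l=\ell(aUbU^{-1})$, not $\ell(R^n)$. The lower bound on $k$ comes from the Spelling Theorem for generalised triangle groups (since the clique-label is trivial in $H$), which under Hypothesis~A gives $k\ge 2n$. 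So the target is not ``pieces shorter than a sixth of $R^n$'' but rather ``zones shorter than $l$'', and even that only yields degree $\ge 2n+1\ge 5$; a separate argument is needed to exclude degree exactly $5$.

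The missing engine is a periodicity argument in the free monoid, not a ``subword determines $R$'' rigidity statement. A zone joining $v$ to a neighbour $u$ identifies a subword $s_i$ of $\mathrm{label}(v)$ with the inverse of a subword of $\mathrm{label}(u)$; since the cliques cannot be amalgamated, the two labels are misaligned modulo $l$ by some $\mu_i\in(0,l/2]$, and this forces a suitable extension $s_i^+$ of $s_i$ to be \emph{virtually periodic} of period $\mu_i$ (periodic except at the special letters, which may vary among powers of $a$ or of $b$; admissibility is exactly what makes this notion well-behaved). A Fine--Wilf theorem for virtual periodicity then says that a zone of length $\ge l$ would give $\mathrm{label}(v)$ a virtual period strictly dividing $l$, which produces a refinement of the generalised triangle group description --- contradiction. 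This gives $\omega_i<l$ and hence degree $\ge 5$. To exclude degree $5$ one proves a finer pairwise estimate (for each $i$, either $\omega_{i-1}+\omega_i<3l/2$ or $\omega_i+\omega_{i+1}<3l/2$), again via the virtual-periodicity/Fine--Wilf mechanism applied to overlapping extended words $s_i^+$, $s_{i\pm1}^+$. Your outline contains neither the virtual-periodicity notion nor the two-stage structure, and the proposed curvature/reduction step at the end is not what does the work here.
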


\begin{theo}\label{thmm2}
If Hypothesis B above holds, then a minimal  clique-picture over $G$ satisfies  the small-cancellation condition $C(6)$.
\end{theo}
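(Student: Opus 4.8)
\textbf{Proof idea for Theorem \ref{thmm2}.}
The plan is to run the same argument as for Theorem \ref{thmm1}, but to replace every appeal to admissibility and to the length bound $\ell(R)\ge 4$ by one that exploits the hypothesis that no letter of $R$ has order $2$. We argue by contradiction: suppose $\mathbb{P}$ is a minimal clique-picture over $G$ that fails the condition $C(6)$, and let $\Delta$ be an interior region of $\mathbb{P}$ with at most five sides. First I would recall from \cite{hs} the anatomy of a clique-picture --- in particular how the boundary label of a clique-vertex is a cyclic subword of $R^{\pm n}$, read first in $\<a\>*\<UbU^{-1}\>$ and then refined into letters of $G_{_1}*G_{_2}$, and how the edges appearing on $\partial\Delta$ correspond to \emph{pieces}, that is, to subwords along which two clique-vertices are glued. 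I would then fix an additive length function on these edges and list the elementary reductions of clique-pictures (bridge moves, cancellation of dipoles, deletion of floating subpictures) that the minimality of $\mathbb{P}$ forbids; these already exclude interior regions with one or two sides, so we may assume $\Delta$ has $3$, $4$ or $5$ sides.

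The core of the argument is a bound on the length of a piece. I would show that, under Hypothesis B, any subword along which two clique-vertices are glued is strictly shorter than $R$ --- and, when $n=2$, is controlled by the self-overlap structure of the relator of $H$ carried by a clique-vertex --- so that a $k$-sided interior region with $k\le 5$ either admits one of the forbidden reductions or forces an inconsistency among the words it reads; either way we contradict the choice of $\mathbb{P}$. This is exactly the point at which the order-$2$ hypothesis is used: a letter of order $2$ is its own inverse, and such letters create the extra coincidences among the letters of $R^{\pm n}$ that would otherwise yield long pieces and hence short regions. The verification uses the free-monoid and free-product word combinatorics developed earlier in the paper: one writes down the candidate boundary data of $\Delta$, carries out the cancellations available in $G_{_1}*G_{_2}$, and shows that what remains is incompatible with $\Delta$ being a region of a minimal picture.

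The step I expect to be the main obstacle is establishing the piece-length bound itself from the order-$2$-free hypothesis alone --- the analogue, under the weaker Hypothesis B, of the bound that admissibility furnishes in the proof of Theorem \ref{thmm1}. Its tightest instance is $n=2$, where a clique-vertex carries only $R^{\pm 2}$ and so offers the least slack: there one must analyse directly how the letters of $R$ and of $R^{-1}$ can coincide in $G_{_1}*G_{_2}$, using that $R$ has free-product length at least two there, that $\<a\>\cap\<UbU^{-1}\>$ is controlled inside $G$, and that no letter equals its own inverse, and then check every configuration of a $3$-, $4$- or $5$-sided interior region by hand. Once this bound is in place, the Euler-characteristic count showing that a clique-picture violating $C(6)$ must contain an offending interior region is routine, and the proof concludes as for Theorem \ref{thmm1}.
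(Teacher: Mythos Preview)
Your plan has the right contradiction-and-piece-bound shape, but it rests on two misunderstandings and underestimates what the order-$2$ hypothesis must actually deliver.

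First, two points of anatomy. In this paper the $C(6)$ condition is on \emph{cliques}, not regions: failure means an interior clique $v$ with at most five incident zones, not a region $\Delta$ with at most five sides. More seriously, a clique-label is \emph{not} a cyclic subword of $R^{\pm n}$; it is an arbitrary word $\prod_{i=1}^k a^{\alpha_i}Ub^{\beta_i}U^{-1}$ that happens to be trivial in $H$, constrained a priori only by the Spelling Theorem (Theorem \ref{SP}). One of the first real steps in the paper's argument is to use Theorem \ref{SP} together with Proposition \ref{prop1} to force $R=aUbU^{-1}$ and $\mathrm{label}(v)=(aUbU^{-1})^2$ once the zone bound below limits the label-length to $2l$.

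Second, the piece bound you propose --- ``strictly shorter than $R$'' --- is too weak. With a label of length $2l$ (where $l=\ell(aUbU^{-1})$), five zones each of size $<l$ give no contradiction. The paper proves $\omega_i<l/2$ (Lemmas \ref{od2}--\ref{od3}); this is precisely where the no-order-$2$ hypothesis does its work, via the observation that a zone of size $\ge l/2$ forces some letter $z$ to satisfy $z=z^{-1}$ or else permits an amalgamation or refinement. This sharper bound disposes of degree $\le 4$ and leaves exactly five zones.

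Third --- and this is the real gap --- even $\omega_i<l/2$ does \emph{not} finish the five-zone case by counting: $5(l/2-1)$ comfortably exceeds $2l$. The paper's proof here is a substantial case analysis of how the four special letters of $(aUbU^{-1})^2$ distribute among the five zones (inequalities (\ref{isteq1}) and (\ref{isteq2}), Figure \ref{fig:fig}), driven by a suite of periodicity and mirror-image lemmas (Lemmas \ref{le8}, \ref{le88}, \ref{pee}, \ref{peece}, \ref{imag}) together with Lemma \ref{nz2}. Each configuration is shown either to manufacture a letter of order $2$ or to produce a refinement of the generalised-triangle-group description, contradicting maximality. Your phrase ``check every configuration by hand'' names the right task, but the checking is the entire substance of the proof and needs these periodicity tools, not merely free-product cancellation.
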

\endgroup

\medskip
 These results naturally lead one to speculate that similar results apply for any relator $R^n$ with $n\ge 2$, without any of the restrictions in Hypotheses A and B.
A less ambitious speculation might be that the condition of
admissibility could be dropped from Hypothesis A.
We are not aware of any counterexamples to such speculation, but the methods currently available to us do not permit us to
weaken either of the hypotheses.

\medskip
The rest of the paper is arranged in the following way. In Section \ref{sec2} we define some of the terminologies that are used in the paper. In Section \ref{sec3} we consider the various possible generalised triangle group descriptions and how they are related via push-out diagrams. In Section \ref{sec4} we recall the idea of pictures and clique-pictures. In Section \ref{sec5} we set-up some notations as well as prove general preliminary results about clique-pictures. Sections \ref{sec6} and \ref{sec7} contains various Lemmas that are used to prove Theorems \ref{thmm1} and \ref{thmm2}. Section \ref{sec8} is the final section. There we give a proof of Theorems \ref{corro 1}- \ref{corro 4}.

\medskip
It is worth mentioning at this point that most of the arguments in this paper are adaptations of the ones found in \cite{jh2}, \cite{jh1} and \cite{hs}. The reader can consult those for better understanding.
\section{Preliminaries}\label{sec2}
In this section we give some basic definitions and results on periodic words in a free monoid.

\medskip
Let $I$ be an indexing set and $S=\lbrace z_{_i}\rbrace_{i\in I}$ be a set and $S^*$ the free monoid on $S$. Each $z_{_i}$ is called a \textit{letter}. We assume $S$ is equipped with an involution $z_{_i} \mapsto {z_{_i}}^{-1}$. A word in $S^*$ is just a collection of letters. Let $w=z_{_1} z_{_2} \cdots z_{_n}\in S^*$ for some integer $n\geq0$. Then $n$ is called the length of $w$ and is denoted by $\ell(w)$.  A \textit{segment} of $w$ is a collection of consecutive letters in $w$. A segment is called \textit{initial} if it has the form $z_{_1} z_{_2} \cdots z_{_k}$ for some $k\leq n$ and terminal if it is of the form $z_j z_{_{j+1}} \cdots z_{_n}$ for $j\geq1$. We call a segment of $w$ \textit{proper} if it misses at least one letter in $w$. Let $u=z_{_i} z_{i+1} \cdots z_{_{i+t}}$ and $v=z_j z_{_{j+1}} \cdots z_{_{j+s}}$ be segments of $w$. Suppose without loss of generality that $i+t\leq j+s$, we say $u$ and $v$ \textit{intersect} if $j\leq i+t$. In which case the \textit{intersection} is the segment $u$ if $j\leq i$ and $z_j z_{_{j+1}} \cdots z_{_{i+t}}$ otherwise. The \textit{involute} of $w$ is ${w}^{-1}={z_{_n}}^{-1} {z_{_{n-1}}}^{-1} \cdots {z_{_1}}^{-1}$. We call $w$ a \textit{proper power} if it has the form $w=u^t$ for some proper initial segment $u$ of $w$. A \textit{cyclic permutation} of $w$ is a word of the form $z_{_{\rho(1)}} z_{_{\rho(2)}} \cdots z_{_{\rho(n)}}$ where $\rho$ is some power of the permutation $(1 ~2 \cdots ~ n)$. A \textit{proper cyclic permutation} is one in which $\rho$ is not the identity. Two words $u$ and $v$ are said to be \textit{identically equal}, written $u\equiv v$,  if they are equal in $S^*$. The notation $=$ will be reserved for equality in some quotient group.

\begin{defn}
A word $w$ of length $n$ has a period $\gamma$ if $\gamma\leq n$  and $w_{_i}=w_{_{i+\gamma}}$ for all $i\leq n-\gamma$.
\end{defn}

\begin{defn}\label{pe}
A word $w$ is said to be bordered by $u$ and $v$  if   $u$ and $v$ are proper initial  and a terminal segments of $w$ respectively. Furthermore we say that $w$ is \textit{bordered} by $u$ if $u\equiv v$.
\end{defn}

\begin{rem}\label{bor}
It follows immediately that a $w$ bordered by $u$ has period $\gamma=\ell(w)-\ell(u)$.
\end{rem}

\begin{thm}\cite{FW}\label{tm}
Let $w$ be a word having periods $\gamma$ and $\rho$ with $\rho\leq\gamma$. If $\ell(w)\geq \gamma + \rho-gcd(\gamma,\rho)$, then $w$ has period $gcd(\gamma,\rho)$.
\end{thm}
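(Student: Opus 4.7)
The stated result is the classical Fine--Wilf theorem on words with two periods, and the plan is to prove it by strong induction on $\gamma+\rho$.

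For the base case, when $\gamma=\rho$ one has $\gcd(\gamma,\rho)=\gamma=\rho$ and there is nothing to do. For the inductive step I would assume $\rho<\gamma$, set $\gamma'=\gamma-\rho$, and aim to show that $w$ also has period $\gamma'$. Once this is established, the inductive hypothesis applied to the pair of periods $(\rho,\gamma')$ finishes the proof: one has $\gcd(\rho,\gamma')=d$, the sum $\rho+\gamma'=\gamma$ is strictly smaller than $\gamma+\rho$, and the length condition is preserved because $\ell(w)\geq\gamma+\rho-d\geq\rho+\gamma'-d$.

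To establish that $w$ has period $\gamma'$, I would fix an index $i$ with $1\leq i\leq\ell(w)-\gamma'$ and produce a short chain of period-$\gamma$ and period-$\rho$ relations linking $w_i$ to $w_{i+\gamma'}$. If $i+\gamma\leq\ell(w)$, one chains $w_i=w_{i+\gamma}=w_{i+\gamma'+\rho}=w_{i+\gamma'}$, using period $\gamma$ forward and then period $\rho$ backward (valid since $(i+\gamma')+\rho=i+\gamma\leq\ell(w)$). Symmetrically, if $i\geq\rho+1$, one chains $w_i=w_{i-\rho}=w_{i-\rho+\gamma}=w_{i+\gamma'}$, using period $\rho$ backward and then period $\gamma$ forward. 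Together these cover every $i$ except when simultaneously $i>\ell(w)-\gamma$ and $i\leq\rho$, which forces $\ell(w)<\gamma+\rho$. In the narrow regime $\gamma+\rho-d\leq\ell(w)<\gamma+\rho$ this leaves at most $d$ residual boundary indices.

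For these boundary indices I would use B\'ezout's identity to write $d=\alpha\gamma-\beta\rho$ with small positive coefficients and build a longer chain of $\pm\gamma,\pm\rho$ shifts between $i$ and $i+\gamma'$ whose intermediate indices all remain inside $[1,\ell(w)]$. It is precisely here that the hypothesis $\ell(w)\geq\gamma+\rho-d$ becomes essential, and in fact sharp: it is exactly what is required to guarantee that such a walk can be arranged without getting stuck against either end of the word. The main obstacle is this residual boundary case; outside it the two one-step chains above handle everything mechanically, and the induction takes care of the rest via the Euclidean reduction $(\gamma,\rho)\mapsto(\rho,\gamma-\rho)$.
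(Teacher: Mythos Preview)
The paper does not prove this result; it is quoted from Fine and Wilf \cite{FW} without argument, so there is no in-paper proof to compare your approach against.

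Your Euclidean induction on $\gamma+\rho$ is a standard strategy, and the two one-step chains you give correctly cover every index except those $i$ with $\ell(w)-\gamma<i\le\rho$. But that residual range is precisely where the theorem has content, and your treatment of it is a promissory note rather than a proof. Invoking B\'ezout for $d$ is suggestive but does not by itself exhibit a sequence of $\pm\gamma,\pm\rho$ shifts from $i$ to $i+\gamma'$ whose partial sums all lie in $[1,\ell(w)]$; the assertion that such a walk ``can be arranged'' under the length hypothesis is exactly the combinatorial fact at stake, so as written the argument is circular at its crucial point. A clean way to close the gap is to change target: instead of proving period $\gamma-\rho$ on all of $w$, show that the \emph{prefix} $w'$ of length $\ell(w)-\rho$ has periods $\rho$ and $\gamma-\rho$ (for the latter, $w_i=w_{i+\gamma}=w_{i+\gamma-\rho}$ is valid for every $i\le\ell(w)-\gamma=\ell(w')-(\gamma-\rho)$, with no exceptional indices). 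Since $\ell(w')=\ell(w)-\rho\ge\gamma-d=\rho+(\gamma-\rho)-\gcd(\rho,\gamma-\rho)$, the inductive hypothesis gives $w'$ period $d$; and since $d\mid\rho$ and $\ell(w')\ge\rho$, period $d$ then propagates from the first $\rho$ letters to all of $w$ via the period-$\rho$ structure.
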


\begin{cor}\label{ctm}
Let $w$ be a word having initial segment $w_{_1}$ with period $\gamma$ and terminal segment $w_{_2}$ with period $\rho$. If $w_{_1}$ and $w_{_2}$ intersect in a segment $u$ with $\ell(u)\geq \gamma + \rho-gcd(\gamma,\rho)$, then $w$ has period $gcd(\gamma,\rho)$.
\end{cor}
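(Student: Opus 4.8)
The plan is to derive this from the Fine--Wilf theorem (Theorem \ref{tm}) together with a short ``spreading'' argument. First I would note that the intersection segment $u$ inherits both periods: being a segment of $w_{_1}$ it has period $\gamma$, and being a segment of $w_{_2}$ it has period $\rho$. Renaming if necessary so that $\rho\le\gamma$ (both the hypothesis $\ell(u)\ge\gamma+\rho-\gcd(\gamma,\rho)$ and the conclusion are symmetric in the two periods), Theorem \ref{tm} applies to $u$ directly and shows that $u$ has period $d:=\gcd(\gamma,\rho)$.

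Next I would upgrade the periods of $w_{_1}$ and $w_{_2}$ themselves from $\gamma,\rho$ down to $d$, using the following elementary sublemma: if a word $W$ has period $\gamma$ and contains a segment $v$ of length at least $\gamma$ having period $d$ with $d\mid\gamma$, then $W$ has period $d$. To prove it, write the positions of $v$ inside $W$ as an interval $[q,p]$; since $p-q+1\ge\gamma$, every residue class mod $\gamma$ occurs among the positions $q,\dots,q+\gamma-1$, all of which lie in $[q,p]$. So for each position $j$ of $W$ there is a position $j^\ast\in[q,q+\gamma-1]$ with $j^\ast\equiv j\pmod{\gamma}$, and, by repeatedly applying the period $\gamma$ along a chain of $\pm\gamma$-steps (which stays inside $W$), one gets $W_j=W_{j^\ast}$. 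Given $j$ with $j+d$ still a position of $W$, the positions $j^\ast$ and $(j+d)^\ast$ both lie in $v$ and satisfy $j^\ast\equiv(j+d)^\ast\pmod{d}$ because $d\mid\gamma$; since $v$ has period $d$ this forces $W_{j^\ast}=W_{(j+d)^\ast}$, hence $W_j=W_{j+d}$. Applying this with $W=w_{_1}$ (here $\ell(u)\ge\gamma+\rho-d\ge\gamma$, using $\rho\ge d$) and with $W=w_{_2}$ (here $\ell(u)\ge\gamma+\rho-d\ge\rho$) shows that $w_{_1}$ and $w_{_2}$ each have period $d$.

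Finally I would glue the two halves. Writing $w_{_1}$ as the positions $[1,p]$ and $w_{_2}$ as the positions $[q,n]$ of $w$, with $q\le p$ and $u$ occupying $[q,p]$, take any position $i$ with $i+d\le n=\ell(w)$. If $i+d\le p$ then $w_i=w_{i+d}$ by the period $d$ of $w_{_1}$; otherwise $i+d>p$, and since $\ell(u)=p-q+1\ge\gamma+\rho-d\ge d$ one checks that $i<q$ would give $i+d\le q+d-1\le p$, a contradiction, so $i\ge q$ and then $w_i=w_{i+d}$ by the period $d$ of $w_{_2}$. Hence $w$ has period $d=\gcd(\gamma,\rho)$. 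I do not expect a serious obstacle; the only things needing care are the index bookkeeping in the spreading sublemma (keeping the $\gamma$-step chains inside the word) and the three numeric inequalities $\gamma+\rho-\gcd(\gamma,\rho)\ge\gamma,\ \rho,\ \gcd(\gamma,\rho)$, all of which follow from $\gcd(\gamma,\rho)\le\min(\gamma,\rho)$.
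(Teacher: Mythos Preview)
Your proof is correct. The paper states this corollary without proof, treating it as an immediate consequence of the Fine--Wilf theorem (Theorem~\ref{tm}); your argument---applying Fine--Wilf to the overlap $u$ and then spreading the period $\gcd(\gamma,\rho)$ outward through $w_{_1}$ and $w_{_2}$---is exactly the natural way to fill in the details the paper leaves implicit.
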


\begin{lem}\label{nz2}
Suppose that $W\in S^*$ is a cyclically reduced word of the form
$x_1V_1y_1V_1^{-1}=z_0z_1\cdots z_{2k-1}$ for some letters $x_1,y_1$ and some word $V_1$, where $\ell(W)=2k$. Suppose also that $W$ has a cyclic permutation
of the form $z_jz_{j+1}\cdots z_{2k-1}z_0\cdots z_{j-1}=x_2V_2y_2V_2^{-1}$, for some letters $x_2,y_2$ and some word $V_2$, where $j \not\equiv 0$ mod $k$.  Then one of the following holds:
\begin{enumerate}
\item $\{x_1,y_1\}=\{x_2,y_2\}$ and 
$$ W\equiv  \prod_{j=1}^s [x_1^{\alpha(j)}V_3y_1^{\beta(j)}V_3^{-1}]$$ for some odd integer $s>1$ and some word $V_3$, with $\alpha(j),\beta(j)=\pm 1$ for each $j$.
\item $y_i=x_i^{-1}$ for $i=1,2$, and 
$$ W\equiv \prod_{j=1}^s [x_1^{\alpha(j)}V_3x_2^{\beta(j)}V_3^{-1}]$$ for some even integer $s>0$ and some word $V_3$, with $\alpha(j),\beta(j)=\pm 1$ for each $j$.
\end{enumerate}
\end{lem}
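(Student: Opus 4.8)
The plan is to translate everything into statements about periods and then feed them into the combinatorial machinery of Theorem~\ref{tm} and Corollary~\ref{ctm}. Write $W\equiv x_1V_1y_1V_1^{-1}$ with $\ell(W)=2k$, so $\ell(V_1)=k-1$. The key structural observation is that the word $W$, read as $z_0\cdots z_{2k-1}$, has two ``palindrome-like'' symmetries: one coming from the $V_1,V_1^{-1}$ block in the given factorisation, and one coming from the $V_2,V_2^{-1}$ block in the cyclically permuted factorisation starting at position $j$. Each of these forces a period on a long segment of (a suitable cyclic rotation of) $W$: indeed $V_1 = z_1\cdots z_{k-1}$ and $V_1^{-1} = z_{k+1}\cdots z_{2k-1}$ means $z_i^{-1}=z_{2k-i}$ for $1\le i\le k-1$, and similarly for $V_2$ relative to the index $j$. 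First I would record these index identities precisely and note that combining the two reflections produces a translation, i.e. a genuine period: composing the reflection $i\mapsto -i$ (about $0$, up to the $\pm1$ letters) with the reflection $i\mapsto 2j-i$ (about $j$) gives $i\mapsto i+2j$ roughly, so $W$ (cyclically) has period $\gcd(2j,2k)=2\gcd(j,k)$, and since $j\not\equiv 0$ mod $k$ this is a proper period $2d$ with $d=\gcd(j,k)<k$.

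Next I would exploit this period $2d$ to write $W$ as a product of $s=k/d$ blocks, each of length $2d$, all cyclic rotations of one another. The block structure, together with the fact that the two reflections fix (up to inversion) the letters at positions $0,k$ and $j,j+k$, pins down what each block looks like: it is of the form $x^{\alpha}V_3y^{\beta}V_3^{-1}$ where $V_3$ is the length-$(d-1)$ word making up the ``interior'' of a block and $x,y$ are the two distinguished letters. The exponents $\alpha(j),\beta(j)=\pm1$ appear because when the period wraps a $V$-block onto a $V^{-1}$-block the distinguished letter can get inverted; tracking the reflections carefully tells us exactly which sign occurs in which block. I would then split into the two cases according to whether the two reflection centres $\{0,k\}$ and $\{j,j+k\}$ are ``aligned the same way'' or ``oppositely'': in the first case one gets $\{x_1,y_1\}=\{x_2,y_2\}$ and $s$ must be odd (an even $s$ would make $W$ a proper power in a way that contradicts cyclic reducedness or the intersection hypothesis), giving conclusion (1); in the second case the only way the two symmetries are compatible is if $y_i=x_i^{-1}$, forcing the block to be $x^{\alpha}V_3x_2^{\beta}V_3^{-1}$ and $s$ even, giving conclusion (2).

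The main obstacle will be the bookkeeping in the step that converts ``two overlapping reflection symmetries'' into ``one period'', and in particular making sure the overlap of the two symmetric segments is long enough to invoke the Fine--Wilf theorem (Theorem~\ref{tm}) / Corollary~\ref{ctm} rather than merely getting a period on a short subword. Here the hypothesis $\ell(W)=2k$ with the factorisation having $V$-blocks of length $k-1$ is exactly what guarantees the two reflected halves overlap in a segment of length comparable to $k$, and the condition $j\not\equiv 0$ mod $k$ is what makes the resulting period proper; one has to check the inequality $\ell(u)\ge\gamma+\rho-\gcd(\gamma,\rho)$ holds with $\gamma,\rho$ the two relevant ``reflection periods''. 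A secondary subtlety is the parity claims ($s$ odd in case (1), $s$ even in case (2)) and the correct placement of the $\pm1$ exponents: these require examining how the period permutation acts on the two-element set of distinguished positions within each block, which is a finite check once the period is in hand but is easy to get wrong. Cyclic reducedness of $W$ is used throughout to rule out degenerate cancellations and to exclude the case $s$ even in (1) (respectively $s$ odd in (2)).
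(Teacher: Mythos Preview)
Your skeleton matches the paper's argument: the two factorisations give reflection identities $z_i=z_{-i}^{-1}$ (for $i\not\equiv 0$ mod $k$) and $z_i=z_{2j-i}^{-1}$ (for $i\not\equiv j$ mod $k$), composing them gives the translation $z_i=z_{i+2m}$ with $m=\gcd(j,k)$ at every non-special index, and this yields the block decomposition with $V_3=z_1\cdots z_{m-1}$ and $s=k/m$. Two points, however, need correcting.

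First, Fine--Wilf is a red herring. You already noted that the composition of the two reflections is literally a translation; that \emph{is} the period, valid exactly at all $i\not\equiv 0$ mod $m$. There is no ``overlap of segments'' to measure and no inequality $\ell(u)\ge\gamma+\rho-\gcd(\gamma,\rho)$ to verify. The paper simply writes $z_i=z_{2j-i}^{-1}=z_{i-2j}$ together with $z_i=z_{i+2k}$, hence $z_i=z_{i+2m}$, and is done with that step.

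Second, your parity argument is wrong. Neither case makes $W$ a proper power, and cyclic reducedness plays no role in distinguishing (1) from (2). What actually happens is this: follow the chain $z_0=z_{2j}^{-1}=z_{2k-2j}=\cdots$, alternately applying the two reflections, until you hit one of the special indices $j,k,j+k$. Parity of subscripts forces the chain to terminate at $z_{j+k}$ when $s$ is odd (after replacing $j$ by $j+k$ if necessary so that $j/m$ is odd), and the chain has length $s-1$, an even number of sign-flips, so $x_1=z_0=z_{j+k}=y_2$; tracking the rest gives $\{x_1,y_1\}=\{x_2,y_2\}$ and $\xi_t=x_1^{\pm1}$, $\eta_t=y_1^{\pm1}$. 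When $s$ is even the chain terminates at $z_k$ after $s-1$ sign-flips, an odd number, so $x_1=z_0=z_k^{-1}=y_1^{-1}$, and similarly $x_2=y_2^{-1}$. That is the source of the dichotomy, not anything about reducedness.
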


\begin{proof}
Write $x_1V_1y_1V_1^{-1}=z_0z_1\cdots z_{2k-1}$ where $\ell(W)=2k$.  Then $x_2V_2y_2V_2^{-1}$ has the form
$z_jz_{j+1}\cdots z_{2k-1}z_0\cdots z_{j-1}$ for some $j\in\{1,\dots,2k-1\}$.  Thus $z_i=z_{2k-i}^{-1}$ unless $i\equiv 0$ mod $k$, and $z_i=z_{2j-i}^{-1}$ unless $i\equiv j$ mod $k$.  Let $m=\gcd(j,k)$ and let $V_3=z_1z_2\cdots z_{m-1}$.  We interpret all subscripts modulo $2k$.  For
$i\not\equiv 0$ mod $m$, we have $z_i=z_{2j-i}^{-1}=z_{i-2j}$
and also $z_i=z_{i+2k}$, and so $z_i=z_{i+2m}$.  It follows that $x_1V_1y_1V_1^{-1}=\prod_{t=1}^s [\xi_tV_3\eta_tV_3^{-1}]$ for some letters $\xi_t,\eta_t$, where $s=k/m$.  By hypothesis $j\not\equiv 0$ mod $k$, and so $s>1$.

Suppose first that $s$ is odd.  Replacing $j$ by $k+j$ if necessary, we may assume that $j/m$ is also odd.  We have a chain of equalities $z_0=z_{2j}^{-1}=z_{2k-2j}=\cdots$
that continues until it reaches $z_d^{\pm 1}$ for some subscript $d\in\{j,k,j+k\}$.  Since the equalities link 
letters with subscripts of the same parity, we must have $d=j+k$. Moreover, every $z_e$ with $e\equiv 0$ mod $2m$ appears in the chain.  There are precisely $s$ such letters, so an even number of equalities, and hence $\xi_1=x_1=z_0=z_{j+k}=y_2$ and $\xi_t=x_1^{\pm 1}=y_2^{\pm 1}$ for each $t$.  By a similar argument $\eta_t=y_1^{\pm1}=x_2^{\pm1}$ for each $t$.

Now suppose that $s$ is even.  Then $j/m$ is odd.  Arguing as above, we have a chain of $s-1$ equalities $z_0=z_{2j}^{-1}=\cdots$, which must end with $z_k^{-1}$, and a similar chain of equalities equating $z_j$ with $z_{j+k}^{-1}$.  Hence in this case $\xi_t=x_1^{\pm1}=y_1^{\mp1}$ for each $t$, and $\eta_t=x_2^{\pm1}=y_2^{\mp1}$ for each $t$.
\end{proof}

\section{Refinements}\label{sec3}

Let $R$ be a cyclically reduced word of length at least $2$ in the free product $G_{_1} * G_{_2}$. As mentioned in the introduction, we are interested in the case where $R$ is contained in the subgroup $A*B$ where $A$ and $B$ are cyclic subgroups of conjugates of $G_{_1}$ or $G_{_2}$. Let $N(R^n)$ denote the normal closure of $R^n$ in $G_{_1} * G_{_2}$ with $n\geq2$. Then the group of interest is the following:
\[G=\dfrac{(G_{_1} * G_{_2})}{N(R^n)}\]
If $S$ and $T$ are the generators of $A$ and $B$ respectively, then we can construct a generalised triangle group $H=\<~x,~y~|~x^p,~y^q,~R'(x,y)^n~\>$ where $R'(S,T)$ is identically equal to $R$ in $G_{_1} * G_{_2}$. We can then realise $G$ as the push-out in Figure \ref{push-out}. We will call the set $\lbrace S, T\rbrace$ or Figure \ref{push-out} the choice of \textit{generalised triangle group description} for $G$
\begin{figure}[h!]
\begin{tikzpicture}[>=latex]
\centering
\node (w) at (0,0) {\(A*B\)};
\node (x) at (0,-2) {\(G_{_1} * G_{_2}\)};
\node (y) at (2,0) {\(H\)};
\node (z) at (2,-2) {\(G\)};
\draw[->] (w) -- (y);
\draw[->] (w) -- (x);
\draw[->] (x) -- (z);
\draw[->] (y) -- (z);
\end{tikzpicture}
\caption{\textit{Push-out diagram.}}\label{push-out}
\end{figure}

\medskip
The subgroup $A*B$ may not be unique amongst all two-generator subgroups containing $R$. If $A'*B'$ is another two-generator subgroup of $G_{_1} * G_{_2}$ with suitable generating set $\lbrace S', T'\rbrace$ and $A*B\subseteq A'*B'$, then we can write $S$, $T$ and $R'(S,T)$ as words in this new generating set. In general we have that $\ell(S')+\ell(T')\leq \ell(S)+\ell(T)$ and  $\ell(R')\leq\ell(R'')$ where $R\equiv R'(S,T)\equiv R''(S',T')$. (Note that the lengths here are in terms of the new generating set). If any of the two inequalities is strict, we say that the generalised triangle group description given by $R''$ is a \textit{refinement} of the one given by $R'$.

\medskip
Let  $p'$ and $q'$ be the  orders of $S'$ and $T'$ respectively. Then we have that the group $H'=\<~x',~y'~|~x^{p'},~y^{q'},~R''(x,y)^n~\>$ and the refinement gives a commutative diagram as in Figure \ref{Double push-out}, in which both squares are push-outs.
\begin{figure}[h!]
\begin{tikzpicture}[>=latex]
\centering
\node (a) at (0,0) {\(C_{_{p'}}*C_{_{q'}}\)};
\node (b) at (2,0) {\(C_{_p}*C_{_q}\)};
\node (c) at (4,0) {\(G_{_1} * G_{_2}\)};
\node (d) at (0,-2) {\(H'\)};
\node (e) at (2,-2) {\(H\)};
\node (f) at (4,-2) {\(G\)};
\draw[->] (a) -- (b);
\draw[->] (b) -- (c);
\draw[->] (a) -- (d);
\draw[->] (d) -- (e);
\draw[->] (e) -- (f);
\draw[->] (c) -- (f);
\draw[->] (b) -- (e);
\end{tikzpicture}
\caption{\textit{Double push-out diagram.}}
\label{Double push-out}
\end{figure}
A generalised triangle group description for $G$ is said to be \textit{maximal} if no refinement is possible. In other words, $A*B$ is maximal amongst all two-generator subgroups of $G_{_1} * G_{_2}$ containing $R$. It follows from the inequality condition that maximal refinements always exist (but not necessarily unique). From now on we will assume that we are working with maximal refinement.

We next note a useful consequence in this context of Lemma \ref{nz2}.

\begin{lem}\label{refine}
Suppose that, in the above, the generators $S,T$ of $A,B$
have the forms $S=a$, $T=UbU^{-1}$ for some letters $a,b$ and some word $U$. Suppose that $(a,b)$ is an admissible pair.

If there are integers $\alpha,\beta,\gamma,\delta$ such that $a^\alpha Ub^\beta U^{-1}$ and $a^\gamma Ub^\delta U^{-1}$ are proper cyclic conjugates in $G_{_1}*G_{_2}$,
then a refinement is possible.
\end{lem}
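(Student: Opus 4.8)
The plan is to reduce the statement to an application of Lemma~\ref{nz2}. First I would normalise the data. If the word $a^\alpha Ub^\beta U^{-1}$ is not cyclically reduced in $G_{_1}*G_{_2}$, then any cancellation occurs where the letter $a^\alpha$ meets $U$ or where $b^\beta$ meets $U$ (including the cyclic interface), and it can be pushed into the conjugator; after conjugating $G_{_1}*G_{_2}$ suitably — which leaves $G$ unchanged — this replaces $U$ by a strictly shorter word and hence already produces a refinement. So we may assume that $W_1:=a^\alpha Ub^\beta U^{-1}$ and $W_2:=a^\gamma Ub^\delta U^{-1}$ are cyclically reduced, of the respective forms $x_1V_1y_1V_1^{-1}$ and $x_2V_2y_2V_2^{-1}$ with $x_1=a^\alpha$, $y_1=b^\beta$, $x_2=a^\gamma$, $y_2=b^\delta$ and $V_1\equiv V_2\equiv U$. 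If any of these four ``outer'' letters were trivial, the cyclic reduction would have length at most $1$ and could have no proper cyclic permutation, so all four are nontrivial and $\ell(W_1)=\ell(W_2)=:2k$. Since $W_1$ and $W_2$ are conjugate in $G_{_1}*G_{_2}$, are both cyclically reduced, and are distinct (being \emph{proper} cyclic conjugates), $W_2$ is a cyclic permutation of $W_1$ by some shift $j$ with $j\not\equiv 0\bmod 2k$.

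I would then split on whether $j\equiv 0\bmod k$. If $j\not\equiv 0\bmod k$, Lemma~\ref{nz2} applies. In its conclusion~(1), $\{a^\alpha,b^\beta\}=\{x_1,y_1\}=\{x_2,y_2\}=\{a^\gamma,b^\delta\}$; the option $a^\alpha=a^\gamma$, $b^\beta=b^\delta$ is ruled out because it would force $W_1\equiv W_2$, so $a^\alpha=b^\delta$. In its conclusion~(2), $b^\beta=y_1=x_1^{-1}=a^{-\alpha}$. If instead $j\equiv k\bmod 2k$, then cyclically permuting $x_1V_1y_1V_1^{-1}$ through $k$ letters gives $y_1V_1^{-1}x_1V_1$, and comparing this letter-by-letter with $W_2\equiv a^\gamma Ub^\delta U^{-1}$ forces $b^\beta=a^\gamma$, $a^\alpha=b^\delta$ and $U\equiv U^{-1}$. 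In every case there is a nontrivial element of $G_{_1}*G_{_2}$ that is simultaneously a power of $a$ and a power of $b$, so $a$ and $b$ lie in a common free factor and $\langle a\rangle\cap\langle b\rangle\neq 1$; admissibility of $(a,b)$ then makes $\langle a,b\rangle=\langle c\rangle$ cyclic, for some letter $c$ of that factor. Moreover, in the subcase $j\equiv k\bmod 2k$ we have $U\equiv U^{-1}$, and since $U$ is reduced and (as $k\ge 2$ here) nonempty, this means $U\equiv PtP^{-1}$ for some word $P$ and some letter $t$ of order $2$.

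To finish I would exhibit a refinement. If $b\neq a^{\pm1}$, so that $\langle a\rangle$ or $\langle b\rangle$ is a proper subgroup of $\langle c\rangle$, then $A*B=\langle a\rangle*\langle UbU^{-1}\rangle\subseteq\langle c\rangle*\langle UcU^{-1}\rangle$; the generating sets $\{a,UbU^{-1}\}$ and $\{c,UcU^{-1}\}$ have the same total length, but $R$ is strictly longer as a word in the more primitive set $\{c,UcU^{-1}\}$ (here one uses that $R$ genuinely involves both generators of $A*B$, part of the standing hypothesis), so this is a refinement. If $b=a^{\pm1}$ — which, by the analysis above, occurs only in the subcase $U\equiv PtP^{-1}$ — conjugating the pair $(a,\ UbU^{-1})=(a,\ PtP^{-1}bPtP^{-1})$ by $P$ turns $a$ into $P^{-1}aP$ and $UbU^{-1}$ into $t(P^{-1}bP)t=t(P^{-1}aP)^{\pm1}t$, so that up to conjugacy $A*B$ sits inside the internal free product $\langle P^{-1}aP\rangle*\langle t\rangle$, whose two generators have total length $\ell(U)+1<2\ell(U)+2$; this too is a refinement.

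I expect the main obstacle to be the bookkeeping around the normalisation step — showing precisely that if $a^\alpha Ub^\beta U^{-1}$ fails to be cyclically reduced in the form $x_1V_1y_1V_1^{-1}$ with $V_1\equiv U$, then a refinement already exists, so that the clean case is genuinely without loss of generality — together with the routine verifications that $\langle c\rangle*\langle UcU^{-1}\rangle$ and $\langle P^{-1}aP\rangle*\langle t\rangle$ really are internal free products (which comes down to $c$ and $t$ being cyclically reduced single letters and to $U$ beginning and ending with letters of the factor not containing $a$ and $b$).
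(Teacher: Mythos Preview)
Your argument has a genuine gap in the handling of conclusion~(1) of Lemma~\ref{nz2}. You claim that the option $a^\alpha=a^\gamma$, $b^\beta=b^\delta$ is ruled out ``because it would force $W_1\equiv W_2$'', but this is not excluded by the hypothesis: in the paper, a \emph{proper} cyclic permutation means a permutation by a nontrivial shift, not a permutation yielding a distinct word (see Section~\ref{sec2}). It is perfectly possible that $W_1\equiv W_2$ while the cyclic shift $j$ is nonzero --- the cyclic word is then periodic. In that situation there is no common nontrivial power of $a$ and $b$ to extract, and your entire downstream construction, which rests on obtaining such a common power, collapses. The paper's proof avoids this by using the \emph{other} part of conclusion~(1): the decomposition $W\equiv\prod_{t=1}^s[x_1^{\alpha(t)}V_3y_1^{\beta(t)}V_3^{-1}]$ with $s>1$ and $\ell(V_3)<\ell(U)$. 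From this one reads off that $UbU^{-1}\in\langle a\rangle*\langle V_3bV_3^{-1}\rangle$, and the strictly shorter conjugator $V_3$ gives the refinement directly, with no appeal to admissibility at all.

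A second, related problem: even where you \emph{do} obtain a common root $c$ (in conclusion~(2) and in the $j\equiv k$ case), your proposed refinement $\langle c\rangle*\langle UcU^{-1}\rangle$ does not obviously satisfy the paper's definition. The generators $c$ and $UcU^{-1}$ have the same $G_{_1}*G_{_2}$-lengths as $a$ and $UbU^{-1}$, and the free-product length of $R$ in $\langle c\rangle*\langle UcU^{-1}\rangle$ is also unchanged (each syllable $a^{\alpha_i}$ becomes the single syllable $c^{m\alpha_i}$). So neither inequality in the definition of refinement is strict. The paper again sidesteps this by passing to a genuinely shorter conjugator: in the $j\equiv k$ case one has $U\equiv VxV^{-1}$ and uses $\langle c\rangle*\langle VxV^{-1}\rangle$; in conclusion~(2) one uses $\langle c\rangle*\langle V_3cV_3^{-1}\rangle$ with $V_3$ coming from Lemma~\ref{nz2}. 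In both cases the total generator length strictly drops.
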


\begin{proof}
We may apply Lemma \ref{nz2} to $a^\alpha Ub^\beta U^{-1}$ and $a^\gamma Ub^\delta U^{-1}$ except in the situation where $a^\alpha Ub^\beta U^{-1}\equiv b^\delta U^{-1}a^\gamma U$. Let us first consider this exceptional situation.  Then in particular $U\equiv U^{-1}$ and so $U$ has the form $VxV^{-1}$ for some word $V$ and some letter $x$ of order $2$. But we also have $a^\alpha=b^\delta$, and so by definition of admissibility $a,b$ have a common root $c$ say.  But then $A*B$ is a proper subgroup of $C*D$, where $C$ and $D$ are the cyclic subgroups of $G_{_1}*G_{_2}$ generated by $c,V$ respectively.  This is a refinement, as required.

Now apply Lemma \ref{nz2} with $x_1=a^\alpha$, $y_1=b^\beta$, $x_2=a^\gamma$, $y_2=b^\delta$, and $V_1=V_2=U$.  Consider
first the case when $s$ is odd in the conclusion of Lemma \ref{nz2}.
In this case, $A*B$ is a proper subgroup of $A*B'$, where $B'$ is the cyclic subgroup generated by $V_3bV_3^{-1}$, so
again we have a refinement.

Finally, suppose that $s$ is even in Lemma \ref{nz2}.  Then
$a^\alpha=b^{-\beta}$, so by admissibility $a,b$ have a common root $c$.  Then
$A*B$ is a proper subgroup of $C*D$, where $C,D$ are the cyclic subgroups generated by $c,V_3cV_3^{-1}$ respectively, so
again we have a refinement.

\end{proof}

\medskip
Before leaving this section, we mention a few important results about generalised triangle groups.

\begin{lem}\label{order}
For an integer $m>1$, a nontrivial element $X\in PSL_{_2}(\mathbb{C})$ has order $m$ if and only if $Tr(X)=2\cos\frac{\delta \pi}{m}$ for some $\delta$ satisfying $\text{gcd}(\delta,m)=1$.
\end{lem}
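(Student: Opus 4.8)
The plan is to lift $X$ to a matrix $\tilde X\in PSL_{_2}(\mathbb{C})$ — rather, to $SL_{_2}(\mathbb{C})$ — bearing in mind that $\Tr(X)$ is then determined only up to sign, and to translate the finite-order condition into a statement about the eigenvalues of $\tilde X$. First I would note that a nontrivial finite-order element $X$ of $PSL_{_2}(\mathbb{C})$ lifts to a finite-order (hence diagonalisable) element $\tilde X$ of $SL_{_2}(\mathbb{C})$, so $\tilde X$ has eigenvalues $\lambda,\lambda^{-1}$ for some root of unity $\lambda=e^{i\theta}$ and $\Tr(\tilde X)=\lambda+\lambda^{-1}=2\cos\theta$. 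Conversely, any matrix of trace $2\cos(\delta\pi/m)$ with $\gcd(\delta,m)=1$ and $m>1$ has $\cos(\delta\pi/m)\neq\pm1$ (since $m\nmid\delta$), so its trace is not $\pm2$ and it is likewise diagonalisable with eigenvalues of the form $e^{\pm i\theta}$; in particular it is not $\pm$ a parabolic matrix, which would give an element of infinite order in $PSL_{_2}(\mathbb{C})$.

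The crucial observation is that, for $\tilde X$ with eigenvalues $\lambda,\lambda^{-1}$, one has $\tilde X^k=\pm I$ exactly when $\lambda^{2k}=1$; hence if $\lambda$ has multiplicative order $N$ then $X$ has order $N/\gcd(N,2)$ in $PSL_{_2}(\mathbb{C})$, so $X$ has order $m$ iff $N=2m$, or $N=m$ with $m$ odd. For the forward direction I would then split on the parity of $N$: if $N=2m$, write $\lambda=e^{\pi i\delta/m}$ with $\gcd(\delta,2m)=1$, so that $\Tr(\tilde X)=2\cos(\delta\pi/m)$ directly; if $N=m$ (so $m$ is odd), write $\lambda=e^{2\pi ij/m}$ with $\gcd(j,m)=1$ and, replacing $\tilde X$ by $-\tilde X$ if needed, obtain $\Tr(X)=\pm2\cos(2\pi j/m)=2\cos(\delta\pi/m)$ with $\delta\in\{2j,\,m-2j\}$, which is coprime to the odd number $m$. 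The converse just reverses this computation: the eigenvalues are $e^{\pm i\delta\pi/m}$, of order $N=2m/\gcd(\delta,2m)$, and one checks $N/\gcd(N,2)=m$ both when $\delta$ is odd ($N=2m$) and when $\delta$ is even with $m$ odd ($N=m$); no other parity combination occurs, since $\gcd(\delta,m)=1$.

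The one genuinely fiddly point will be the bookkeeping forced by the two sign ambiguities — the choice of lift $\tilde X$ versus $-\tilde X$, and the distinction between $\tilde X^m=I$ and $\tilde X^m=-I$ — interacting with the parity case split; everything else is the standard dictionary between semisimple conjugacy classes and traces in $PSL_{_2}(\mathbb{C})$ together with the fact that finite-order elements are elliptic. Alternatively, one could simply cite the corresponding statement in the existing literature on generalised triangle groups and omit the computation entirely.
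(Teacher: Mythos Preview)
Your proof is correct. The paper, however, does not prove this lemma at all: it is stated without proof and then immediately used in the proof of the subsequent Proposition~\ref{prop1}, being treated as a standard elementary fact about $PSL_2(\mathbb{C})$. In that sense your final sentence --- that one could simply cite the existing literature and omit the computation --- is precisely what the paper does (indeed, without even an explicit citation).

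Your argument is nonetheless the right one and handles the only genuinely delicate point, namely the interaction between the sign ambiguity of the lift $\tilde X\mapsto -\tilde X$ and the parity of the order $N$ of the eigenvalue. The case split (\,$N=2m$ versus $N=m$ with $m$ odd\,) and the computation of $N=2m/\gcd(\delta,2m)$ for the converse are both accurate, and your observation that $\gcd(\delta,m)=1$ with $m>1$ forces the trace away from $\pm 2$ correctly rules out the parabolic case.
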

\begin{prop}\label{prop1}
Let $H=\<~x,~y~|~x^p,~y^q,~(x y)^2~\>$ be a triangle group.
If $v(x,y)=x^\alpha y^\beta x^\gamma y^\delta$ is trivial in $H$, with 
$\alpha,\gamma\in \lbrace 1, 2, \ldots , p-1\rbrace$ and $\beta,\delta\in \lbrace 1, 2, \ldots , q-1\rbrace$,
then one of the following holds:
\begin{enumerate}
\item $2\in\{p,q\}$;
\item $\alpha =\beta =\gamma =\delta=1$;
\item $\alpha=\gamma=p-1$ and $\beta=\delta=q-1$.
\end{enumerate}
\end{prop}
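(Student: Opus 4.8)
The plan is to study the statement through a matrix representation of $H$. If $2\in\{p,q\}$ then conclusion (1) holds and there is nothing to do, so I would assume from now on that $p,q\ge 3$. Set $\zeta=e^{i\pi/p}$, $\eta=e^{i\pi/q}$ and put
\[
X=\begin{pmatrix}\zeta&1\\0&\zeta^{-1}\end{pmatrix},\qquad
Y=\begin{pmatrix}\eta&0\\-\zeta\eta-\zeta^{-1}\eta^{-1}&\eta^{-1}\end{pmatrix}.
\]
The eigenvalues of $X,Y$ are $\zeta^{\pm1},\eta^{\pm1}$, so $X^p=Y^q=-I$, and since $\Tr(XY)=\zeta\eta-\zeta\eta-\zeta^{-1}\eta^{-1}+\zeta^{-1}\eta^{-1}=0$ we also have $(XY)^2=-I$; hence $x\mapsto X$, $y\mapsto Y$ extends to a homomorphism $\rho\colon H\to PSL_2(\mathbb{C})$. (Only the homomorphism property is used, so nothing about the geometry of $H$ enters.) Consequently, if $v(x,y)=1$ in $H$, then $X^\alpha Y^\beta X^\gamma Y^\delta=\varepsilon I$ in $SL_2(\mathbb{C})$ for some $\varepsilon\in\{\pm1\}$, and the whole problem becomes one about $2\times2$ matrices.

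Writing $\phi=\pi/p$, $\psi=\pi/q$, I would record that $X^\alpha=\bigl(\begin{smallmatrix}\zeta^\alpha&\sigma_\alpha\\0&\zeta^{-\alpha}\end{smallmatrix}\bigr)$ with $\sigma_\alpha=\sin(\alpha\phi)/\sin\phi>0$, and that $Y^\beta$ has diagonal $(\eta^\beta,\eta^{-\beta})$ and $(2,1)$-entry equal to a $\beta$-independent scalar times $\sigma'_\beta=\sin(\beta\psi)/\sin\psi>0$. Rewriting the relation as $X^\alpha Y^\beta=\varepsilon\,Y^{-\delta}X^{-\gamma}$ and comparing entries, the $(1,2)$-entries give $\sigma_\alpha e^{-i\beta\psi}=-\varepsilon\,\sigma_\gamma e^{-i\delta\psi}$; taking moduli forces $\sigma_\alpha=\sigma_\gamma$, hence $\sin(\alpha\phi)=\sin(\gamma\phi)$, and then $e^{i(\delta-\beta)\psi}=-\varepsilon$. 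Since $|\delta-\beta|\le q-2<q$, the value $\varepsilon=+1$ is impossible, so $\varepsilon=-1$ and $\delta=\beta$. Subtracting the $(1,1)$- and $(2,2)$-entry equations, the terms carrying the off-diagonal entry of $Y^\beta$ cancel (as $\sigma_\alpha=\sigma_\gamma$), leaving $\sin(\alpha\phi+\beta\psi)=\sin(\gamma\phi+\beta\psi)$; combining this with $\sin(\alpha\phi)=\sin(\gamma\phi)$ and the identity $\sin(A+B)-\sin(A-B)=2\cos A\sin B$ (with $\sin(\beta\psi)\ne0$) rules out $\gamma=p-\alpha\ne\alpha$, so $\gamma=\alpha$.

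With $\gamma=\alpha$, $\delta=\beta$, $\varepsilon=-1$ the $(1,1)$-entry equation collapses to
\[
2\cos(\alpha\phi+\beta\psi)=2\cos(\phi+\psi)\,\frac{\sin(\alpha\phi)\sin(\beta\psi)}{\sin\phi\,\sin\psi},
\]
and dividing by $2\sin(\alpha\phi)\sin(\beta\psi)$ and using $\cos(A+B)/(\sin A\sin B)=\cot A\cot B-1$ turns this into the single equation $\cot(\alpha\phi)\cot(\beta\psi)=\cot\phi\cot\psi$. Since $p,q\ge3$, both $\cot\phi$ and $\cot\psi$ are positive, so the right-hand side is positive. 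Because $\cot$ is strictly decreasing on $(0,\pi)$ and $\cot((p-1)\phi)=-\cot\phi$, one has $|\cot(\alpha\phi)|\le\cot\phi$ for every $\alpha\in\{1,\dots,p-1\}$, with equality exactly when $\alpha\in\{1,p-1\}$, and likewise for $\beta$. The product equalling the positive number $\cot\phi\cot\psi$ then forces $\alpha\in\{1,p-1\}$, $\beta\in\{1,q-1\}$ and the two factors to have the same sign, i.e.\ $(\alpha,\beta)=(1,1)$ or $(\alpha,\beta)=(p-1,q-1)$. Together with $\gamma=\alpha$, $\delta=\beta$ this gives precisely conclusions (2) and (3).

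The step I expect to be most delicate is the entry-by-entry analysis in the second paragraph: fixing the sign $\varepsilon=-1$ from the $(1,2)$-entry and then extracting the reductions $\delta=\beta$ and $\gamma=\alpha$ from the $(1,1)$- and $(2,2)$-entries. Once the relation has been reduced to $\cot(\alpha\phi)\cot(\beta\psi)=\cot\phi\cot\psi$, the monotonicity argument that finishes it is routine; a pleasant feature is that the argument runs uniformly, with no need to treat the Euclidean triangle groups $(2,3,6)$ and $(2,4,4)$ (where $\rho$ happens to be reducible) as separate cases.
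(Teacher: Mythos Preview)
Your proof is correct and follows essentially the same route as the paper's: a representation of $H$ into $PSL_2(\mathbb{C})$ by triangular matrices, an entry-by-entry comparison of $X^\alpha Y^\beta=\varepsilon Y^{-\delta}X^{-\gamma}$ to force $\gamma=\alpha$, $\delta=\beta$, $\varepsilon=-1$, and then reduction to a single trigonometric identity. The paper uses the transposed matrices (so compares the $(2,1)$ entry where you compare the $(1,2)$ entry) and ends with the equivalent equation $\tan(\alpha\phi)\tan(\beta\psi)=\tan\phi\tan\psi$ rather than your cotangent version; it then simply asserts the conclusion, whereas you spell out the monotonicity argument. Your observation that only the homomorphism property of $\rho$ is needed is worth making: the paper calls the representation faithful, which fails in the Euclidean cases $(p,q)\in\{(3,6),(4,4),(6,3)\}$ where your $c=-2\cos(\phi+\psi)$ vanishes, but as you note this does not affect the argument.
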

\begin{proof}
 Assume that $p\neq 2\neq q$ and consider the elements\\ $ X=\left( \begin{array}{cc}
e^{\frac{\pi i}{p}} & 0 \\
1 & e^{\frac{-\pi i}{p}} \end{array} \right)$  and $ Y=\left( \begin{array}{cc}
e^{\frac{\pi i}{q}} & t \\
0 & e^{\frac{-\pi i}{q}} \end{array} \right)$ in $PSL_{_2}(\mathbb{C})$.  By Lemma \ref{order}, $X$ and $Y$ have orders $p$ and $q$ respectively in $PSL_{_2}(\mathbb{C})$. If we take $t=-2\cos\left(\frac{\pi}{p}+\frac{\pi}{q}\right)$, then $Tr(X Y)=0$ and hence the map $x\mapsto X$, $y\mapsto Y$ extends to a faithful representation of $H$ in $PSL_{_2}(\mathbb{C})$. Suppose that $X^\alpha Y^\beta =Y^{-\delta} X^{-\gamma}$. By comparing the left lower entries of both sides of the equation we have 
 $\alpha\equiv\pm\gamma$ mod $p$ and
\[\sin \frac{\alpha \pi}{p}e^{\frac{\beta\pi i}{q}}\mp \sin \frac{\gamma \pi}{p}e^{\frac{\delta \pi i}{q}}=0.\] By expanding and solving component-wise, we have that \[\sin \frac{\gamma \pi}{p} \sin \left(\frac{\beta-\delta}{q} \right)\pi =0 .\]  In particular, $\beta=\delta$. Similarly we have 
 $\alpha=\gamma$. 

 Hence $v=(X^\alpha Y^\beta)^2=\pm I$.  By comparing off-diagonal entries, we see that $X^\alpha\neq\pm Y^{-\beta}$,
 so $v\neq +I$.  Hence $v=-I$, and so
 $Tr(X^\alpha Y^\beta)=0$, i.e 
\[2\cos\left(\frac{\alpha}{p}+\frac{\beta}{q}\right)\pi + t\dfrac{\sin\frac{\alpha\pi}{p} \sin\frac{\beta\pi}{q}}{\sin\frac{\pi}{p}\sin\frac{\pi}{q}}=0 .\]
Hence we obtain 
\[\tan\frac{\alpha \pi}{p} \tan\frac{\beta \pi}{q}=\tan\frac{\pi}{p} \tan\frac{\pi}{q}\]
Since $p,q>2$, the last equality  holds if and only if 
 either $\alpha=\beta= 1$ or $\alpha=p-1$ and $\beta=q-1$.
\end{proof}

Another result which will be very useful in the analysis of clique-pictures is the following theorem which goes by the name \textit{Spelling Theorem for generalised triangle groups.}
\begin{thm}\cite{hk}\label{SP}
Let $H=\<~x,~y~|~x^p,~y^q,~W(x,y)^r ~\>$ be a generalised triangle group with $W(x,y)=\prod_{i=1}^k x^{\alpha_{_i}} y^{\beta_{_i}}$, $(k>0, 0<\alpha_{_i}<p, 0<\beta_{_i}<q)$. If $V(x,y)=\prod_{i=1}^l x^{\gamma_{_i}} y^{\delta_{_i}}$, $(l>0, 0<\gamma_{_i}<p, 0<\delta_{_i}<q)$ is trivial in $H$, then $l\geq kr$.
\end{thm}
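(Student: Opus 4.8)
The plan is to prove this diagrammatically, adapting B.~B.~Newman's proof of the Spelling Theorem for ordinary one-relator groups to the one-relator product $H=(G_{_1}*G_{_2})/N(W^r)$, where $G_{_1}=\<x\mid x^p\>$ and $G_{_2}=\<y\mid y^q\>$, in the spirit of the treatment of the Freiheitssatz for general one-relator products in \cite{jh1,jh2}. First I would set the problem up so that Newman's machinery applies. Because $V$ is given as $\prod_{i=1}^l x^{\gamma_i}y^{\delta_i}$ with $0<\gamma_i<p$ and $0<\delta_i<q$, it is a cyclically reduced word of free-product length $2l$ in $G_{_1}*G_{_2}$, and likewise $W^r$ is cyclically reduced of free-product length $2kr$; in particular $V\neq 1$ in $G_{_1}*G_{_2}$, so $V=1$ in $H$ means precisely that $V$ is a nontrivial element of the normal closure $N(W^r)$. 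Replacing $W$ by its primitive root if $W$ is itself a proper power (this changes neither $W^r$ nor the product $kr$), we may assume $W$ is not a proper power. Now let $\Gamma$ be a reduced picture over $H$ on the disc, with boundary label $V$, having as few discs as possible. Minimality forces $\Gamma$ to be nonempty (otherwise $V=1$ in $G_{_1}*G_{_2}$) and ensures that no two discs of $\Gamma$ cancel along a common arc. It then suffices to show that the boundary label of any such $\Gamma$ has free-product length at least $2kr$, i.e. that $l\ge kr$.

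Suppose first that $\Gamma$ has a single disc $v$. Then every arc incident to $v$ runs to $\partial\Gamma$: an arc returning to $v$ would cut off a disc-free subregion bounded by a nonempty block of consecutive syllables of $W^{\pm r}$, which is nontrivial in $G_{_1}*G_{_2}$, a contradiction. Hence $V$ is a cyclic permutation of $W^{\pm r}$ and $l=kr$, and this is the unique case in which the inequality is tight. The substance of the proof is the case of at least two discs, and the key step is a two-sided Greendlinger-type lemma: since $W^r$ is a proper $r$-th power with $r\ge 2$ and $W$ is aperiodic, a reduced picture over $H$ with at least two discs must contain two distinct discs $v_1,v_2$ such that, for each $i$, the part of $\partial\Gamma$ lying on $v_i$ is labelled by a subword of a cyclic permutation of $W^{\pm r}$ of free-product length at least $2k(r-1)$ (it omits at most one period $W$ of $W^r$); since $v_1\neq v_2$ these two parts of $\partial\Gamma$ contribute disjoint syllables. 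Granting the lemma, $2l=\ell(V)\ge 4k(r-1)\ge 2kr$, the last inequality being equivalent to $r\ge 2$, and so $l\ge kr$.

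The hard part is this Greendlinger-type lemma, that is, carrying Newman's combinatorial curvature argument over from the free-group setting to that of one-relator products. The proper-power hypothesis plays the role that metric small cancellation plays in the classical Greendlinger lemma: a bond shared by two discs of a reduced picture is labelled by a common factor of a cyclic conjugate of $W^{r}$ and one of $W^{-r}$, which is short compared with $|W^r|=2kr$ because $W$ is aperiodic -- here one can bound self-overlaps by the Fine--Wilf theorem, Theorem \ref{tm} -- and a picture with fewest discs is then forced, by a combinatorial curvature count on the disc, to have two discs that are exposed on $\partial\Gamma$ along all but at most one period. The genuine technical difficulties are the degeneracies with no analogue over a free group: long self-overlaps of $W$ even when $W$ is aperiodic, the fusion of consecutive syllables lying in a common free factor once the $x^p$- and $y^q$-cells have been collapsed, and involutions appearing when $p$ or $q$ equals $2$; each of these must be controlled so that the exposure estimate does not drop below $2k(r-1)$. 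A quite different, diagram-free route would use essential representations of $H$ in $PSL_{_2}(\mathbb{C})$ as in Proposition \ref{prop1}: the trace of the image of $W$ is a polynomial of degree $k$ in the trace parameter, yielding $k$ essential representations on which the image of $W$ has order $r$, after which one would show that the image of $V$ cannot be trivial on all of them unless $l\ge kr$; this argument, however, applies only outside the finitely many exceptional generalised triangle groups and so would still require the diagrammatic treatment in those cases, which makes the picture argument the more uniform choice.
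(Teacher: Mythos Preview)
The paper does not supply a proof of this theorem: it is quoted from \cite{hk} and used as a black box, so there is no in-paper argument to compare your proposal against.

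On its own merits, your outline has the right shape but a genuine gap. The arithmetic at the end is fine: if two distinct discs each expose at least $2k(r-1)$ letters on $\partial\Gamma$, then indeed $2l\ge 4k(r-1)\ge 2kr$. The gap is exactly where you say it is, in the Greendlinger--Newman lemma itself. Invoking Fine--Wilf (Theorem~\ref{tm}) bounds a piece between non-cancelling $W^{\pm r}$-discs by roughly one period $2k$, which yields only a $C(r)$-type condition; that is far weaker than the ``all but one period'' exposure estimate you need, and for $r\in\{2,3\}$ it gives essentially nothing. Newman's original argument in the free-group case uses more than a crude piece bound---it exploits the exact structure of proper-power overlaps via a staircase/layer argument---and the free-product phenomena you list (syllable fusion at region boundaries, and letters of order $2$ when $p$ or $q$ equals $2$, where $W^r$ and $W^{-r}$ can even coincide as cyclic words) genuinely obstruct a direct translation. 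The papers \cite{jh1,jh2} you invoke handle only fourth and higher powers over arbitrary factors, so they do not close the gap for small $r$ either. As written, the proposal is a plan with the central lemma left open rather than a proof; your alternative $PSL_2(\mathbb{C})$ route is in fact closer in spirit to how this result is usually established, and is worth pursuing in preference to the picture argument.
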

\section{Pictures and Clique-pictures}\label{sec4}
Pictures have been used widely to prove results for one relator groups. In this section we recall only the basic ideas about pictures and clique-pictures as can be found in \cite{hs}. 

\subsection{Pictures}
A \textit{picture} $P$ over $G$ on an oriented surface $\Sigma$ is made up of the following data:
\begin{itemize}
  \item a finite collection of pairwise closed discs in the interior of $\Sigma$ called \textit{vertices}
  
  \item a finite collection of disjoint closed arcs called \textit{edges}, each of which is either: 
  
  \begin{itemize}
  \item a simple closed arc in the interior of $\Sigma$ meeting no vertex of $P$,
  \item a simple arc joining two vertices (possibly same one) on $P$,
  \item a simple arc joining a vertex  to the boundary $\partial \Sigma$ of $\Sigma$,
  \item a simple arc joining $\partial \Sigma$ to $\partial \Sigma$,
  \end{itemize}
  
  \item a collection of \textit{labels} (i.e words in $G_{_1}\cup  G_{_2}$), one for each corner of each \textit{region} (i.e connected component of the complement in $\Sigma$ of union  of vertices and arcs of $P$) at a vertex and one along each component of the intersection of the region with $\partial \Sigma$. For each vertex, the label around it spells out the word $R^{\pm n}$ (up to cyclic permutation) in the clockwise order as a cyclically reduced word in $G_{_1} * G_{_2}$. We call a vertex \textit{positive} or \textit{negative} depending on whether the label around it is $R^{ n}$ or $R^{-n}$ respectively.
\end{itemize}

For us $\Sigma$ will either be the $2$-sphere $S^2$ or $2$-disc $D^2$. A picture on $\Sigma$ is called \textit{spherical} if either $\Sigma=S^2$ or $\Sigma=D^2$ but with no arcs connected to $\partial {D^2}$. If 
$P$ is not spherical, $\partial {D^2}$ is one of the boundary components of a non-simply connected region (provided, of course, that $P$ contains at least one vertex or arc), which is called the \textit{exterior}. All other regions are called \textit{interior}.

\medskip
We shall be interested only in \textit{connected} pictures (to be defined later). This implies that all  interior
regions $\bigtriangleup$ of $P$ are simply-connected i.e topological discs. Just as in the case of vertices, the label around each region  gives a word which is required to be trivial in $G_{_1}$ or $G_{_2}$. Hence it makes sense to talk of $G_{_1}-$regions or $G_{_2}-$regions. Each arc is required to separate a $G_{_1}-$region from a $G_{_2}-$region. This is compatible with the alignment of regions around a vertex, where the labels spell a cyclically reduced word, so must come alternately from $G_{_1}$ and $G_{_2}$. 

\medskip
Likewise  a vertex is called \textit{exterior} if it is possible to join it to the \textit{exterior} region by some arc without intersecting any other arc, and \textit{interior} otherwise. For simplicity we will indeed assume from this point that our $\Sigma$ is either $S^2$ or $D^2$. It follows that reading the label round any \textit{interior} region spells a word which is trivial in $G_{_1}$ or $G_{_2}$. The \textit{boundary label} of $P$ on $D^2$ is a word obtained by reading the \textit{labels} on $\partial D^2$ in an \textit{anticlockwise} direction. This word (which we may be assumed to cyclically reduced in $G_{_1} * G_{_2}$) represents an identity element in $G$. In the case where $P$ is spherical, the \textit{boundary label} is an element in $G_{_1}$ or $G_{_2}$ determined by other labels in the \textit{exterior} region. 

\medskip
Two distinct vertices of a picture are said to  \emph{cancel} along an arc $e$ if they are joined by $e$ and if their labels, read from the endpoints of $e$, are mutually inverse
words in $G_{_1} * G_{_2}$. Such vertices can be removed from a picture via a sequence of \textit{bridge moves} (see Figure \ref{bridge} and  \cite{dh2} for more details), followed by deletion of a \textit{dipole} without changing the boundary label. A \textit{dipole} is a connected spherical picture containing precisely two vertices, does not meet $\partial \Sigma$, and none of its interior regions contain other components of $P$. This gives an alternative picture with the same boundary label and two fewer vertices. 

\begin{figure}[h!]
\includegraphics[scale=0.095]{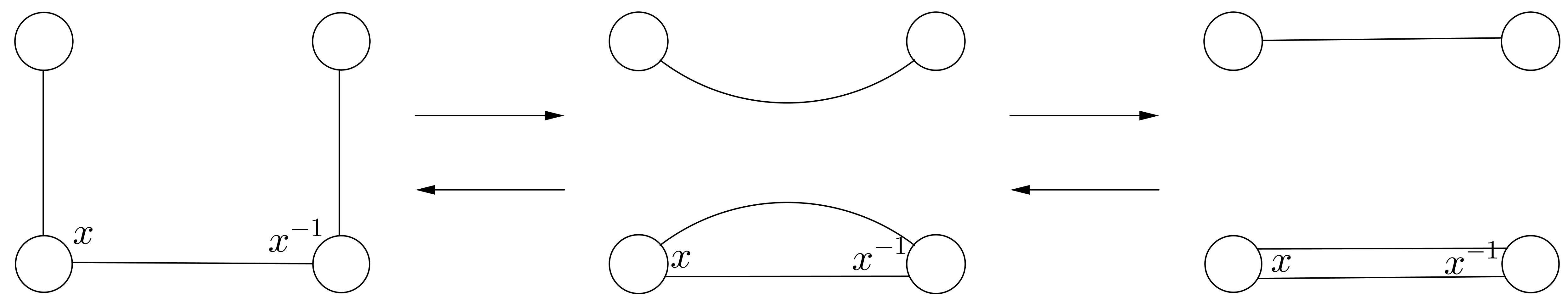}
\caption{\textit{Diagram showing bridge-move.}}
\label{bridge}
\end{figure}
\medskip
We say that a picture $P$ is \textit{reduced} if it cannot be altered by bridge moves to a picture with a pair of cancelling vertices, \textit{minimal} if it is non-empty and has the minimum number of vertices amongst all pictures over $G$, and \textit{efficient} if it has the minimum number of vertices amongst all pictures over $G$ with particular boundary label. Any cyclically reduced word in $G_{_1} * G_{_2}$ representing the identity element of $G$ occurs as the boundary label of some reduced picture on $D^2$. A picture is \textit{connected} if the union of its vertices and arcs is connected. In particular, no arc of a connected picture is a closed arc or joins two points of $\partial \Sigma$, unless the picture consists only of that arc.  

\subsection{Clique-pictures}

\medskip
Clique-pictures appeared in \cite{hs} and are modelled on generalised triangle groups. For the rest of this paper, $$G=\dfrac{(G_{_1} * G_{_2})}{N(R^n)}$$ is a one-relator product induced by the triangle group $$H=\langle x, y | x^p, y^q, R'(x,y)^n\rangle.$$ In other words $R$ is a word in $\lbrace a, UbU^{-1}\rbrace$ for some word $U\in G_{_1} * G_{_2}$ and letters $a$ and $b$ in $G_{_1}\cup G_{_2}$ with orders $p$ and $q$ respectively.

\medskip
If $u$ and $ v$ are two vertices in a picture over $G$ that are joined by an arc $e$, then we may use the endpoints of $e$ as the starting points for reading the labels $L_{_u}$ and $L_{_v}$ of
$u$ and $ v$ respectively. In each case the label is a cyclic permutation of $R'(a, UbU^{-1})^{\pm n}$. We may assume, without loss of generality, that the word $R'(x, y)$ begins with the
letter $x$. Choose a cyclic permutation $R^{*}(x, y)$ of $R'(x, y)^{-1}$ that also starts with $x$.

\medskip
Each of $L_{_u}$ and $L_{_v}^{-1}$ is a cyclic conjugate of $R'(a, UbU^{-1})^n$ or $R^{*}(a, UbU^{-1})^n$, say $L_{_u} = Y Z$, where $ZY = R'(a, UbU^{-1})^n$ or $ZY =R^{*}(a, UbU^{-1})^n$ and 
$L_{_v}^{-1} = Y'Z'$, where $Z'Y' = R'(a, UbU^{-1})^n$ or $Z'Y' =R^{*}(a, UbU^{-1})^n$.

\medskip
We define $u \sim v$ if and only if $\ell(Y') \equiv \ell(Y ) $mod$  l=\ell(aUbU^{-1})$. 
 It follows immediately from
Lemma \ref{refine} 
that $\ell(Y')$ and $\ell(Y )$ are unique modulo  $l$, and so the relation $\sim$ is well-defined. The point of the relation $\sim$ is that, when $u\sim v$, then the $2$-vertex sub-picture consisting of $u$ and $v$, joined by $e$ and any arcs \textit{parallel} (see Remark \ref{rmk} for definition) to $e$, has boundary label a word in $\lbrace a, UbU^{-1}\rbrace$, after cyclic reduction and cyclic permutation. (Indeed, the cyclic reduction of the label can be achieved by performing bridge moves to make the number of edges parallel to $e$ be a multiple of  $l/2$.) Now let $\approx$ denote the transitive, reflexive closure of $\sim$. Then $\approx$ is an equivalence relation on vertices. After a sequence of bridge moves, we may assume that arcs joining equivalent vertices do so in parallel classes each containing a multiple of $l/2$ arcs. Define a \textit{clique} to be the sub-picture consisting of any $\approx$-equivalence class of vertices, together with all arcs between vertices in that $\approx$-class (assumed to occur in parallel classes of multiples of $l/2$ arcs), and all regions that are enclosed entirely by such arcs.

\begin{defn}
Let $G$ be a one-relator product induced from a generalised triangle
group as above, and let $P$ be a picture on a surface $\Sigma$, such that every clique of $P$
is simply-connected. Then the \textit{clique-quotient} of $P$ is the picture  formed from  $P$
by contracting each clique to a point, and regarding it as a vertex.
A \textit{clique-picture} \textbf{P} over $G$ is the clique-quotient of some (reduced) picture over $G$.
The label of a vertex in a clique-picture is called a \textit{clique-label}.
\end{defn}

The process of joining two vertices of $P$ or two cliques of \textbf{P} to form a single {clique} is called \textit{amalgamation}. 
 (Here we also include the possibility of amalgamating a clique with itself. By this we mean adding arcs from $v$ to $v$ and/or regions to an existing clique
$v$, which could alter some properties of the clique such as simple-connectivity.)
If it is possible to amalgamate two cliques (possibly after doing bridge-moves), we say that \textbf{P} is not \textit{reduced}, and \textit{reduced} otherwise. The minimality and efficiency conditions carry over from pictures.

\begin{rem}\label{rmk}
Let $\Gamma$ be $P$ or $\textbf{P}$. Two arcs of $\Gamma$ are said to be \textit{parallel} if they are the only two arcs in the boundary of some simply-connected region $\bigtriangleup$ of $\Gamma$. We will also use the term \textit{parallel} to denote the equivalence relation generated by this relation, and refer to any of the corresponding equivalence classes as a \textit{class of $\omega$ parallel arcs} or \textit{$\omega$-zone}. Given a \textit{$\omega$-zone} joining vertices $u$ and $v$ of $\Gamma$, consider the $\omega- 1$ two-sided regions separating these arcs. Each such region has a corner label $x_{_u}$ at $u$ and a corner label $x_{_v}$ at $v$, and the picture axioms imply that $x_{_u}x_{_v} = 1$ in $G_{_1}$ or $G_{_2}$. The $\omega -1$ corner labels at $v$ spell a cyclic subword $s$ of length $\omega-1$ of the label of $v$. Similarly the corner labels at $u$ spell out a cyclic subword $t$ of length $\omega -1$. Moreover, $s=t^{-1}$. If we assume that $\Gamma$ is reduced, then $u$ and $v$ do not cancel. Hence the cyclic permutations of the labels at $v$ and $u$ of which $s$ and $t$ are initial segments respectively are not equal. Hence $t$ and $s$ are \textit{pieces}. 
\end{rem}

As in graphs, the \textit{degree} of a vertex in $\Gamma$ is the number of \textit{zones} incident on it. For a region, the \textit{degree} is the number corners it has. We say that a vertex $v$ of $\Gamma$ satisfies the local $C(m)$ condition if it is joined to at least $m$ \textit{zones}. We say that $\Gamma$ satisfies $C(m)$ if every interior vertex satisfies local $C(m)$.

\section{Preliminary results}\label{sec5}
In this section we obtain some preliminary results about clique-pictures. One advantage  clique-pictures has over ordinary pictures is that 
 some cyclic permutation of the inverse of any clique-label can also be interpreted as a clique-label. Thus we may regard any clique as having either possible orientation, as convenient.  We make the convention
that all our cliques
have the same (positive or clockwise) orientation. 

Throughout this paper we shall assume that our clique-picture is minimal. Note that up to cyclic permutation the clique-label of a clique $u$
has the form
\begin{equation}\label{eqn}
\mathrm{label}(u)=\prod_{i=1}^k a^{\alpha_{_i}} U b^{\beta_{_i}} U^{-1}
\end{equation}

for $0<\alpha_{_i}<p$ and $0<\beta_{_i}<q$.  Denote our clique-picture by $\Gamma$ and let $v$ be a clique of $\Gamma$. Take a cyclic permutation $c(k)$ of  the label of $v$ of the form (\ref{eqn}) and express it as $$c(k)= z_{ 0} z_{_1} \cdots z_{_{kl-1}} $$ where $l=\ell(a^{\alpha_{_i}} U b^{\beta_{_i}} U^{-1})$.

\medskip
 We call a letter $z_j$ of a clique-label
$$\mathrm{label}(u)=\prod_{i=1}^k a^{\alpha_{_i}} U b^{\beta_{_i}} U^{-1}= z_{ 0} z_{_1} \cdots z_{_{kl-1}} $$
{\em special} if $j\equiv 0$ mod $l/2$.  Note that every special letter is equal to a power of $a$ or of $b$.

\medskip
Let $\Omega:=(G_{_1}\sqcup G_{_2})\setminus\{1\}$.  Define $\sim$ to be the smallest equivalence relation on $\Omega$ with the property that $a^\alpha\sim a$ for all $\alpha$ such that $a^\alpha\neq 1$ and $b^\beta\sim b$ for all $\beta$ such that $b^\beta\neq 1$. Note that the natural involution $x\mapsto x^{-1}$ on $\Omega$ descends to an involution on $\Omega/\sim$, under which the $\sim$-classes of $a$ and $b$ are fixed points.

\medskip
We  will sometimes work in the free monoid $(\Omega/\sim)^*$. In particular clique-labels are periodic  in $(\Omega/\sim)^*$ with period $l$.

\medskip
\paragraph{\textbf{Notations}}
Let $v$ be a clique of degree $k$. This means that there are $k$ zones incident at $v$, say $Z_{_1}, Z_{_2}, \ldots, Z_{_k}$ labelled consecutively in clockwise order around $v$ as shown in Figure \ref{zns}.
\begin{figure}[h!]
\centering
\includegraphics[width=.3\linewidth]{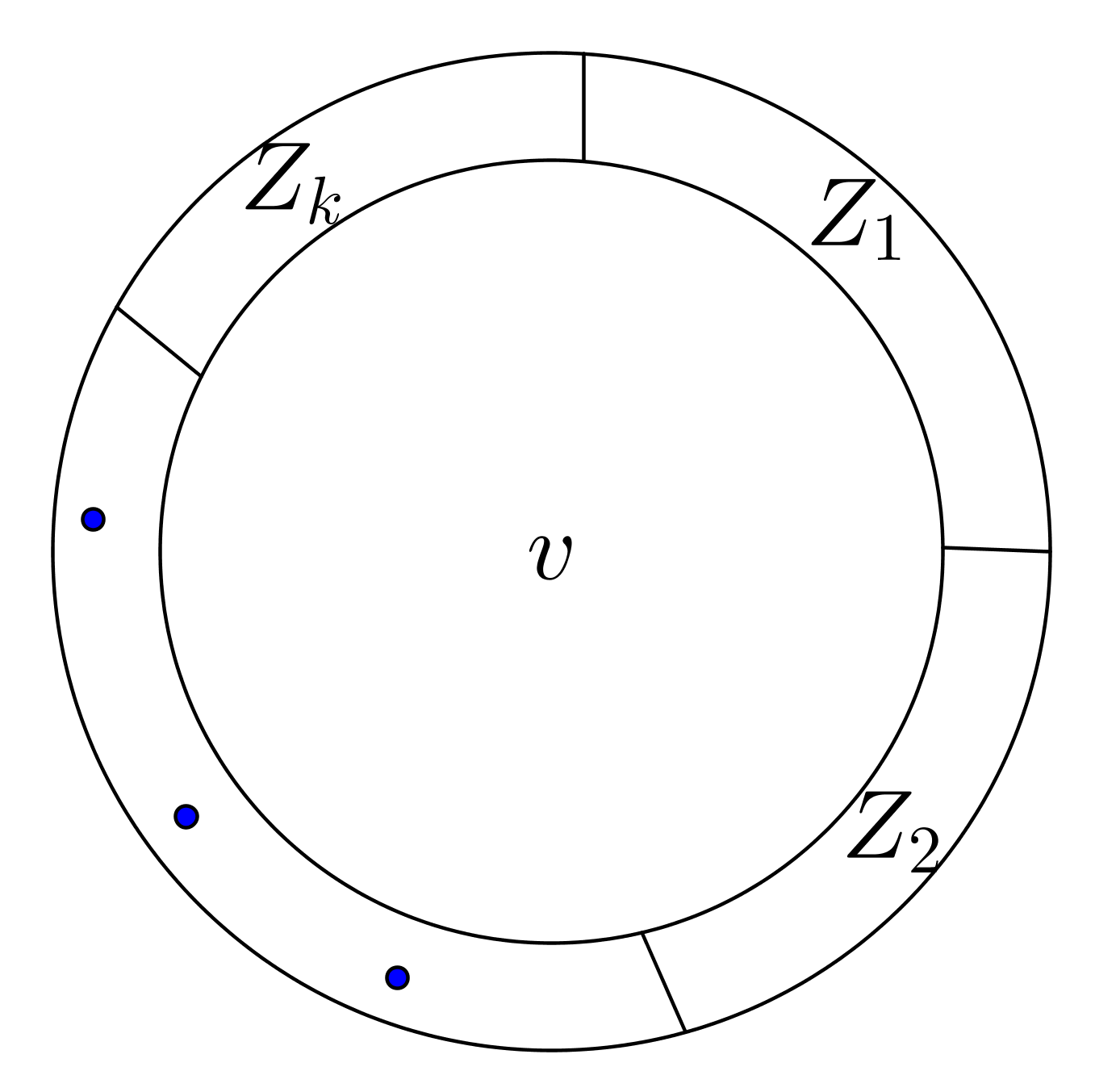}
 \caption{\textit{Zones}} \label{zns}
\end{figure} 

Recall that each zone $Z_{_i}$ is a class of parallel arcs. The number of arcs in $Z_{_i}$  is denoted by $\omega_{_i}$. If $Z_{_i}$ connects cliques $u$ and
$v$ (possibly $u=v$), then $Z_{_i}$ determines cyclic
subwords $s_{_i},t_{_i}$ of length $\omega_{_i}-1$ of the clique-labels 
of $u,v$ respectively, such that $s_{_i}\equiv t_{_i}^{-1}$ in $G_{_1}*G_{_2}$

A zone $Z_i$ is said to be {\em large} if $\omega_{_i}$

\section{Theorem \ref{thmm1}}\label{sec6}

In this section we prove Theorem \ref{thmm1}.  
We first note another consequence of Lemma \ref{nz2}.

\medskip
We will use the following generalisation of the concept of
periodic word, as applied to cyclic subwords of 
 $$\mathrm{label}(v)=z_0z_1\cdots z_{nl-1}=
\prod_{k=1}^n [a^{\alpha(k)}Ub^{\beta(k)}U^{-1}].$$
We say that a cyclic subword $W=z_j\cdots z_k$ (subscripts modulo $nl$) of $\mathrm{label}(v)$ is {\em virtually periodic} with {\em virtual period} $\mu$ if, for each $i\in\{j,j+1,\dots,k-\mu\}$, one of the following happens:

\begin{enumerate}
\item $z_i=z_{i+\mu}$;
\item a special letter $z_d=a^\psi$ belongs to $W$, for
some $d\equiv 0$ mod $l$, $i\equiv d$ mod $\mu$, and each of $z_i$, $z_{i+\mu}$ is equal to a power of $a$;
\item a special letter $z_d=b^\psi$ belongs to $W$, for
some $d\equiv l/2$ mod $l$, $i\equiv d$ mod $\mu$, and each of $z_i$, $z_{i+\mu}$ is equal to a power of $b$;
\item $a$ and $b$ have a common root $c$ in $G_{_1}$ or $G_{_2}$,
a special letter $z_d=c^\psi$ belongs to $W$, for
some $d\equiv 0$ mod $l/2$, $i\equiv d$ mod $\mu$, and each of $z_i$, $z_{i+\mu}$ is equal to a power of $c$.
\end{enumerate}

Recall that the pair $(a,b)$ is assumed to be admissible.
If $a$ and $b$ have a common power in $G_{_1}$ and $G_{_2}$, then
they have a common root, and in that case the second and third possibilities in the above definition are subsumed in the fourth.  Otherwise the fourth possibility cannot occur.

By definition, the clique-label $\mathrm{label}(v)$ itself is
virtually periodic of virtual period $l$.  Other examples of virtually periodic words arise from 
zones incident at $v$.

\begin{lem}\label{largezone}
Suppose that $Z_i$ is a
 zone incident at $v$.  Then
there is a positive integer $\mu\le l/2$ and a cyclic subword
$s_i^+$ of $\mathrm{label}(v)$ of length $\omega_i+\mu-1$
and virtual period $\mu$, such that $s_i$ is either an initial or a terminal segment of $s_i^+$.
\end{lem}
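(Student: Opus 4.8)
The idea is to read off the structure of the zone $Z_i$ directly from the picture axioms and then use Lemma~\ref{nz2} (via its setup for clique-labels) to control the resulting word in $(\Omega/\sim)^*$. First I would recall that $Z_i$ is a class of $\omega_i$ parallel arcs joining $v$ to some clique $u$ (possibly $u=v$), determining a cyclic subword $s_i$ of $\mathrm{label}(v)$ of length $\omega_i-1$ and a cyclic subword $t_i$ of $\mathrm{label}(u)$ of length $\omega_i-1$ with $s_i\equiv t_i^{-1}$ in $G_{_1}*G_{_2}$. Write $\mathrm{label}(v)=z_0z_1\cdots z_{nl-1}$ in the standard form, so that $s_i=z_j z_{j+1}\cdots z_{j+\omega_i-2}$ for a suitable starting index $j$. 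The plan is to produce the integer $\mu\le l/2$ and the word $s_i^+$ of length $\omega_i+\mu-1$ by extending $s_i$ by $\mu$ extra letters at one end (whichever end the statement forces), and to verify the virtual-period condition letter by letter using the $\sim$-class equalities $x_ux_v=1$ coming from the two-sided regions of the zone together with the periodicity of clique-labels modulo $l$.

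The key computation is this. Because $u\approx v$ (they lie in the same clique only if $u=v$; if $u\ne v$ they are merely joined by arcs, but the relevant point is that $s_i\equiv t_i^{-1}$ and both are cyclic subwords of words of the form $\prod a^{\alpha}Ub^\beta U^{-1}$), the two overlapping occurrences of an $x_1V_1y_1V_1^{-1}$-type block inside $\mathrm{label}(v)$ that arise from $s_i$ and from (the image of) $t_i$ are proper cyclic conjugates of each other unless the zone is ``short''. Applying Lemma~\ref{nz2} to these two occurrences gives a common period $\mu=\gcd(\text{shift},l/2)$, with $\mu\le l/2$, and the $\prod[\xi_tV_3\eta_tV_3^{-1}]$ structure shows that the only places where literal equality $z_i=z_{i+\mu}$ can fail are at special letters — and there the failure is exactly of one of the four permitted types in the definition of virtual periodicity, because special letters are powers of $a$, of $b$, or (in the common-root case) of $c$, and admissibility of $(a,b)$ is what forces the discrepancy to stay inside a single $\sim$-class. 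So the word obtained by adjoining to $s_i$ the segment of $\mathrm{label}(v)$ that realises this $\mu$-shift is virtually periodic of virtual period $\mu$, and $s_i$ sits inside it as an initial or terminal segment depending on whether we extended to the right or to the left.

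I would handle the degenerate cases separately but briefly: if $\omega_i$ is so small that $s_i$ and the shifted copy $t_i^{-1}$ do not overlap enough to invoke Lemma~\ref{nz2} directly, or if we are in the exceptional situation $x_1V_1y_1V_1^{-1}\equiv$ (reversed block) of the proof of Lemma~\ref{refine}, then one can take $\mu$ to be the relevant divisor of $l/2$ by hand and check virtual periodicity trivially, since the required equalities reduce either to the defining periodicity of $\mathrm{label}(v)$ modulo $l$ or to the common-root condition supplied by admissibility. In all cases $\mu\le l/2$ because $l$ is the period of the clique-label and the shift underlying the overlap is strictly less than $l$ (the zone is a proper subword, and the two vertices do not cancel, as noted in Remark~\ref{rmk}).

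\textbf{Main obstacle.} The delicate point is bookkeeping: one must translate the abstract conclusion of Lemma~\ref{nz2} (stated for a word $W$ with a proper cyclic conjugate) into a statement about \emph{cyclic subwords} of a fixed clique-label, keeping careful track of which end of $s_i$ gets extended and of where the special letters $z_d$ with $d\equiv 0$ or $l/2$ mod $l$ fall relative to the residue class $i\equiv d$ mod $\mu$. Getting the indices to match the four cases of the definition, and in particular confirming that admissibility is precisely the hypothesis that rules out any discrepancy outside a single $\sim$-class, is where the real work lies; the rest is routine manipulation of periods and of the identities $x_ux_v=1$.
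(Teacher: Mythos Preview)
Your plan takes a substantially more complicated route than the paper, and in doing so it introduces a real gap.  The paper's argument does not invoke Lemma~\ref{nz2} at all.  Instead it uses the elementary observation (made at the start of Section~\ref{sec5}) that a cyclic permutation of $\mathrm{label}(u)^{-1}$ is again of the form $\prod_k a^{\gamma(k)}Ub^{\delta(k)}U^{-1}$.  Writing $s_i=z_j\cdots z_k$ inside $\mathrm{label}(v)$ and $t_i^{-1}=y_{j'}\cdots y_{k'}$ inside $\mathrm{label}(u)^{-1}$, the equality $s_i\equiv t_i^{-1}$ forces $j'\equiv j\pmod 2$, and if $j'\equiv j\pmod l$ then an amalgamation of $u$ and $v$ is possible, contrary to minimality.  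Hence there is a representative $\mu\in(0,l/2]$ of $\pm(j'-j)\pmod l$.  One then simply sets $s_i^+$ to be the extension of $s_i$ by $\mu$ letters on the appropriate side of $\mathrm{label}(v)$; the overlap of length $\omega_i-1$ between $s_i$ and its shift agrees letter-for-letter at non-special positions (since both $\mathrm{label}(v)$ and $\mathrm{label}(u)^{-1}$ are literally equal to $aUbU^{-1}$ there) and at special positions both letters are powers of $a$, or both are powers of $b$.  That is exactly the definition of virtual period $\mu$.

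Your proposal replaces this direct index argument with an appeal to Lemma~\ref{nz2}, and this does not work as stated.  Lemma~\ref{nz2} compares a full word $x_1V_1y_1V_1^{-1}$ with a proper cyclic permutation of itself; to apply it you would need $s_i$ (or its image) to contain a complete block $a^\alpha Ub^\beta U^{-1}$, which requires $\omega_i>l$ --- but no such bound is assumed, and indeed the very next corollary shows $\omega_i<l$ always.  Your fallback for ``short'' zones (``take $\mu$ to be the relevant divisor of $l/2$ by hand and check virtual periodicity trivially'') is not a proof: there is no reason virtual periodicity should hold for an arbitrary divisor of $l/2$.  Moreover, the formula $\mu=\gcd(\text{shift},l/2)$ is simply wrong: the paper takes $\mu$ to be the shift itself (reduced into $(0,l/2]$), not a gcd, and that is the only value for which the letter-by-letter comparison goes through.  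The missing idea is precisely the one-line observation that $\mathrm{label}(u)^{-1}$ has the same $aUbU^{-1}$-block structure as $\mathrm{label}(v)$; once you see that, the lemma is immediate and Lemma~\ref{nz2} is irrelevant here.
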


\begin{proof}
Let $s_i$ be the cyclic subword $z_jz_{j+1}\cdots z_k$ of $$\mathrm{label}(v)=z_0z_1\cdots z_{nl-1}=
\prod_{k=1}^n [a^{\alpha(k)}Ub^{\beta(k)}U^{-1}].$$

The zone $Z_i$ links $v$ to an adjacent clique $u$
and identifies $s_i$ with $t_i^{-1}$ for some cyclic
subword $t_i$ of $\mathrm{label}(u)$. Thus $t_i^{-1}$
is a cyclic subword of $\mathrm{label}(u)^{-1}$.  Write
$t_i^{-1}=y_{j'}y_{j'+1}\cdots y_{k'}$ where
$$\mathrm{label}(u)^{-1}=y_0y_1\cdots y_{ml-1}=
\prod_{k=1}^m [a^{\gamma(k)}Ub^{\delta(k)}U^{-1}].$$

Since $s_i\equiv t_i^{-1}$, then in particular
$k'-j'\equiv\ell(t_i)-1=\ell(s_i)-1\equiv k-j$ mod $l$.  If $j\equiv j'$
mod $l$, then we may amalgamate the cliques $u$ and $v$, contrary to hypothesis.  Hence there are
integers $n$ and $\mu_i$ such that
$0\le n\le m$, $0<\mu_i\le l/2$ such that
$j'=j+nl\pm\mu_i$.  Define
$$s_i^+=\left\{\begin{array}{rcl}
z_{j-\mu_i}\cdots z_k~~&\mathrm{if}~~& j'=j+nl-\mu_i\\
z_j\cdots z_{k+\mu_i}~~&\mathrm{if}~~&
j'=j+nl+\mu_i\end{array}\right.$$

In the first case, $s_i$ is a terminal segment of $s_i^+$, while the initial segment of the same length
agrees with $t_i^{-1}\equiv s_i$, except possibly
at special letters $z_d$ ($d\equiv 0$ mod $l/2$)
which may be a different power of $a$ (or of $b$) than
the corresponding letter of $s_i$.  It follows that
$s_i^+$ is virtually periodic of virtual period $\mu_i$, as claimed.

The second case is entirely analogous, except that $s_i$ is an initial rather than a terminal segment
of $s_i^+$.
\end{proof}

We need to analyse the interaction of virtually periodic subwords of $\mathrm{label}(v)$ obtained by applying Lemma \ref{largezone} to two adjacent large zones at $v$.  To do this we will use the following analogue of Corollary \ref{ctm}.

\begin{lem}\label{twozones}
Suppose that the cyclic subword $W=z_j\cdots z_k$ (subscripts modulo $nl$) of 
$$ \mathrm{label}(v)=z_0z_1\cdots z_{nl-1}=
\prod_{k=1}^n [a^{\alpha(k)}Ub^{\beta(k)}U^{-1}].$$
is the union of a virtually
periodic segment
 $W_1$
of virtual period $\mu$ and
 a virtually
periodic
segment $W_2$
of virtual period $\nu$.  Let $\gamma=\gcd(\mu,\nu)$.  If the intersection 
of these segments has length at least $\mu+\nu-\gamma$, then $W$ is virtually periodic of virtual period $\gamma$.
\end{lem}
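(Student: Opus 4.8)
The statement is a "virtually periodic" analogue of Corollary~\ref{ctm}, which in turn rests on the Fine–Wilf theorem (Theorem~\ref{tm}). So the natural strategy is to pass from the ``virtual'' setting to the honest free monoid $(\Omega/\sim)^*$, apply the genuine Fine–Wilf machinery there, and then re-interpret the conclusion back in terms of virtual periodicity. Concretely, I would first record the elementary observation that a segment $W_1$ of virtual period $\mu$ becomes a genuinely periodic word of period $\mu$ once one forgets, at each special letter, the precise power of $a$ (resp.\ $b$, resp.\ the common root $c$) and remembers only the $\sim$-class; that is exactly the content of the four clauses in the definition of virtual periodicity. Thus the images $\bar W_1,\bar W_2$ of $W_1,W_2$ in $(\Omega/\sim)^*$ have honest periods $\mu,\nu$ respectively, and their overlap $\bar u$ (the image of the intersection of $W_1$ and $W_2$) has length $\ge \mu+\nu-\gamma$.

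\textbf{Main steps.} (1) Apply Corollary~\ref{ctm} to $\bar W = \bar W_1 \cup \bar W_2$: the initial segment $\bar W_1$ has period $\mu$, the terminal segment $\bar W_2$ has period $\nu$, they intersect in $\bar u$ with $\ell(\bar u)\ge\mu+\nu-\gamma$, so $\bar W$ has period $\gamma=\gcd(\mu,\nu)$ in $(\Omega/\sim)^*$. (2) Lift this back: for $i\in\{j,\dots,k-\gamma\}$ we now know $z_i\sim z_{i+\gamma}$, i.e.\ $z_i$ and $z_{i+\gamma}$ are either literally equal or are both powers of $a$, or both powers of $b$, or both powers of the common root $c$. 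To upgrade ``$\sim$'' to one of the four clauses of virtual periodicity with period $\gamma$, I must produce, in each ambiguous case, the required special letter $z_d$ with $d\equiv 0$ (or $l/2$) mod $l$ (or mod $l/2$ in the common-root case) and $i\equiv d$ mod $\gamma$. The key point is that if $z_i,z_{i+\gamma}$ are distinct powers of $a$ but $\sim$-equal, then $z_i$ and $z_{i+\gamma}$ are themselves ``special'' in the relevant sense — a non-special letter is determined by its $\sim$-class together with the syntactic position within a period, because away from special positions the clique-label is genuinely periodic of period $l$ and all letters come from the fixed pattern $U$ or $U^{-1}$. Using $\gamma\mid\mu$ and $\gamma\mid\nu$, and tracing which special letters of $W_1$ (resp.\ $W_2$) forced the discrepancies, one locates such a $z_d$ in the correct residue class mod $\gamma$; the divisibility $\gamma\mid\mu$ ensures the residue class of that special letter mod $\gamma$ propagates correctly along the $\gamma$-period.

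\textbf{The expected obstacle.} The passage in step~(2) from ``$z_i\sim z_{i+\gamma}$'' to ``one of clauses (1)--(4) of virtual periodicity holds for period $\gamma$'' is the delicate part: knowing the two letters agree modulo $\sim$ is strictly weaker than virtual periodicity, which additionally pins down \emph{where} discrepancies are allowed to occur (only at special positions in a single fixed residue class mod the period). I will need to argue that any position $i$ at which $z_i\ne z_{i+\gamma}$ must, tracing back through the virtual periodicities of $W_1$ and $W_2$, be congruent mod $\gamma$ to a genuinely special letter of the appropriate type lying in $W$, and that only one common root $c$ can be involved (here admissibility, recalled just before Lemma~\ref{largezone}, is what guarantees $a$ and $b$ have at most one relevant common root and that clauses (2),(3) are subsumed in (4) when a common power exists). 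A secondary bookkeeping nuisance is the cyclic indexing modulo $nl$: I should fix a single linear representative of $W$ long enough to contain $W_1$, $W_2$ and all the special letters under discussion, so that Corollary~\ref{ctm}, which is stated for ordinary (non-cyclic) words, applies verbatim.
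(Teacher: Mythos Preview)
Your overall strategy---pass to $(\Omega/\sim)^*$, apply Corollary~\ref{ctm} there, then lift back---is genuinely different from the paper's direct approach, and step~(1) is sound: virtual periodicity does imply honest periodicity in the quotient monoid, so Corollary~\ref{ctm} yields $z_i\sim z_{i+\gamma}$ for all relevant $i$. But the argument you offer for the lift in step~(2) contains a false claim. It is \emph{not} true that if $z_i\neq z_{i+\gamma}$ are $\sim$-equal then $z_i$ and $z_{i+\gamma}$ are themselves special: nothing prevents the word $U$ from containing letters that are nontrivial powers of $a$ or of $b$, so non-special positions can perfectly well carry letters in the $\sim$-class of $a$. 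Knowing only $z_i\sim z_{i+\gamma}$ therefore does not locate the required special letter $z_d$ in $W$ with $i\equiv d\pmod\gamma$, and your justification (``a non-special letter is determined by its $\sim$-class together with the syntactic position within a period'') is either circular (if the period meant is $\gamma$) or irrelevant (if it is $l$).

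What is actually needed for the lift is precisely what the paper does: construct a chain $i=i(0),i(1),\dots,i(N)=i+\gamma$ with consecutive terms differing by $\pm\mu$ inside $W_1$ or $\pm\nu$ inside $W_2$, and observe that if some link satisfies $z_{i(t)}\neq z_{i(t+1)}$, then the virtual periodicity of $W_1$ or $W_2$ supplies a special letter $z_d$ with $i(t)\equiv d$ mod $\mu$ or mod $\nu$, hence mod $\gamma$; one then uses admissibility to propagate the conclusion that all $z_{i(t)}$ (in particular $z_i$ and $z_{i+\gamma}$) are powers of $a$, of $b$, or of the common root $c$. You correctly gesture at this in your obstacle paragraph (``tracing back through the virtual periodicities of $W_1$ and $W_2$''), and that is indeed the fix---but once you construct the chain, it already delivers $z_i\sim z_{i+\gamma}$ as a by-product, so the Fine--Wilf detour through $(\Omega/\sim)^*$ buys you nothing. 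The paper's proof is exactly this chain argument done once: its first paragraph builds the chain (effectively a constructive proof of Fine--Wilf tailored to this setting, using the overlap hypothesis $\ell(W_1\cap W_2)\ge\mu+\nu-\gamma$), and the remaining paragraphs carry out the case analysis on the first failing link, splitting according to whether $a,b$ have a common root.
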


\begin{proof}
Let $i,m$ be such that $z_i$ and $z_{i+m\gamma}$
are letters of $W$.  Then we claim there is a finite chain of  subscripts
$i(0),i(1),\dots,i(N)$ with $i(0)=i$ and $i(N)=i+ m\gamma$ such  that, for each $t$ either $|i(t)-i(t+1)|=\mu$ and $z_{i(t)}$ and $z_{i(t+1)}$ are letters of $W_1$, or $|i(t)-i(t+1)|=\nu$ and $z_{i(t)}$ and $z_{i(t+1)}$ are letters of $W_2$.
Certainly each letter in $W_1$ (resp. $W_2$) is linked to some letter in $W_0:=W_1\cap W_2$ by such a chain, since $\ell(W_0)\ge\max(\mu,\nu)$, so it suffices to prove the claim when $z_i,z_{i+m\gamma}$ are letters of $W_0$.  Write
$m\gamma=\alpha\mu+\beta\nu$ where $\alpha,\beta\in\mathbb{Z}$, and argue by induction on $|\alpha|+|\beta|$.  Without loss of generality, assume that $\alpha>0$.
If $z_{i+\mu}$ is a letter of $W_0$, then the result follows by applying the inductive hypothesis to $z_{i+\mu},z_{i+m\gamma}$.  Otherwise, $\beta<0$ and $z_{i-\nu}$ is a letter of $W_0$, so we may apply the inductive hypothesis to $z_{i-\nu},z_{i+m\gamma}$.  This proves the claim.

Now we take $m=1$ in the above, and prove that at least one
of the alternative conditions for virtual periodicity holds.

If $z_{i(t)}=z_{i(t+1)}$ for all $n$ then $z_i=z_{i+\gamma}$.

Suppose next that $z_{i(t)}\ne z_{i(tt+1)}$ for at least one value of
$t$, and $a,b$ have a common root $c$. Then there
is a special letter $z_d$ in $W$, $i\equiv i(t) \equiv d$ mod $\mu$ or mod $\nu$ (and hence in either case mod $\gamma$). Moreover for each $t$ either $z_{i(t)}=z_{i(t+1)}$ or each of $z_{i(t)},z_{i(t+1)}$ is a power of $c$.  Since 
$z_{i(t)}\ne z_{i(t+1)}$ for at least one value of
$t$, it follows that each $z_{i(t)}$ is a power of $c$.  In particular $z_i$ and $z_{i+\gamma}$ are both powers of $c$.

Finally, suppose that $z_{i(t)}\ne z_{i(t+1)}$ for at least one value of
$t$, and $a,b$ have no common root.  By admissibility, $a,b$ also have no common non-trivial power.  As above, $W$ contains a special letter $z_d=a^\psi$ or $z_d=b^\psi$, and $i\equiv d$ mod $\gamma$.  Consider the least $t$ for which $z_{i(t)}\ne z_{i(t+1)}$.  Then either $z_{i(t)}$ and $z_{i(t+1)}$ are both powers of $a$ or both powers of $b$.  Assume the former.  Then $z_i=z_{i(0)}=z_{i(1)}=\cdots =z_{i(t)}$ are also powers of $a$.  We claim that $z_{i(t+1)}$, \dots $z_{i(N)}=z_{i+\gamma}$ are also powers of $a$, which will complete the proof.  Suppose by way of contradiction that this is not true.  Then there is an $m$ for which $z_{i(m)}$ is a power of $a$ and $z_{i(m+1)}$ is not a power of $a$.  By the definition of virtual periodicity, it follows that both $z_{i(m)}$  and $z_{i(m+1)}$ must be powers of $b$.  But then $z_{i(m)}$ is simultaneously a power of $a$ and of $b$, contrary to the admissibility hypothesis.
\end{proof}

\begin{cor}\label{refine2}
If a clique-label has virtual period $\mu<l$, then a
refinement is possible.
\end{cor}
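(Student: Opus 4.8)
The plan is to extract from the hypothesis a refinement in the sense of Section~\ref{sec3}, reducing where possible to Lemma~\ref{refine}. Write $\mathrm{label}(v)=z_0z_1\cdots z_{nl-1}=\prod_{i=1}^{n}a^{\alpha(i)}Ub^{\beta(i)}U^{-1}$. By definition this word is virtually periodic of virtual period $l$, and by hypothesis also of virtual period $\mu<l$; since its length $nl$ exceeds $l+\mu-\gcd(l,\mu)$, Lemma~\ref{twozones} --- applied with both overlapping virtually periodic segments taken to be $\mathrm{label}(v)$ itself --- shows that $\mathrm{label}(v)$ has virtual period $\gcd(l,\mu)$. Replacing $\mu$ by this value, I may assume that $\mu\mid l$; write $l=\mu t$ with $t\ge 2$.

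If $a$ and $b$ have a common root in $G_{_1}$ or $G_{_2}$, then a refinement is available by the argument in the proof of Lemma~\ref{refine}, so I may assume that $a$ and $b$ have no common root. By admissibility they then have no common non-trivial power, hence possibility~(4) in the definition of virtual periodicity cannot occur, and each of possibilities~(1)--(3) forces $[z_i]=[z_{i+\mu}]$ in $\Omega/\sim$. Thus $\mathrm{label}(v)$ is \emph{honestly} periodic of period $\mu$ in $(\Omega/\sim)^{*}$. Comparing this with its period-$l$ shape $([a][U][b][U^{-1}])^{n}$, and using that the letter in position $l/2$ has shape $[b]$, I conclude that $t$ is odd: were $t$ even, position $l/2$ would be a multiple of $\mu$ and hence carry shape $[a]$, forcing $[a]=[b]$ and so a common power of $a$ and $b$, which is excluded.

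It remains to identify the positions at which honest $\mu$-periodicity of $\mathrm{label}(v)$ genuinely fails, and to read off a shorter generating set. Since the only exceptions to $z_i=z_{i+\mu}$ come from possibilities~(2)--(3), they lie in the residue classes $0$ and $l/2$ modulo $\mu$ (note that $\mu$ is even, because $l$ is even and $t$ is odd), and at those positions the letters are powers of $a$, respectively of $b$. Writing the length-$\mu$ block of $\mathrm{label}(v)$ lying between two consecutive powers of $a$ in the form $a^{(\cdot)}V_{1}b^{(\cdot)}V_{2}$ with $\ell(V_{1})=\ell(V_{2})=\mu/2-1$, the relations $z_{l/2+j}=z_{l/2-j}^{-1}$ --- which record that each length-$l$ factor really has the shape $a^{\alpha}(\,\cdot\,)b^{\beta}(\,\cdot\,)^{-1}$ --- force $V_{2}\equiv V_{1}^{-1}$. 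Consequently each factor equals $\prod_{k}a^{e_{k}}V_{1}b^{f_{k}}V_{1}^{-1}=\prod_{k}a^{e_{k}}(V_{1}bV_{1}^{-1})^{f_{k}}$, so $R$ is a word in $\{a,V_{1}bV_{1}^{-1}\}$ with $\ell(a)+\ell(V_{1}bV_{1}^{-1})=\mu<l=\ell(a)+\ell(UbU^{-1})$: the desired refinement. (Equivalently, one now sees that $a^{\alpha}Ub^{\beta}U^{-1}$ and $a^{\gamma}Ub^{\delta}U^{-1}$ are proper cyclic conjugates for appropriate exponents, so one may instead quote Lemma~\ref{refine}.)

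I expect the delicate point to be the bookkeeping in this last step: one must show that any letter of $U$ with $\Omega/\sim$-shape $[a]$ (or $[b]$) is literally a power of $a$ (or of $b$), so that the residue classes $0$ and $l/2$ modulo $\mu$ really do account for every failure of periodicity; make the identity $V_{2}\equiv V_{1}^{-1}$ precise across the junctions of the blocks; and confirm that the two-generator subgroups produced are free products of cyclic groups, arguing the refinement inequality directly in the non-generic case.
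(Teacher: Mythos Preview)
Your approach is essentially the paper's: reduce to $\mu\mid l$ via Lemma~\ref{twozones}, set $s=l/\mu$, and read off from the virtual periodicity a shorter two-generator expression for the clique-label.  In the no-common-root case your parity argument that $t$ is odd is correct, and your derivation of $V_2\equiv V_1^{-1}$ (via $z_k=z_{l-k}^{-1}$ combined with honest $\mu$-periodicity away from the residues $0,\mu/2$) is valid and more explicit than what the paper writes.  The worries in your final paragraph are largely unfounded: by the very definition of $\sim$ in the paper, the class $[a]$ \emph{is} the set of nontrivial powers of $a$, so $[z_{\mu/2}]=[b]$ already forces $z_{\mu/2}$ to be a power of $b$.

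The genuine gap is your treatment of the common-root case.  Merely having a common root $c$ of $a$ and $b$ does \emph{not} produce a refinement: passing from $\langle a\rangle*\langle UbU^{-1}\rangle$ to $\langle c\rangle*\langle UcU^{-1}\rangle$ changes neither $\ell(S)+\ell(T)$ nor the length of $R'$, so neither defining inequality becomes strict.  Nor does Lemma~\ref{refine} apply as stated --- its hypothesis is a proper cyclic conjugacy that you have not established here.  The paper does not bypass this case; it is precisely the $s$ even branch.  There one shows (using that $l/2\equiv 0\pmod\mu$ together with admissibility) that a common root $c$ is forced, that the only exceptional residue class is $0\pmod\mu$, and that the symmetry $z_{l/2+j}=z_{l/2-j}^{-1}$ then makes the middle letter $z_{\mu/2}$ satisfy $z_{\mu/2}=z_{\mu/2}^{-1}$; hence the clique-label takes the form $\prod_j c^{\gamma(j)}VxV^{-1}$ with $x$ of order $2$ and $\ell(VxV^{-1})=\mu-1<l-1$, which \emph{is} a refinement.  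To complete your argument you must carry out this analysis (or, if a common root exists but $t$ is odd, rerun your own argument with powers of $c$ allowed at both exceptional residues).
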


\begin{proof}
By Lemma \ref{twozones} the clique-label 
$$\mathrm{label}(v)=\prod_{j=1}^n [a^{\alpha(j)}Ub^{\beta(j)}U^{-1}]$$
has virtual
period $\gcd(\mu,l)|l$, so without loss of generality $\mu|l$.  Let $V=z_1\cdots z_{\frac\mu2}$ and let $s=l/\mu$.  Then by definition of virtual periodicity
and by the admissibility hypothesis one of the following is true:
\begin{enumerate}
\item $s$ is odd and $$\mathrm{label}(v)=\prod_{j=1}^{sn} [a^{\gamma(j)}Vb^{\delta(j)}V^{-1}]$$ for some $\gamma(j),\delta(j)$.
\item $s$ is even, $a,b$ have a common root $c$, and $$\mathrm{label}(v)=\prod_{j=1}^{sn} [c^{\gamma(j)}VxV^{-1}]$$
for some letter $x$ of order $2$ and some $\gamma(j)$.
\end{enumerate}
In either case, we have a refinement.
\end{proof}

\begin{cor}\label{lz}
Suppose that $v$ is a clique in a minimal clique-picture
over $G$ satisfying Hypothesis A.  Suppose also that 
the generalised triangle group description of $G$ has no refinement.
Then the length of any zone incident at $v$ is strictly less than $l$.
\end{cor}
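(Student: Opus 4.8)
The plan is to argue by contradiction: suppose some zone $Z_i$ incident at $v$ has $\omega_i\ge l+1$. First I would apply Lemma \ref{largezone} to $Z_i$ to obtain a cyclic subword $s_i^+$ of $\mathrm{label}(v)$ of length $\omega_i+\mu-1\ge l+\mu$ which is virtually periodic of virtual period $\mu\le l/2$. The point is that $s_i^+$ is long enough that, together with the fact that $\mathrm{label}(v)$ itself is virtually periodic of virtual period $l$ (by definition), we can feed the pair of virtually periodic segments into Lemma \ref{twozones}. Concretely, take $W_1=s_i^+$ with virtual period $\mu$, and take $W_2$ to be (a suitable cyclic subword of) $\mathrm{label}(v)$ with virtual period $l$; since $\mathrm{label}(v)$ has period exactly $l$ and wraps around, its overlap with $s_i^+$ can be taken to contain $s_i^+$ entirely, so the intersection has length at least $\omega_i+\mu-1\ge l+\mu > \mu+l-\gcd(\mu,l)$ because $\gcd(\mu,l)\ge 1$. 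Hence Lemma \ref{twozones} applies and forces $\mathrm{label}(v)$ to be virtually periodic of virtual period $\gamma=\gcd(\mu,l)$.

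The key observation is then that $\gamma=\gcd(\mu,l)\le\mu\le l/2<l$, so $\mathrm{label}(v)$ has virtual period strictly less than $l$. By Corollary \ref{refine2}, this means a refinement of the generalised triangle group description is possible — contradicting the hypothesis that no refinement exists. This contradiction shows $\omega_i<l+1$, i.e. $\omega_i\le l$; but we want the strict inequality $\omega_i<l$, so one more case must be dispatched: the borderline case $\omega_i=l$. Here Lemma \ref{largezone} gives $s_i^+$ of length $l+\mu-1$ and virtual period $\mu\le l/2$, and the intersection of $s_i^+$ with (a wrapped copy of) $\mathrm{label}(v)$ still has length $l+\mu-1$, which is $\ge\mu+l-\gamma$ as soon as $\gamma\ge 1$ — always true. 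So the same argument runs: $\mathrm{label}(v)$ acquires virtual period $\gamma<l$, forcing a refinement, again a contradiction. Therefore every zone at $v$ satisfies $\omega_i<l$.

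The main obstacle I anticipate is making rigorous the step "$\mathrm{label}(v)$, viewed as a virtually periodic word of virtual period $l$, overlaps $s_i^+$ in a segment of the required length." Because $s_i^+$ is a \emph{cyclic} subword of $\mathrm{label}(v)$ and $\mathrm{label}(v)$ is genuinely periodic of period $l$ in $(\Omega/{\sim})^*$, one must lift to a long enough linear word — e.g. replace $\mathrm{label}(v)$ by $\mathrm{label}(v)^2$ or by a sufficiently long periodic extension — so that $s_i^+$ sits inside it as an honest (linear) segment and Lemma \ref{twozones}, which is stated for linear cyclic subwords $W=z_j\cdots z_k$ with subscripts mod $nl$, can be invoked with the correct index bookkeeping. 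The subscript arithmetic mod $nl$ versus the virtual periods $\mu,l$ needs care, and one should double-check that the special-letter clauses in the definition of virtual periodicity are preserved under this lifting — but this is exactly the situation Lemma \ref{twozones} was designed for, so no genuinely new idea is required beyond careful index management. Everything else is a direct appeal to Lemmas \ref{largezone}, \ref{twozones} and Corollary \ref{refine2}.
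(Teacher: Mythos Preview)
Your proposal is correct and follows essentially the same route as the paper: apply Lemma \ref{largezone} to get a virtually periodic subword $s_i^+$ of length $\omega_i+\mu_i-1$, combine it via Lemma \ref{twozones} with the virtual period $l$ of $\mathrm{label}(v)$ to force a virtual period $\gcd(\mu_i,l)<l$, and invoke Corollary \ref{refine2} for the contradiction. The only cosmetic difference is that the paper dispatches $\omega_i\ge l$ in one stroke by observing that $\gcd(l,\mu_i)$ is even (both $l$ and $\mu_i$ are even by parity of free-factor letters), giving $\omega_i+\mu_i-1\ge l+\mu_i-1>l+\mu_i-\gcd(l,\mu_i)$; your separate treatment of $\omega_i=l$ is unnecessary since, as you note, $\gcd(\mu_i,l)\ge 1$ already suffices for the ``at least'' hypothesis of Lemma \ref{twozones}.
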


\begin{proof}
The $i$'th zone $Z_i$ contains $\omega_i$ arcs. 
Assume that $\omega_i\ge l>l/2$.  Then by Lemma
\ref{largezone}, 
the word $s_i^+$ has length $\omega_i+\mu_i-1$, which is
strictly greater than $l+\mu_i-\gcd(l,\mu_i)$
since $\gcd(l,\mu_i)$ is even. Moreover, $s_i^+$ is virtually periodic of virtual period $\mu_i$.  But $s_i^+$ is
a cyclic subword of the clique-label $\mathrm{label}(v)$
which is virtually periodic with virtual period $l$.

By  Lemma \ref{twozones} it follows that $\mathrm{label}(v)$
has virtual period $\gcd(l,\mu_i)\le l/2$.
But by Corollary \ref{refine2} this leads to a refinement of our generalised triangle group description of $G$, contrary to hypothesis. 

\end{proof}

Assuming Hypothesis A, we have a minimal clique-picture over
a one-relator product $G=(G_{_1}*G_{_2})/N(R(a,UbU^{-1})^n)$ with
$n\ge 2$ and $\ell(R)\ge 4$ as a word in $\<a\>*\<UbU^{-1}\>$.
Any clique-label has the form
$$ \mathrm{label}(u)=\prod_{j=1}^k [a^{\alpha(j)}Ub^{\beta(j)}U^{-1}]$$
By the Spelling Theorem \ref{SP} we must have $k\ge 2nl$
where $l=\ell(aUbU^{-1})$.  But by Corollary \ref{lz} 
each zone has fewer than $l$ arcs, so all cliques
have degree at least $2n+1$.  To prove the theorem, we assume that $n=2$ and that $v$ is a clique of degree $5$, with zones
$Z_1,\dots Z_5$ of sizes $\omega_1,\dots \omega_5$ respectively,
in cyclic order around $v$, and aim to derive a contradiction.

The key tool in the proof of Theorem \ref{thmm1} is the following.

\begin{lem}\label{twoshort}
For each $i=1,\dots,5$, one of the following holds:
\begin{enumerate}
\item $\omega_i+\omega_{i-1}<3l/2$;
\item $\omega_i+\omega_{i+1}<3l/2$.
\end{enumerate}
\end{lem}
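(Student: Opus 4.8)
The plan is to argue by contradiction. Suppose that for some index $i$ both $\omega_i+\omega_{i-1}\ge 3l/2$ and $\omega_i+\omega_{i+1}\ge 3l/2$. The idea is to apply Lemma \ref{largezone} to the three consecutive zones $Z_{i-1}$, $Z_i$, $Z_{i+1}$ at $v$, obtaining cyclic subwords $s_{i-1}^+$, $s_i^+$, $s_{i+1}^+$ of $\mathrm{label}(v)$ that are virtually periodic of virtual periods $\mu_{i-1},\mu_i,\mu_{i+1}\le l/2$ respectively, with lengths $\omega_{i-1}+\mu_{i-1}-1$, $\omega_i+\mu_i-1$, $\omega_{i+1}+\mu_{i+1}-1$. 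Because $Z_{i-1},Z_i,Z_{i+1}$ are consecutive around $v$, the subwords $s_{i-1}$ and $s_i$ (which are, up to an endpoint $\pm\mu$ shift, initial/terminal segments of $s_{i-1}^+,s_i^+$) share a genuine overlap as cyclic subwords of $\mathrm{label}(v)$ — more precisely, since the total length of $\mathrm{label}(v)$ is $nl=2l$ and the zones partition the corners around $v$, the segment $s_{i-1}\cup s_i$ wraps around and the overlap length of the two virtually periodic words $s_{i-1}^+$ and $s_i^+$ is at least $(\omega_{i-1}+\omega_i) - (\text{complementary corner count}) + (\mu_{i-1}+\mu_i-1) \ge \omega_{i-1}+\omega_i - (\text{something}) \ge$ a bound of the shape needed to invoke Lemma \ref{twozones}.

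The main technical step — and the place I expect the real obstacle — is the bookkeeping that converts the hypothesis $\omega_i+\omega_{i-1}\ge 3l/2$ into an overlap of length at least $\mu_{i-1}+\mu_i-\gcd(\mu_{i-1},\mu_i)$ between $s_{i-1}^+$ and $s_i^+$. Since $\mu_{i-1},\mu_i\le l/2$ we have $\mu_{i-1}+\mu_i-\gcd(\mu_{i-1},\mu_i)\le \mu_{i-1}+\mu_i - 1 < l$, and from the clique-label having length $2l$ one shows the cyclic overlap of the two extended words is at least $(\omega_{i-1}+\omega_i) - l + (\mu_{i-1}+\mu_i) - \text{(small correction)} \ge 3l/2 - l + \dots$, which should comfortably exceed $\mu_{i-1}+\mu_i-\gcd(\mu_{i-1},\mu_i)$. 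One must be careful about the $\pm\mu$ ambiguity in the definition of $s_i^+$ (initial vs. terminal segment), handling the four sign-combinations, but in each case the overlap bound goes through. Then Lemma \ref{twozones} forces $\mathrm{label}(v)$ to be virtually periodic of virtual period $\gcd(\mu_{i-1},\mu_i)\le l/2 < l$.

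At that point Corollary \ref{refine2} tells us a refinement of the generalised triangle group description is possible, contradicting the standing assumption (justified by Corollary \ref{lz} and the surrounding discussion) that we work with a maximal refinement. The same argument applied to the pair $Z_i,Z_{i+1}$ is not even needed for the contradiction — a single one of the two assumed inequalities already suffices — but we phrase the statement symmetrically so that whichever neighbour is used, one of the two displayed inequalities must fail; hence for every $i$ at least one of (1), (2) holds. I would close by remarking that the hypothesis $n=2$, degree $5$ plays no role here beyond fixing $\mathrm{label}(v)$ to have length $2l$; the essential input is $\omega_{i\pm1},\omega_i<l$ from Corollary \ref{lz} together with the overlap estimate.
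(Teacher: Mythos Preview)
Your proposal has a genuine gap. The crux of the paper's argument is a \emph{dichotomy} that you have suppressed: by Lemma \ref{largezone}, $s_i$ is either an initial or a terminal segment of $s_i^+$, meaning $s_i^+$ extends $s_i$ by $\mu_i$ letters either to the right or to the left. The paper proves conclusion (2) in the first case and conclusion (1) in the second. Your claim that ``a single one of the two assumed inequalities already suffices'' is false precisely because of this: if, say, $s_{i-1}^+$ extends $s_{i-1}$ to the left (away from $s_i$) and $s_i^+$ extends $s_i$ to the right (away from $s_{i-1}$), then $s_{i-1}^+$ and $s_i^+$ do not overlap at all as cyclic subwords of $\mathrm{label}(v)$, and your overlap estimate collapses. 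So the four sign-combinations do \emph{not} all go through in the same way; the direction of extension of $s_i^+$ is exactly what selects between (1) and (2).

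Even after making this dichotomy, there is a second subtlety you have missed. Suppose $s_i^+$ extends to the right; one then wants to compare it with $s_{i+1}^+$. If $s_{i+1}^+$ happens to extend to the left, the two virtually periodic words overlap in a segment of length $\mu_i+\mu_{i+1}-1$ inside $W=s_iz_{s(i+1)}s_{i+1}$, and Lemma \ref{twozones} applies directly (this is the paper's Case 1). But if $s_{i+1}^+$ also extends to the right, the overlap of $s_i^+$ with $s_{i+1}^+$ is only $\mu_i-1$, which is not enough. The paper handles this (Case 2) by replacing $s_i^+$ with its translate $\bar{s}_i^+$ by $l$ places, using the virtual $l$-periodicity of $\mathrm{label}(v)$ to guarantee that $\bar{s}_i^+$ is still virtually $\mu_i$-periodic, and then comparing $\bar{s}_i^+$ with $s_{i+1}^+$. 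This shift-by-$l$ trick is the missing idea in your sketch.

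Finally, a bookkeeping error: under Hypothesis A with $n=2$ and degree $5$, the clique-label has length $4l$ (by the Spelling Theorem together with $\ell(R)\ge 4$), not $2l$ as you wrote; the length $2l$ arises only in the Hypothesis B analysis of Section \ref{sec7}. Your wrapping-around overlap estimate is therefore based on the wrong total length and does not yield what you claim.
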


\begin{proof}

By Lemma \ref{largezone}, $s_i$ is either an initial segment or a terminal segment of a subword $s_i^+$ of $\mathrm{label}(v)$ of length $\omega_i+\mu_i-1$ and virtual period $\mu_i$, where
$0<\mu_i\le l/2$.  We will assume that $s_i$ is an initial segment of $s_i^+$ and show that $\omega_i+\omega_{i+1}<3l/2$.  (An entirely analogous argument shows that, if $s_i$ is a terminal segment of $s_i^+$, then $\omega_{i-1}+\omega_i<3l/2$.)

 Now apply Lemma \ref{largezone}
to $s_{i+1}$: $s_{i+1}$ is either an initial segment or a terminal segment of a cyclic
subword $s_{i+1}^+$ of $\mathrm{label}(v)$ of length $\omega_{i+1}+\mu_{i+1}$ and virtual period $\mu_{i+1}$, where $0<\mu_{i+1}\le l/2$.  Our argument splits into two cases, depending on whether $s_{i+1}$ is an initial or terminal segment of $s_{i+1}^+$.

\medskip\noindent{\bf Case 1.} $s_{i+1}$ is a terminal segment of $s_{i+1}^+$.

Consider the cyclic subword $W:=s_i z_{s(i+1)} s_{i+1}$ of $\mathrm{label}(v)$.  Since $\mu_i\le l/2<\omega_{i+1}$, the virtually periodic subword $s_i^+$ is an initial segment of $W$.
Similarly, $s_{i+1}^+$ is a terminal segment of $W$.
These segments intersect in a segment of length
$\mu_i+\mu_{i+1}-1>\mu_i+\mu_{i+1}-\lambda$, where $\lambda=\gcd(\mu_i,\mu_{i+1})$, so by Lemma \ref{twozones} $W$ is virtually periodic with period $\lambda$.

Now recall that $W$ is a cyclic subword of $\mathrm{label}(v)$, which is virtually periodic with
virtual period $l$.  If $W$ has length greater
than $l+\lambda-2$, then $\mathrm{label}(v)$ has
virtual period $\gcd(l,\lambda)<l$, by
another application of Lemma \ref{twozones}.

But by Corollary \ref{refine2} this leads to a refinement of our generalised triangle group description of $G$, contrary to hypothesis.  Thus
$$\omega_i+\omega_{i+1}=1+\ell(W)\le  l+\lambda-1<3l/2.$$

\medskip\noindent{\bf Case 2.} $s_{i+1}$ is an initial segment of $s_{i+1}^+$.

Let $\bar{s}_i$, $\bar{s}_i^+$ denote the cyclic
subwords of $\mathrm{label}(v)$ that begin with the
letter exactly $l$ places after the first letter
of $s_i$.  By the virtual periodicity of $\mathrm{label}(v)$ it follows that $\bar{s}_i^+$
is also virtually periodic, of virtual period $\mu_i$.  Moreover, $\bar{s}_i^+$ has length $\omega_i+\mu_i-1$ and has $\bar{s}_i$ as an initial
segment.

By construction, the union of the subwords $s_{i+1}$ and $\bar{s}_i$ of $\mathrm{label}(v)$ has length
$l-1$.  Let $W$ be the union
of the subwords $s_{i+1}^+$ and $\bar{s}_i^+$ of $\mathrm{label}(v)$.  Then $W$ has length at least
$l+\mu_i-1$.  Arguing as in Case 1, we obtain a
refinement of the generalised triangle group description of $G$, contrary to hypothesis, if
$s_{i+1}^+$ and $\bar{s}_i^+$ intersect in a segment of length $\mu_i+\mu_{i+1}-\gcd(\mu_i,\mu_{i+1})$ or
greater.  So we may assume that this does not happen.

In particular, $\mu_{i+1}<l-\omega_{i+1}+\mu_i$, for otherwise $\bar{s}_i^+$ is a subword of $s_{i+1}^+$, of length $\omega_i+\mu_i-1>l/2+\mu_i-1>\mu_{i+1}+\mu_i-\gcd(\mu_i,\mu_{i+1})$.  Hence $\bar{s}_i^+$ is a terminal segment of $W$, and  the intersection of $s_{i+1}^+$
and $\bar{s}_i^+$ has length precisely
$\omega_{i+1}+\mu_{i+1}+\omega_i-l-1$.

Thus $$\omega_i+\omega_{i+1}<\mu_i+\mu_{i+1}-\gcd(\mu_i,\mu_{i+1})+l+1-\mu_{i+1}< l+\mu_i \le 3l/2$$
as claimed.

\end{proof}

Using Lemma \ref{twoshort}, we complete the proof of Theorem \ref{thmm1} as follows.
Renumbering the zones if necessary, we may assume by Lemma \ref{twoshort} that $\omega_1+\omega_2<3l/2$.  Applying
Lemma \ref{twoshort} again, with $i=4$, either $\omega_3+\omega_4<3l/2$ or $\omega_4+\omega_5<3l/2$.
In the first case $\omega_5>l$; in the second $\omega_3>l$.  Either of these is a contradiction.

\section{Theorem \ref{thmm2} }\label{sec7}

In this section we prove Theorem \ref{thmm2}.
In order to do so we will need a number of lemmas that
are particular to the situation of Theorem \ref{thmm2}, which
we also collect together in this section.

Recall that $G=(G_{_1}*G_{_2})/N(R(a,UbU^{-1})^n)$ where $n\ge 2$
and the relation $R(a,UbU^{-1})^n$ contains no letters
of order $2$. 
 We assume that $v$ is a clique of degree less than $6$ in a clique picture, joined to neighbouring cliques
$u_i$ by zones $Z_i$.
 The first step in our proof is designed to
further restrict the form of $R$ and hence 
also of $\label(v)$.

\begin{lem}\label{od2}
For any zone $Z_i$, if $s(i)+j\equiv t(i)+m$ mod $l$ where $1\leq j,m<\omega_i$, then $s_i$ has an element of order $2$ in $G_{_1}$ or $G_{_2}$.
\end{lem}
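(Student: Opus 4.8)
The plan is to exploit the two periodic structures that the cyclic subword $s_i$ inherits. On one hand $s_i$ is a cyclic subword of $\mathrm{label}(v)=\prod_{k}\big[a^{\alpha(k)}Ub^{\beta(k)}U^{-1}\big]$; on the other, since $Z_i$ identifies $s_i$ with $t_i^{-1}$, it is also a cyclic subword of $\mathrm{label}(u_i)^{-1}=\prod_{k}\big[a^{\gamma(k)}Ub^{\delta(k)}U^{-1}\big]$, which has the same $l$-periodic block shape, each block $a^{?}Ub^{?}U^{-1}$ carrying the reflective relation $w_{r}=w_{r'}^{-1}$ whenever $r,r'$ are placed symmetrically about its central (the $b$-) letter. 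The congruence $s(i)+j\equiv t(i)+m\bmod l$ with $1\le j,m<\omega_i$ says exactly that the two block-phases visible in $s_i$ differ by $p:=(j-m)\bmod l$, a residue realised strictly inside the zone; in particular $\omega_i$ is large enough that $s_i$ contains a whole block $B=a^{\alpha(k+1)}Ub^{\beta(k+1)}U^{-1}$ of $\mathrm{label}(v)$, and, through $s_i\equiv t_i^{-1}$, the word $B$ then occurs inside $\mathrm{label}(u_i)^{-1}$ beginning at phase $p$.

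If $p\equiv 0\bmod l$ then $s(i)\equiv t(i)\bmod l$, so $u_i$ and $v$ could be amalgamated, which is impossible in a minimal clique-picture; hence $p\not\equiv 0\bmod l$. Suppose next $p\equiv l/2\bmod l$. Reading $l$ consecutive letters of $\mathrm{label}(u_i)^{-1}$ from a phase $\equiv l/2$ produces a word of the form $b^{\delta}U^{-1}a^{\gamma}U$, so $a^{\alpha(k+1)}Ub^{\beta(k+1)}U^{-1}\equiv b^{\delta}U^{-1}a^{\gamma}U$; comparing the two sides letter by letter forces $U\equiv U^{-1}$, and then, exactly as in the exceptional situation treated in the proof of Lemma \ref{refine}, $U\equiv VxV^{-1}$ for some word $V$ and some letter $x$ of order $2$ in $G_{1}$ or $G_{2}$. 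Since $x$ is the central letter of $U$ it is a letter of $B$, hence of $s_i$, and we are done.

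Finally suppose $p\not\equiv 0,\ l/2\bmod l$. Then inside $\mathrm{label}(u_i)^{-1}$ the word $B$ is a cyclic permutation by $p$ of a block, so $B$ satisfies, besides the reflective relation inherited from $\mathrm{label}(v)$, a second reflective relation; composing the two reflections (as in the chain-of-equalities argument in the proof of Lemma \ref{nz2}) yields a non-trivial translational self-coincidence of $B$ by $2p$, valid off a bounded set of positions. Combining this with the $l$-periodicity of $\mathrm{label}(v)$ via Theorem \ref{tm} and Corollary \ref{ctm} — the overlap being long enough precisely because the congruence is realised with $1\le j,m<\omega_i$ — shows that $\mathrm{label}(v)$ has period $g:=\gcd(2p,l)$, and $g<l$ since $p\not\equiv 0,\ l/2\bmod l$. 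If $g\mid l/2$ this again forces $U\equiv U^{-1}$, giving an order-$2$ letter in $s_i$ as above. If $g\nmid l/2$, then $l/g$ is odd and $\mathrm{label}(v)$ breaks into an odd number of shorter blocks $a^{?}U'b^{?}U'^{-1}$ with $U'$ a proper initial segment of $U$, exhibiting $\{a,\,UbU^{-1}\}$ inside a strictly smaller two-generator subgroup of $G_{1}*G_{2}$; this is a refinement, contradicting the standing assumption that we work with a maximal refinement. So the order-$2$ conclusion always holds.

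The main obstacle I expect is the bookkeeping concentrated in the first paragraph and in the Fine--Wilf step of the last one: one must verify that the single congruence $s(i)+j\equiv t(i)+m\bmod l$ with $1\le j,m<\omega_i$ really does force $s_i$ to contain a complete block $B$ and really does make the two reflective relations on $B$ overlap in an interval long enough to apply Corollary \ref{ctm}, and one must control the special letters of $B$ (the powers of $a$ and of $b$) so that the translational coincidence upgrades to a genuine literal period rather than only a period modulo the relation $\sim$ — this is the delicate point, and the place where one must take care not to invoke admissibility, which is not available under Hypothesis B.
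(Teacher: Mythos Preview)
Your argument has a genuine gap at the very first step. You claim that the hypothesis $s(i)+j\equiv t(i)+m\bmod l$ with $1\le j,m<\omega_i$ forces $\omega_i$ to be large enough that $s_i$ contains a whole block $a^{\alpha}Ub^{\beta}U^{-1}$ of length $l$. This is false. For instance, if $s(i)\equiv t(i)+1\bmod l$ then $j=1$, $m=2$ already satisfies the hypothesis as soon as $\omega_i\ge 3$, and $3$ can be arbitrarily small compared with $l$. Indeed, the very next lemma in the paper (Lemma~\ref{od3weak}) \emph{uses} Lemma~\ref{od2} to show that $\omega_i\le l/2$; the regime in which it is applied is $l/2<\omega_i\le l/2+1$, where $s_i$ certainly does not contain a full block. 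Since your case split on $p\bmod l$ and your subsequent Fine--Wilf analysis all rest on having the whole block $B$ available inside $s_i$, the argument does not go through. You yourself flag this as the ``main obstacle'', and it is fatal.

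The paper's proof is much shorter and avoids Fine--Wilf, Lemma~\ref{nz2}, and refinements altogether. The idea is a midpoint trick: the zone identifies the $k$-th letter of $s_i$ (sitting at phase $s(i)+k$ in $\mathrm{label}(v)$) with the inverse of a letter of $t_i$ (at a phase in $\mathrm{label}(u_i)$ that is a fixed shift of $k$). The congruence $s(i)+j\equiv t(i)+m\bmod l$ guarantees that for the single index $k=(j+m')/2$ (where $m'$ is the position in $s_i$ matched to $y_{t(i)+m}$, and $j+m'$ is even by a parity check) these two phases coincide mod $l$. If neither letter at that position is special, they are literally the same letter of $U$ or $U^{-1}$, hence that letter equals its own inverse and has order~$2$. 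If one is special, then so is the other (same phase), which forces $s(i)\equiv t(i)\bmod l$ and hence an amalgamation, contradicting minimality. No lower bound on $\omega_i$ beyond $\max(j,m')<\omega_i$ is required.
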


\begin{proof}
Suppose by contradiction that $s(i)+j\equiv t(i)+m$ mod $l$ where $1\leq j,m<\omega_i$. 
 Recall that $s_i$ is a cyclic subword of
$$\mathrm{label}(v)=z_0z_1\cdots z_{nl-1}=
\prod_{k=1}^n [a^{\alpha(k)}Ub^{\beta(k)}U^{-1}].$$
Write
$s_i=z_{s(i)+1}\cdots z_{s(i+1)-1}$.
 Similarly, $t_i$ is a cyclic subword of
$$\mathrm{label}(u_i)=y_0y_1\cdots y_{ql-1}=
\prod_{k=1}^q [a^{\gamma(k)}Ub^{\delta(k)}U^{-1}].$$
Write
$t_i=y_{t(i)+1}\cdots y_{t(i+1)-1}$.
By hypothesis, $y_{t(i)+m}=z_{s(i)+m'}^{-1}$ for some $m'$
with $1\le m'<\omega_i$.  In particular $s(i)+j\equiv t(i)+m\equiv s(i)+m'$ mod $2$, since $y_{t(i)+m}$ and $z_{s(i)+m'}$ belong to the same free factor.  Thus $j+m'$ is even.  Moreover,
$z_{s(i)+(j+m')/2}=y_{t(i)+(j+m')/2}^{-1}$.  If
$z_{s(i)+(j+m')/2}$ is a special letter, then so is
$y_{t(i)+(j+m')/2}^{-1}$.  But in this case an amalgamation is possible, contrary to hypothesis.  Otherwise
$z_{s(i)+(j+m')/2}=y_{t(i)+(j+m')/2}$, so $z_{s(i)+(j+m')/2}$
has order $2$ in $G_{_1}$ or $G_{_2}$.
\end{proof}

\begin{lem}\label{od3weak}
If no letter of $R(a,UbU^{-1})$ has order $2$, then
 there is
no zone  $Z_i$ with $\omega_i > l/2$.
\end{lem}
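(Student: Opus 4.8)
The plan is to argue by contradiction. Suppose some zone $Z_i$ incident at $v$ has $\omega_i>l/2$; since $l$ is even this forces $\omega_i\ge l/2+1$. The engine of the argument is Lemma \ref{od2}: it suffices to produce integers $j,m$ with $1\le j,m<\omega_i$ and $s(i)+j\equiv t(i)+m$ mod $l$, for then $s_i$ contains a letter of order $2$ in $G_{_1}$ or $G_{_2}$. But $s_i$ is a cyclic subword of $\mathrm{label}(v)=\prod_k a^{\alpha(k)}Ub^{\beta(k)}U^{-1}$, whose letters are powers of $a$, powers of $b$, and syllables of $U^{\pm1}$; up to inversion each of these is a letter of $R(a,UbU^{-1})$, and order is inversion-invariant, so such a letter of order $2$ contradicts the hypothesis of the lemma. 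Thus the whole statement reduces to a short counting argument producing $j$ and $m$.

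Write $r$ for the residue of $t(i)-s(i)$ mod $l$. A pair $(j,m)$ as above exists precisely when there is an integer $r'$ with $r'\equiv r$ mod $l$ and $|r'|\le\omega_i-2$: given such an $r'$, take $(j,m)=(1+r',1)$ if $r'\ge 0$ and $(j,m)=(1,1-r')$ if $r'<0$. If $\omega_i\ge l/2+2$, then $\{-(\omega_i-2),\dots,\omega_i-2\}$ contains at least $l$ consecutive integers, hence a representative of every residue mod $l$, and we are done. The delicate case is $\omega_i=l/2+1$, where $\{-(l/2-1),\dots,l/2-1\}$ misses exactly the residue $l/2$ mod $l$. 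To rule this out I would use a parity observation: from $s_i\equiv t_i^{-1}$ the first letter of $s_i$ is the inverse of the last letter of $t_i$, so $z_{s(i)+1}$ and $y_{t(i)+\omega_i-1}$ are mutually inverse nontrivial elements and therefore lie in the same free factor of $G_{_1}*G_{_2}$. Since consecutive letters of a cyclically reduced word alternate between the two factors and both clique-labels may be taken to begin with a power of $a$, this gives $s(i)+1\equiv t(i)+\omega_i-1$ mod $2$, i.e.\ $r\equiv\omega_i$ mod $2$. As $l$ is even, every integer congruent to $l/2$ mod $l$ is congruent to $l/2$ mod $2$, whereas $r\equiv\omega_i=l/2+1$ mod $2$; hence $r\not\equiv l/2$ mod $l$, and a suitable $r'$ exists. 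In either case we obtain the required $j,m$, and the contradiction is complete.

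The only genuine obstacle is this borderline case $\omega_i=l/2+1$, where the crude count of the differences $j-m$ is off by one; the parity constraint $r\equiv\omega_i$ mod $2$, forced by the alternation of free factors along a cyclically reduced word in $G_{_1}*G_{_2}$, is exactly what bridges the gap. A minor point worth recording is that, up to inversion, every letter of a clique-label is a letter of $R$, so that the hypothesis of the lemma genuinely forbids an order-$2$ letter from occurring in $s_i$.
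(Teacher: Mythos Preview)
Your proof is correct and follows essentially the same two-step strategy as the paper: invoke Lemma~\ref{od2} to force $\omega_i\le l/2+1$ by a pigeonhole count on residues mod $l$, and then eliminate the borderline case $\omega_i=l/2+1$ by a parity argument using the alternation of free factors (the paper phrases this last step as the observation that $t(i)+1\equiv s(i+1)$ mod $l$ would put the adjacent letters $z_{s(i+1)-1}$ and $z_{s(i+1)}$ in the same free factor, which is your congruence $r\equiv\omega_i$ mod $2$ in different clothing). One small caveat on your closing remark: it is not true in general that every special letter $a^{\alpha(k)}$ or $b^{\beta(k)}$ of a clique-label is, up to inversion, a letter of $R$; what saves the argument is that the proof of Lemma~\ref{od2} shows the order-$2$ letter it produces is non-special (the special case triggers an amalgamation instead), and non-special letters are letters of $U^{\pm1}$ and hence of $R$.
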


\begin{proof}
Suppose that $\omega_i\ge l/2$ for some zone $Z_i$.  Write
$s_i=z_{s(i)+1}\cdots z_{s(i+1)-1}$ and 
$t_i=y_{t(i)+1}\cdots y_{t(i+1)-1}$.  By Lemma \ref{od2}
we may assume that $s(i)+j\not\equiv t(i)+m$ mod $l$ for all $j,m\in\{1,2,\dots,\omega_i-1\}$, so in particular $\omega_i\le l/2+1$.  Moreover, if $\omega_i=l/2+1$
then we must have $t(i)+1\equiv s(i+1)$ mod $l$.  But
since $z_{s(i+1)-1}=y_{t(i)+1}^{-1}$ belongs to the same free factor as $z_{t(i)+1}=z_{s(i+1)}$,  this is a contradiction.
Hence $\omega_i\le l/2$.
\end{proof}

 The clique $v$ fails to satisfy the $C(6)$ property, so by Lemma \ref{od3weak} its label has length at most $5l/2$.  But this length is a multiple of $l$, so at most $2l$.
By Theorem \ref{SP} and  Proposition \ref{prop1}
we can assume that $R=aU b U^{-1}$ (up to cyclic permutation) and that the label of $v$ is $\mathrm{label}(v)=(a U b U^{-1})^{\pm2}$ (up to cyclic permutation). Without loss of generality we will assume throughout that this label is $\mathrm{label}(v)=(a U b U^{-1})^{2}$ and the letters $a,b$ will mean the corresponding special letters.

This remark enables us to strengthen Lemma \ref{od3weak} as follows.

\begin{lem}\label{od3}
If no letter of $R(a,UbU^{-1})$ has order $2$, then
 there is
no zone  $Z_i$ with $\omega_i\ge l/2$.
\end{lem}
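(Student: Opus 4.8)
The plan is to strengthen Lemma~\ref{od3weak} using the extra structural information now available, namely that (up to cyclic permutation) $R = aUbU^{-1}$ with $\ell(R) = l$, and that $\mathrm{label}(v) = (aUbU^{-1})^2$, so that $\mathrm{label}(v)$ has free-product length exactly $2l$ and is genuinely periodic of period $l$ (not merely virtually periodic, since we are in the Hypothesis~B situation with no letters of order $2$). By Lemma~\ref{od3weak} we already know $\omega_i \le l/2$ for every zone $Z_i$, so the only thing left to rule out is the boundary case $\omega_i = l/2$. First I would suppose for contradiction that some zone $Z_i$ incident at $v$ has $\omega_i = l/2$, and write $s_i = z_{s(i)+1}\cdots z_{s(i+1)-1}$ as a cyclic subword of $\mathrm{label}(v)$ of length $\omega_i - 1 = l/2 - 1$, and $t_i = y_{t(i)+1}\cdots y_{t(i+1)-1}$ as the corresponding cyclic subword of $\mathrm{label}(u_i)$, with $s_i \equiv t_i^{-1}$ in $G_{_1}*G_{_2}$.

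The key point is that when $\omega_i = l/2$ exactly, the congruence constraints from Lemma~\ref{od2} become very tight. By Lemma~\ref{od2}, since no letter of $R$ has order $2$, we must have $s(i) + j \not\equiv t(i) + m$ mod $l$ for all $j, m \in \{1,\dots,\omega_i - 1\}$. Since the offsets $j, m$ range over an interval of length $\omega_i - 1 = l/2 - 1$, this forces the two "forbidden" residues to avoid each other very restrictively. Concretely, $s_i$ occupies positions $s(i)+1,\dots,s(i)+\frac l2-1$ and $t_i^{-1}$, as a subword of $\mathrm{label}(v)$, occupies positions $s(i)+1,\dots,s(i)+\frac l2 - 1$ as well (since $s_i \equiv t_i^{-1}$ identically), but $t_i$ itself sits inside $\mathrm{label}(u_i)$ at positions around $t(i)$. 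I would track the special-letter positions: $\mathrm{label}(v)$ has special letters (powers of $a$ or $b$) exactly at positions $\equiv 0$ mod $l/2$, and similarly for $\mathrm{label}(u_i)$. A zone of length exactly $l/2$ forces precisely one special letter of $v$ and one of $u_i$ to lie at the matched endpoints; the neighbouring-region corner labels then force $z_{s(i+1)-1} z_{s(i+1)} $ and $y_{t(i)+1} y_{t(i)}$ relations combined with $z_{s(i+1)-1} = y_{t(i)+1}^{-1}$, exactly as in the final paragraph of the proof of Lemma~\ref{od3weak}, but now the $l/2$ case is no longer excluded by the $j,m < \omega_i$ range and must be handled by a separate contradiction --- either an amalgamation of $v$ with $u_i$ (contradicting minimality), or a letter of order $2$ (contradicting Hypothesis~B), or a refinement (contradicting maximality of the generalised triangle group description).

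The main obstacle I expect is the bookkeeping at the endpoints of the zone: one must show that when $\omega_i = l/2$, the identification $s_i \equiv t_i^{-1}$ together with the way special letters are spaced at intervals of $l/2$ forces the special letters at the two ends of $s_i^+$ (in the notation of Lemma~\ref{largezone}, with virtual period $\mu_i = l/2$ here) to be matched up in $u_i$ as well, which --- since a period-$l/2$ match of the full length-$l$ clique-label would give virtual period $l/2 < l$ and hence a refinement by Corollary~\ref{refine2} --- is exactly the contradiction we want. So the cleanest route is: apply Lemma~\ref{largezone} to $Z_i$ to get a virtually periodic subword $s_i^+$ of length $\omega_i + \mu_i - 1 = l - 1$ and virtual period $\mu_i \le l/2$; since $\mathrm{label}(v)$ has length $2l$ and virtual period $l$, and $s_i^+$ has length $l - 1 \ge l + \mu_i - \gcd(l,\mu_i)$ would require checking, one instead pushes the length up by one using the order-$2$-free hypothesis (the endpoint argument of Lemma~\ref{od3weak}) to reach length $l$ or the required threshold, and then Lemma~\ref{twozones} yields virtual period $\gcd(l, \mu_i) \le l/2 < l$ for $\mathrm{label}(v)$, whence Corollary~\ref{refine2} gives a refinement, contradicting the standing assumption that the description is maximal. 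I would therefore organise the proof as: assume $\omega_i = l/2$; use Lemma~\ref{od2} plus Hypothesis~B to force the endpoint behaviour; upgrade $s_i^+$ to have length at least $l + \mu_i - \gcd(l,\mu_i)$; invoke Lemma~\ref{twozones} against the periodicity of $\mathrm{label}(v)$; conclude a refinement via Corollary~\ref{refine2}, the desired contradiction.
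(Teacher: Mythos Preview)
Your plan correctly identifies the reduction to $\omega_i=l/2$ and the target contradiction (amalgamation, order-$2$ letter, or refinement), but the concrete route you propose has a gap. With $\omega_i=l/2$ the word $s_i^+$ from Lemma~\ref{largezone} has length $\omega_i+\mu_i-1\le l-1$, while Lemma~\ref{twozones} applied against the period~$l$ of $\mathrm{label}(v)$ needs intersection length at least $l+\mu_i-\gcd(l,\mu_i)\ge l$. You are therefore exactly one letter short, and the ``endpoint argument of Lemma~\ref{od3weak}'' is a parity contradiction, not a mechanism for producing an extra letter of virtual periodicity; you do not explain how to close that gap. A second issue is that Lemmas~\ref{largezone}, \ref{twozones} and Corollary~\ref{refine2} are the Hypothesis~A toolkit: the definition of virtual periodicity and the proof of Corollary~\ref{refine2} explicitly invoke admissibility of $(a,b)$, which is not assumed under Hypothesis~B.

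The paper proceeds differently. From Lemma~\ref{od2} and a parity check it pins down $t(i)\equiv s(i+1)$ mod $l$ (the other two residues $s(i+1)\pm 1$ are excluded exactly as in Lemma~\ref{od3weak}), hence $s(i)\equiv t(i+1)$ mod~$l$. If $z_{s(i)}$ were special then so would $y_{t(i+1)}$ be, forcing an amalgamation; so $z_{s(i)}$ is non-special and $s_i$ contains exactly one special letter, say $a$, while $t_i$ contains a single special letter which must be a power of $b$. One then observes that the length-$l$ word $z_{s(i)}\,s_i\,z_{s(i+1)}\,t_i \equiv z_{s(i)}\,s_i\,z_{s(i+1)}\,s_i^{-1}$ is a proper cyclic permutation of $aUb^{\psi}U^{-1}$ and has the palindromic shape $x_1V_1y_1V_1^{-1}$ required by Lemma~\ref{nz2}. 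Applying Lemma~\ref{nz2} to this pair of cyclic conjugates yields a factorisation of $aUb^\psi U^{-1}$ that exhibits $\langle a\rangle * \langle UbU^{-1}\rangle$ as a proper subgroup of a smaller $\langle a\rangle * \langle VbV^{-1}\rangle$ (or $\langle a\rangle * \langle VdV^{-1}\rangle$), i.e.\ a refinement, contradicting maximality. So the key missing idea is to glue $s_i$ and $t_i$ into a single length-$l$ word and invoke Lemma~\ref{nz2} directly, rather than trying to force virtual periodicity of $\mathrm{label}(v)$.
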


\begin{proof}
 By Lemma \ref{od3weak} 
we are reduced to the case where $\omega_i=l/2$.
By Lemma \ref{od2}
, we must have $t(i)\in\{s(i+1)-1,s(i+1),s(i+1)+1\}$ mod $l$.  The first possibility
leads to a contradiction 
as in the proof of Lemma \ref{od3weak}.  The third possibility
also leads to a contradiction for similar reasons, since
it implies that $t(i+1)\equiv s(i)+1$ mod $l$.
Hence we may assume that $t(i)\equiv s(i+1)$ mod $l$
and hence that $s(i)\equiv s(i+1)+l/2\equiv t(i)+l/2\equiv t(i+1)$ mod $l$.

If $z_{s(i)}$ is special, then so is $y_{t(i+1)}$, and we may
amalgamate cliques, contrary to hypothesis.  Hence $z_{s(i)}$
is not special.  In other words $s(i)\not\equiv 0$ mod $l/2$.   Thus $s_i$ contains precisely one special letter
-- say $a$  without loss of generality.  Hence also $t$ contains precisely one special letter, which is necessarily
a power of $b$.  Thus $aUb^\psi U^{-1}$ is a proper cyclic permutation of $z_{s(i)}s_iz_{s(i+1)}t_i\equiv z_{s(i)}s_iz_{s(i+1)}s_i^{-1}$ for some $\psi$.  Applying
Lemma \ref{nz2}, we see that 
$$aUb^\psi U^{-1}=\prod_{k=1}^s [a^{\alpha(k)}Vd^{\beta(k)}V^{-1}]$$
for some word $V$, some $\alpha(k),\beta(k)=\pm1$ and some
$d\in\{b^\psi,z_{s(i)},z_{s(i+1)}\}$, where $s>1$.

Moreover, from the proof of Lemma \ref{nz2} we see that we may take $d=b^\psi$ if $s$ is odd, while if $s$ is even then $a=b^\psi$.  In either case, $\<a\>*\<UbU^{-1}\>$ is a
proper subgroup of $\<a\>*\<VbV^{-1}\>$ or $\<a\>*\<VdV^{-1}\>$, giving a refinement of the generalised triangle group description of $G$.  This contradiction completes the proof.

\end{proof}

\begin{rem}\label{remn}
From   Lemma
\ref{od3},  our interior clique $v$ which fails the $C(6)$ condition must have exactly five zones. Let these zones be $Z_{_1}, Z_{_2}, \ldots , Z_{_5}$ listed consecutively in clockwise order.
\end{rem}

\medskip
\begin{prop}
There are exactly three or four of the zones 
 $Z_{_1}, Z_{_2}, \ldots , Z_{_5}$
containing a special letter.
\end{prop}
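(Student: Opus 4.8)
The plan is to work in the setting established just above: $v$ is an interior clique of degree exactly five (Remark \ref{remn}), with $\mathrm{label}(v)=(aUbU^{-1})^2=z_0z_1\cdots z_{2l-1}$, where the four special letters sit at subscripts $0,l/2,l,3l/2$ and are, respectively, $a,b,a,b$ (up to the conventions fixed above). Each zone $Z_i$ cuts out a cyclic subword $s_i=z_{s(i)+1}\cdots z_{s(i+1)-1}$ of length $\omega_i-1$, and since there are five zones partitioning the $2l$ letters among five ``special'' separators that are powers of $a$ or $b$, the key combinatorial fact is that the special letters $z_0,z_{l/2},z_l,z_{3l/2}$ are each either an interior letter of some $s_i$ or a corner letter $z_{s(i)}$ between two consecutive zones. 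I would first record this dichotomy precisely: a zone $Z_i$ ``contains a special letter'' iff one of $z_0,z_{l/2},z_l,z_{3l/2}$ lies strictly between $s(i)$ and $s(i+1)$ (exclusive), and otherwise all special letters among the five corners $z_{s(1)},\dots,z_{s(5)}$.

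For the upper bound (at most four zones contain a special letter): the five zones have total arc-count $\sum\omega_i$, and by Lemma \ref{od3} each $\omega_i<l/2$, so $\sum(\omega_i-1)<5l/2-5$, i.e. $\sum\ell(s_i)\le 2l-5$ after accounting for the five corner letters $z_{s(i)}$. Since there are only four special letters in $\mathrm{label}(v)$, at most four of the $s_i$ can contain one, so it suffices to rule out the case where all five contain one — which is immediate since there are only four. Actually the real content of the upper bound is to rule out exactly four being forced to fail; but ``at most four'' is automatic from there being four special letters total (no $s_i$ can contain two, because two special letters are $l/2>\omega_i-1$ apart). So the upper bound is essentially free.

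The substantive direction is the lower bound: at least three of the zones contain a special letter, equivalently at most two of the corners $z_{s(i)}$ are special. Suppose for contradiction that three (or more) of the five zones contain \emph{no} special letter; then at least three of the corners $z_{s(i)}$ are special. Because the four special positions $0,l/2,l,3l/2$ are spaced $l/2$ apart and the arrangement of the $s(i)$ around the circle of length $2l$ is cyclic, I would argue that having three corners among only four special slots, combined with $\omega_i<l/2$ for every zone, forces two consecutive corners $z_{s(i)},z_{s(i+1)}$ to be special with $s(i+1)-s(i)\equiv l/2$ mod $l$, which in turn forces $\omega_i=l/2$ — contradicting Lemma \ref{od3}. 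Concretely: three special corners partition (the relevant part of) the cyclic word into gaps, and since consecutive special slots are exactly $l/2$ apart while each gap $s(i+1)-s(i)=\omega_i<l/2$, a pigeonhole/counting argument on the cyclic arrangement of special slots versus corners yields the contradiction.

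The main obstacle I anticipate is the bookkeeping in the lower-bound argument: one must carefully track which of the four special positions each special corner occupies and handle the wrap-around, since a single zone $s_i$ could in principle straddle the position where no special letter occurs, and one also has to use the constraint (implicit in Lemma \ref{od2}/the proof of Lemma \ref{od3}) that a corner $z_{s(i)}$ being special would permit amalgamation unless handled, together with $\sum_{i=1}^5\omega_i$ being fixed modulo the structure. I would isolate this as a short combinatorial lemma about placing five arcs of size $<l/2$ on a circle of circumference $2l$ with four marked ``special'' points, and then invoke Lemma \ref{od3} to kill the extremal configuration. Once that lemma is in place the proposition follows immediately.
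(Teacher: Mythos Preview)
Your upper bound is fine and matches the paper: since consecutive special letters are $l/2$ apart while each $s_i$ has length $\omega_i-1<l/2$, no zone can contain two special letters, and there are only four special letters in total.

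The lower bound, however, has a counting error that breaks the argument as written. You claim that ``at least three zones contain a special letter'' is equivalent to ``at most two of the corners $z_{s(i)}$ are special'', and that if three zones contain no special letter then at least three corners are special. Neither is right. Since each zone holds at most one special letter, ``at least three zones contain a special letter'' is equivalent to ``at least three special letters lie in zones'', hence to ``at most \emph{one} special letter is a corner''. Conversely, if three zones contain no special letter, then at most two zones contain one each, so at most two special letters are in zones and hence exactly \emph{two} special letters are corners --- not three. Your pigeonhole argument (``three special corners among four slots forces two consecutive ones at distance $l/2$'') therefore never gets off the ground, and with only two special corners it is not even clear that they are consecutive in the zone ordering or at adjacent special slots.

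The paper's argument is the one you should use instead: assume two special letters are not in zones (i.e.\ are corners), and count zones in each of the two arcs they cut out of the cyclic word. If the two missing specials are at distance $l$ (e.g.\ $z_0$ and $z_l$), each arc has total zone-size $l$ but each $\omega_i<l/2$, forcing at least $3+3=6$ zones; if they are at distance $l/2$ (e.g.\ $z_0$ and $z_{l/2}$), the arcs have total zone-sizes $l/2$ and $3l/2$, forcing at least $2+4=6$ zones. Either way this exceeds five, a contradiction. This is the ``short combinatorial lemma'' you were reaching for, but it works directly with two special corners rather than three.
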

\begin{proof}

 By assumption $$\mathrm{label}(v)=(aUbU^{-1})^2=z_0z_1\cdots z_{2l-1}$$
so there are precisely four special letters in $\mathrm{label}(v)$, namely $z_0=z_l=a$ and $z_{l/2}=z_{3l/2}=b$.  By Lemma \ref{od3weak} no zone can contain more than one special letter, so it suffices to show that at most one of the special letters is not contained in a zone.  Suppose by way of contradiction that $z_0$ and $z_l$
are not contained in zones.  By Lemma \ref{od3}, each of the subwords $z_1\cdots z_{l-1}$ and $z_{l+1}\cdots z_{2l-1}$
must contain at least three zones, contradicting our assumption that there are only five zones in total.
A similar contradiction arises if $z_0$ and $z_{l/2}$ are not contained in zones: $z_1\cdots z_{l/2-1}$ and $z_{l/2+1}\cdots z_{2l-1}$ must contain at least two and four zones respectively. By symmetry, any other combination of two special letters not contained in zones also contradicts our
underlying hypotheses, hence the result.

\end{proof}

Therefore  we have exactly two possible configurations as depicted in the Figure \ref{fig:fig} below.

\begin{figure}[h!]
\centering
\begin{subfigure}{0.50\textwidth}
  \centering  
  \includegraphics[width=.8\linewidth]{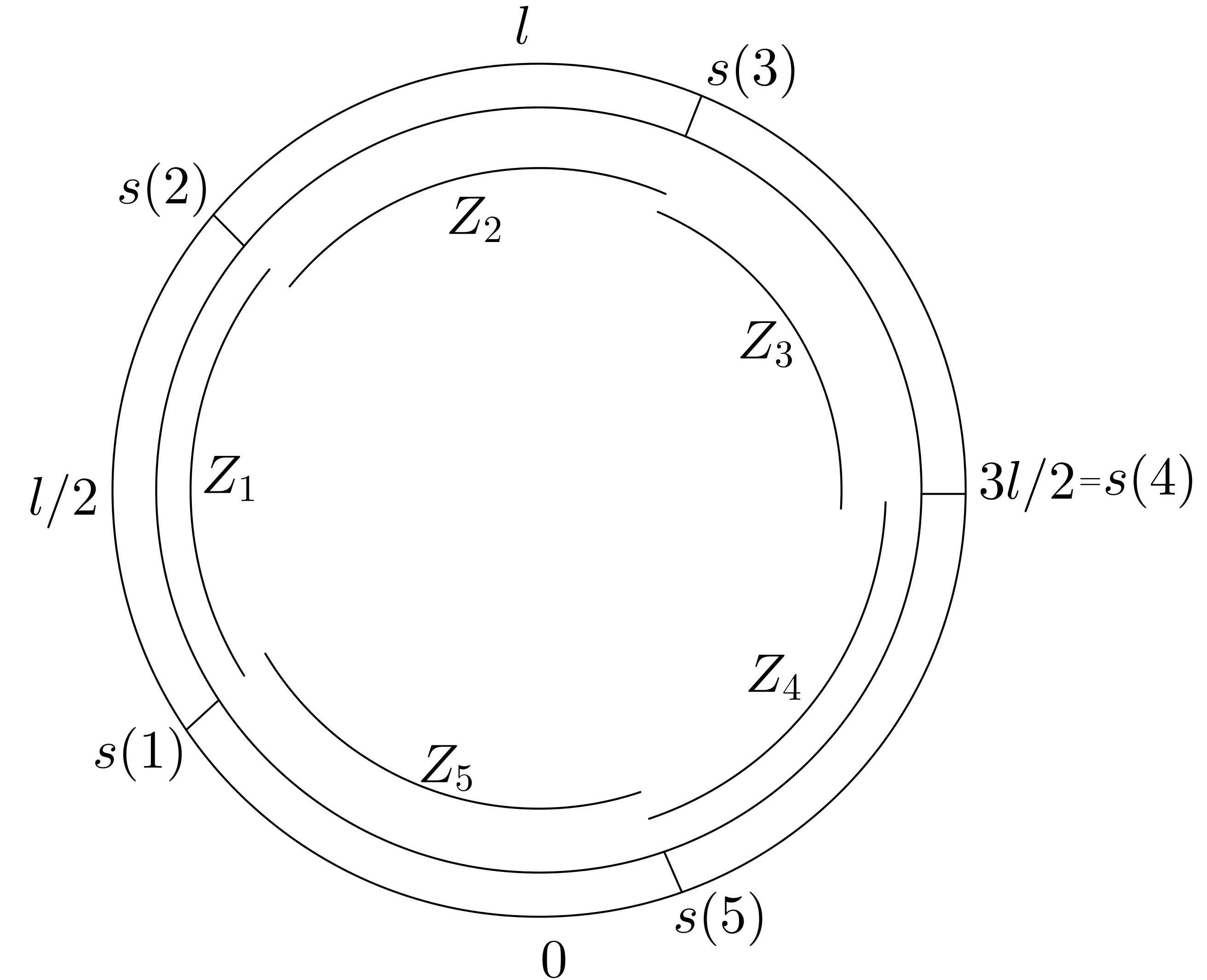}
  \caption{\textit{Exactly three zones containing a special letter.}} \label{part_a}
\end{subfigure}
\begin{subfigure}{0.45\textwidth}
  \centering
  \includegraphics[width=.8\linewidth]{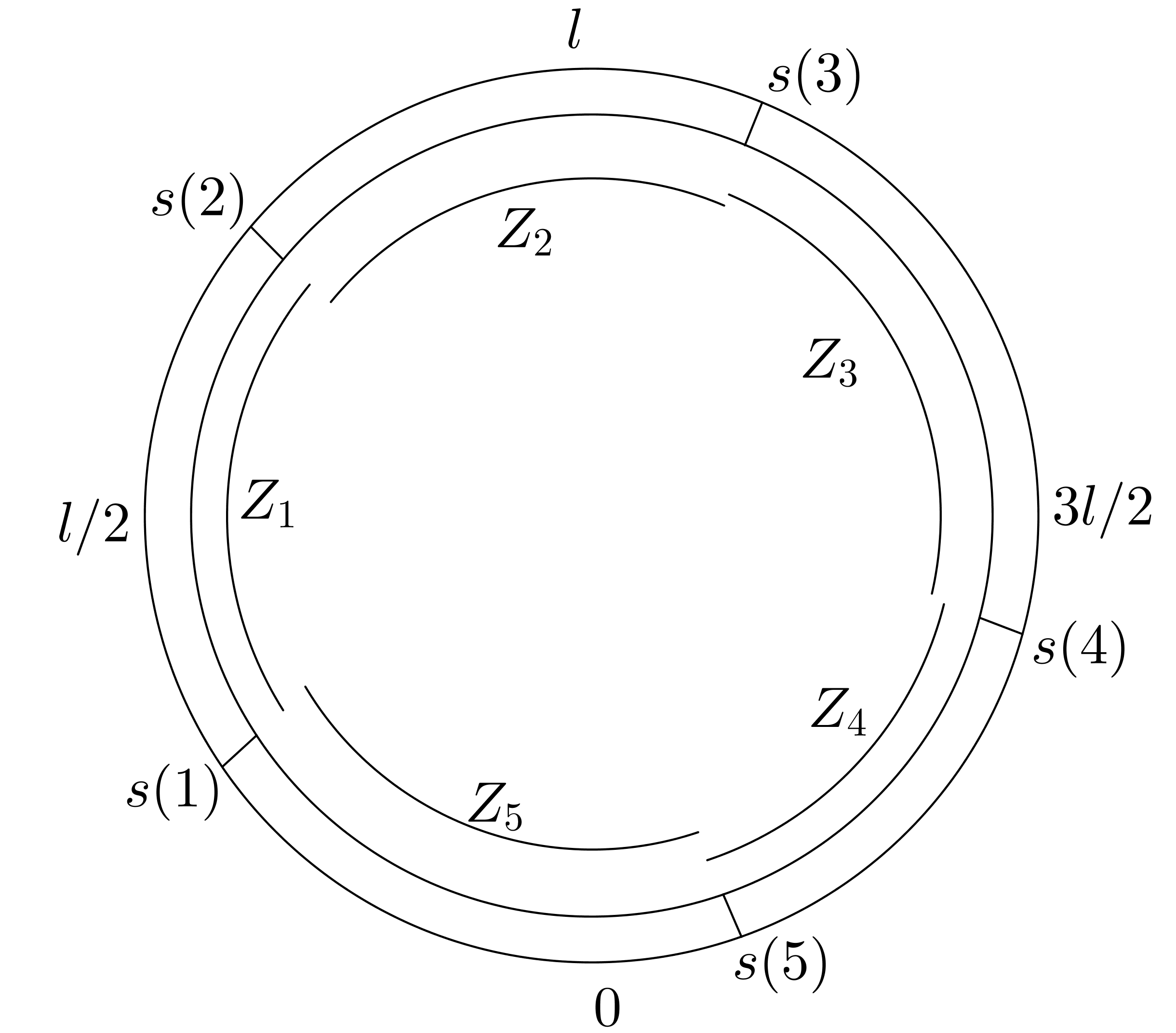}
  \caption{\textit{Exactly four zones containing a special letter.}} \label{part_b}
\end{subfigure}
\caption{\textit{Diagram showing the arrangement (in clockwise order) of five zones around an interior vertex.}} \label{fig:fig}
\end{figure} 
\begin{rem}\label{ine}
For each $z_{s(i)}$, we have  $s(i)\neq s(j)~ mod~ l/2$ for $i\neq j$ since  no $\omega_i\geq l/2$.
\end{rem}
Remark \ref{ine} gives the inequalities 
\begin{equation}\label{isteq1}
0<s(3)<s(1)<l/2=s(4)<s(2)<s(5)\leq l-1
\end{equation}
and 
\begin{equation}\label{isteq2}
0<s(3)<s(1)<l/2<s(4)<s(2)<s(5)\leq l-1
\end{equation}

corresponding to Figure \ref{part_a} and Figure \ref{part_b} respectively. (Note that $s(i)$ in the inequalities is the modulo $l$ equivalent. The actual value can be read from the Figure \ref{fig:fig}. For example the actual value for $s(4)$ in Figure \ref{part_a} is $\frac{3l}{2}$.) The corresponding subwords are:
\begin{eqnarray*}
s_{_1} &~=&~ z_{_{s(1)+1}}  \ldots z_{_{s(2)-1}},
\\ s_{_2} &~=&~ z_{_{s(2)+1}}  \ldots z_{_{s(3)-1}},
\\ s_{_3} &~=&~ z_{_{s(3)+1}}  \ldots z_{_{s(4)-1}},
\\ s_{_4} &~=&~ z_{_{s(4)+1}}  \ldots z_{_{s(5)-1}},
\\ s_{_5} &~=&~ z_{_{s(5)+1}}  \ldots z_{_{s(1)-1}}.
\end{eqnarray*}

\begin{lem}\label{las}
Let $W$ be a cyclically reduced word of length $2m$ in a free product. Let $X$ be a reduced word of length $m$ such that both $X$ and ${X}^{-1}$ appear as cyclic subwords of $W$. Then $X$ contains a letter of order $2$.
\end{lem}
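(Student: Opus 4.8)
The plan is to pick coordinates that turn the hypothesis into an ``anti-palindromic'' symmetry and then locate a fixed letter of the resulting involution. First I would use the fact that $X$ occurs as a cyclic subword of $W$ to cyclically permute $W$ so that $X$ becomes its initial segment: $W\equiv z_0z_1\cdots z_{2m-1}$ (all subscripts read modulo $2m$) with $X\equiv z_0z_1\cdots z_{m-1}$; thus $W\equiv XY$, where $Y\equiv z_mz_{m+1}\cdots z_{2m-1}$ also has length $m$. Since $W$ is cyclically reduced in the free product, consecutive letters of $W$ (cyclically) lie in distinct free factors, so the parity of $i$ determines which factor contains the (nontrivial) letter $z_i$.

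Next I would exploit the occurrence of $X^{-1}$. As $X^{-1}$ is a cyclic subword of $W$, we can write $X^{-1}\equiv z_jz_{j+1}\cdots z_{j+m-1}$ for some $j$; comparing letterwise with $X^{-1}\equiv z_{m-1}^{-1}z_{m-2}^{-1}\cdots z_0^{-1}$ gives $z_i=z_{j+m-1-i}^{-1}$ for $i=0,1,\dots,m-1$. Equal letters lie in the same free factor, so $i\equiv j+m-1-i\pmod2$ for every such $i$, whence $c:=j+m-1$ is even. Writing $\sigma(i)=c-i\pmod{2m}$, an affine involution of $\mathbb Z/2m\mathbb Z$, the relations become $z_i=z_{\sigma(i)}^{-1}$ for $i\in\{0,\dots,m-1\}$. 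Since $c$ is even, the congruence $2i\equiv c\pmod{2m}$ has exactly two solutions, differing by $m$, so precisely one of them — call it $i_0$ — lies in $\{0,1,\dots,m-1\}$. Then $\sigma(i_0)=i_0$, hence $z_{i_0}=z_{i_0}^{-1}$, i.e. $z_{i_0}^2=1$; as $z_{i_0}\neq1$, it has order $2$, and since $i_0\le m-1$ it is a letter of $X$, as required.

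I expect the only real obstacle to be the index bookkeeping modulo $2m$: one has to verify carefully that exactly one of the two ``square roots'' of $c$ lands in the first half $\{0,\dots,m-1\}$. It is also worth noting that the seemingly special placements of $X^{-1}$ — in particular $X^{-1}$ occupying the complementary half $Y$, which would mean $W\equiv XX^{-1}$ and so contradict cyclic reducedness of $W$ — need no separate treatment: $j\equiv m\pmod2$ would force $c$ odd, contradicting the parity computation, so those placements simply cannot occur.
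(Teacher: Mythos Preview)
Your argument is correct. Both your proof and the paper's rest on the same idea --- the symmetry forces some letter of $X$ to equal its own inverse --- but the packaging differs. The paper argues via overlap: since $\ell(X)+\ell(X^{-1})=\ell(W)$, the two cyclic subwords must intersect (otherwise $W$ is cyclically conjugate to $XX^{-1}$, contradicting cyclic reducedness), and the overlap $Y$ then satisfies $Y\equiv Y^{-1}$, hence has odd length with middle letter of order~$2$. You instead fix coordinates, read off the relations $z_i=z_{c-i}^{-1}$, deduce that $c$ is even directly from the alternation of free factors, and then locate the unique fixed point of the affine involution $i\mapsto c-i\pmod{2m}$ lying in $\{0,\dots,m-1\}$. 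The paper's route is shorter and more geometric; yours makes the parity constraint and the involution structure explicit, and, as you note, absorbs the degenerate ``complementary-half'' placement into the parity computation rather than handling it as a separate case.
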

\begin{proof}
The subwords $X$ and ${X}^{-1}$ of $W$ must intersect, for otherwise $W$ is a cyclic conjugate of $X{X}^{-1}$, contradicting the fact that it is cyclically reduced. Hence there is an initial segment $Y$ of $X$ or ${X}^{-1}$ that coincides with a terminal segment of ${X}^{-1}$ or $X$ respectively. Thus $Y\equiv{Y}^{-1}$ and so $Y$ has an odd length and its middle letter has order $2$.
\end{proof}

\subsection*{More notations}

 Think of $aUbU^{-1}=z_0\cdots z_{l-1}$ as a
cyclic word satisfying a partial reflectional symmetry
using the special letters as mirrors.  Thus $U$ has \textit{mirror image} $U^{-1}$. More generally the  \textit{mirror image} of $z_j\ldots z_{_k}$ is $z_{_{l+2-j}}\ldots z_{_{l+2-k}}$ ( subscripts modulo $l$ ). Unless stated otherwise, $X$ (with or without subscript) denotes an initial or terminal segment of $U$. Similarly $Y$ (with or without subscript) will to denote an initial or terminal segment of $U^{-1}$. Also $X^{-1}$ and $Y^{-1}$  are mirror images of $X$ and $Y$ respectively.  
  Using this notation, we can express the subwords in the zones as:
\begin{eqnarray*}
s_{_1} &~=&~ X_{_1}x_{_1} Y_{_1},
\\ s_{_2} &~=&~ Y_{_2}y_{_2} X_{_2},
\\ s_{_3} &~=&~ X_{_3}x_{_3} Y_{_3},
\\ s_{_5} &~=&~ Y_{_5}y_{_5} X_{_5}.
\end{eqnarray*}
where $x_i=b$ and $y_i=a$ are the corresponding special letters. In other words if $X_i$ is a terminal segment of $s_i$ ( as in the case of $s_{_2}$), then it is an initial segment of $U$. In which case  $Y_i$ is a terminal segment of $U^{-1}$. If $s(4)=l/2$, then $s_{_4}=Y_{_4}$. Otherwise, $s_{_4}$ is a subword of $U^{-1}$ which is neither an initial nor terminal segment. Note that some of the $x_i,X_i,y_i$ and $ Y_i$ are allowed to be empty as in the case of $s_{_3}$ in Figure \ref{part_b}.

\medskip
Let $s_i'$ be the mirror image of $t_i$. Then $s_i'$ is identically equal to $s_i$ (modulo $\sim$). In particular if $s_i= X_ix_{_1} Y_i$, then $s_i'= X_i' x_{_1}' Y_i'$ where $ X_i'\sim X_i$,  $x_{_1}'\sim x_{_1}$ and  $Y_i'\sim Y_i$.

\medskip
We use $M$ to denote an initial segment of $U$ or a terminal segment of $U^{-1}$. Similarly $N$ denotes a terminal segment of $U$ or an initial  segment of $U^{-1}$. $M_{_k}^+$ is the initial segment of $U$ or terminal a terminal segment of $U^{-1}$ of length  $\ell(M)+k$. If $M$ is a subword of $s_i$, then  $M_i$  denotes the image of $M$ under $Z_i$ and $M'$ is the mirror image of $M_i$. $N_{_k}^+$, $N_j$ and $N'$ are defined similarly. 

\begin{rem}\label{sub}
One thing to note is that if $M$ is a subword of $U$, say, with $\ell(M)\geq \frac{l}{4}$, then $M_i$ can not be a subword of $U$ by Lemma \ref{od2}. So that either $M'$ is a subword of $U$ or neither a subword of $U$ nor $U^{-1}$. In the case where $M$ and $M'$ are both subwords of $U$ or $U^{-1}$, then what we call $M_k^+$ will be the union of the two. It follows that $\ell(M_k^+)>\ell(M)$ for otherwise  $M'=M$ (i.e $M'$ is also an initial segment of $U$), and hence $M_i$ is a terminal segment of $U^{-1}$, and hence there is an amalgamation of cliques, contrary to hypothesis.
\end{rem}
\begin{lem}\label{peece}
Let $M$, $M'$ and $\ell(M)+k$ be as in Remark \ref{sub}. If $\ell(M)\geq \frac{l}{4}$ and both $M$ and $M'$ are subwords of $U$ or $U^{-1}$, then $M_{_k}^+$ has period $\gamma=\ell(M_{_k}^+)-\ell(M)\leq\ell(U)-\ell(M)$.
\end{lem}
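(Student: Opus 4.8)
The plan is to extract the claimed periodicity directly from the identification $s_i' \equiv s_i$ modulo $\sim$ coming from the zone $Z_i$, exactly in the spirit of Remark \ref{bor} (a word bordered by $u$ has period $\ell(w) - \ell(u)$). First I would fix notation: suppose both $M$ and $M'$ are initial segments of $U$ (the three other cases -- $M$, $M'$ both terminal segments of $U$, or both segments of $U^{-1}$ -- are symmetric under mirror image or inversion). Since $Z_i$ identifies $s_i$ with $t_i^{-1}$ (modulo $\sim$), and $M'$ is by definition the mirror image of $M_i$, the mirror symmetry of $aUbU^{-1}$ about its special letters translates the identification into: $M$ and $M'$ are both initial segments of $U$ of lengths $\ell(M)$ and $\ell(M) + k$ respectively with $\ell(M)+k \leq \ell(U)$ by Remark \ref{sub} (strict inequality $k>0$ there, used only to guarantee $\gamma \geq 1$). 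Here $M_k^+$ is the longer of the two, i.e. the initial segment of $U$ of length $\ell(M)+k$, and $M$ is obtained from $M_k^+$ by deleting the last $k$ letters, i.e. $M$ is a proper initial segment of $M_k^+$.

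Next I would observe that $M$ is \emph{also} a terminal segment of $M_k^+$, again modulo $\sim$. This is the heart of the matter: the zone $Z_i$ pairs up the corner labels so that the subword of $\mathrm{label}(v)$ read off at $v$ along $Z_i$ is inverse to the one read at $u_i$; after applying the mirror symmetry, this says precisely that $M$ coincides (in $(\Omega/\sim)^*$) with the terminal segment of $M_k^+$ of the same length. Thus $M_k^+$ is bordered by (a $\sim$-class copy of) $M$ in the sense of Definition \ref{pe}, working in the free monoid $(\Omega/\sim)^*$ where clique-labels are genuinely periodic. By Remark \ref{bor}, $M_k^+$ therefore has period $\gamma = \ell(M_k^+) - \ell(M)$, and by construction $\ell(M_k^+) = \ell(M) + k \le \ell(U)$, so $\gamma = k \le \ell(U) - \ell(M)$, which is the assertion.

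The main obstacle I anticipate is bookkeeping rather than conceptual: one must verify carefully that in \emph{all} the allowed cases (whether $M$ is an initial or terminal segment of $U$, whether $M'$ lands in $U$ or $U^{-1}$, whether or not $s(4) = l/2$, and whether the special letter sits at the left or right end of the relevant piece) the identification produced by $Z_i$ really does make $M$ a terminal segment of $M_k^+$ and not, say, force $M' = M$ outright. But the case $M' = M$ is exactly the one excluded in Remark \ref{sub}, where it is shown to produce an amalgamation of cliques contrary to the minimality hypothesis; so that degenerate possibility is already ruled out, and the hypothesis $\ell(M) \geq l/4$ guarantees (via Lemma \ref{od2}) that $M_i$ cannot itself be a subword of $U$, pinning down which of the symmetric cases can occur. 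Once the case analysis is organised, each case reduces to the one-line borderedness argument above.
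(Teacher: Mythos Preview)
Your approach is correct and is precisely the one the paper intends: its proof reads in full ``follows easily from Remarks \ref{bor} and \ref{sub}'', i.e.\ the borderedness argument you give.

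There is, however, a notational slip worth correcting. You write that $M$ and $M'$ are both initial segments of $U$, of lengths $\ell(M)$ and $\ell(M)+k$ respectively, and that $M_k^+$ is ``the longer of the two''. This is not the set-up. By definition $M'$ is the mirror image of $M_i$, so $\ell(M')=\ell(M)$; and $M_k^+$ is defined in Remark \ref{sub} as the \emph{union} of $M$ and $M'$ inside $U$ (or $U^{-1}$), not as $M'$ itself. Concretely: $M$ is the initial segment of $U$ of length $\ell(M)$, while $M'$ is some other segment of $U$ of the same length (necessarily not initial, since $M'=M$ is the amalgamation case excluded in Remark \ref{sub}); their union $M_k^+$ is then the initial segment of $U$ of length $\ell(M)+k$, where $k>0$ is the offset of $M'$. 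Now $M_k^+$ has $M$ as initial segment trivially, and has $M'$ as terminal segment by construction; since $M'\equiv M$ (modulo $\sim$, and in fact on the nose as neither contains a special letter), $M_k^+$ is bordered by $M$ and Remark \ref{bor} gives period $\gamma=k=\ell(M_k^+)-\ell(M)\le\ell(U)-\ell(M)$. Once you fix this description of $M'$ and $M_k^+$, your second paragraph is exactly right.
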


The proof of  Lemma \ref{peece} follows easily from Remarks \ref{bor} and \ref{sub}.

\begin{lem}\label{le8}
Suppose that $U$ has a period $\gamma<\ell(U)$ with $X$ and $Y$ as initial and terminal segments respectively both of length  $\gamma/2$. Then no segment of  $s_i$ is of the form ${X}y_i X^{-1}$ or $ Yx_i {Y}^{-1}$.
\end{lem}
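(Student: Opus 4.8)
The plan is to argue by contradiction, using the rigid way a special letter sits inside $\mathrm{label}(v)=(aUbU^{-1})^2$: the $\gamma/2$ letters of $\mathrm{label}(v)$ immediately following a special letter $a$ spell the initial segment of $U$ of length $\gamma/2$ (which is $X$ by hypothesis); the $\gamma/2$ letters immediately preceding a special letter $b$ spell the terminal segment of $U$ of length $\gamma/2$ (which is $Y$); and the $\gamma/2$ letters immediately following a special $b$ spell the initial segment of $U^{-1}$ of length $\gamma/2$. Since no zone contains more than one special letter (Lemma \ref{od3}), in any segment of $s_i$ of the stated shape the central special letter is that unique one.

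First suppose some segment of $s_i$ equals $Xy_iX^{-1}$, with $y_i=a$ the special letter. The $\gamma/2$ letters following $a$ are, on the one hand, the word $X^{-1}$ (the mirror image of $X$), and, on the other, the initial segment of $U$ of length $\gamma/2$, i.e.\ $X$. Hence $X\equiv X^{-1}$, so every letter of $X$ equals its own inverse; as $\gamma/2\ge1$ this exhibits a letter of $R(a,UbU^{-1})$ of order $2$, contrary to Hypothesis B.

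Now suppose some segment of $s_i$ equals $Yx_iY^{-1}$, with $x_i=b$ the special letter; this is the substantive case. The $\gamma/2$ letters following $b$ give both the word $Y^{-1}$ and the initial segment of $U^{-1}$ of length $\gamma/2$, and comparing these forces $Y$ to equal its own reversal, i.e.\ $Y$ is a palindrome. Next bring in the zone identification $s_i\equiv t_i^{-1}$, with $t_i$ a cyclic subword of $\mathrm{label}(u_i)$: the central special letter $b$ of $s_i$ corresponds to a letter of $\mathrm{label}(u_i)$ which, the clique-picture being reduced, occupies a non-special position (otherwise $v$ and $u_i$ could be amalgamated, cf.\ the proof of Lemma \ref{od3}), hence lies strictly inside a single $U$- or $U^{-1}$-block of $\mathrm{label}(u_i)$ and is a power of $b$. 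The flanking subwords $Y$ and $Y^{-1}$ of $s_i$, each of length $\gamma/2$ and free of special letters, are carried by $Z_i$ into that same block; matching them letter-by-letter against $U$ (resp.\ $U^{-1}$) yields further coincidences among the letters of $U$. Combining these with the given period $\gamma$, with the palindromicity of $Y$, and with the facts that $aUbU^{-1}$ is cyclically reduced with no letter of order $2$, one forces a contradiction: either directly, producing a letter of order $2$ in $U$ or two adjacent mutually inverse letters of $aUbU^{-1}$, or, via Lemma \ref{nz2} applied to $z_{s(i)}\,s_i\,z_{s(i+1)}\,t_i$ as in the proof of Lemma \ref{od3}, a refinement of the generalised triangle group description of $G$, against maximality.

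I expect the only real obstacle to be this second case. There $U$ carries three overlapping quasi-periodicities at once — the prescribed period $\gamma$, the reflection symmetry of the palindrome $Y$, and the local period produced by forcing $Y$ and $Y^{-1}$ into a single block of $\mathrm{label}(u_i)$ — and the work lies in showing that these cannot coexist with $U$ being reduced and free of letters of order $2$ unless a refinement is available. Handling the boundary regimes (small $\gamma/2$, and zones $Z_i$ whose size $\omega_i$ is close to the bound of Lemma \ref{od3}) is the delicate point; the first case, by contrast, is one line.
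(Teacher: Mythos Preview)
Your first case ($Xy_iX^{-1}$ with $y_i=a$) is fine: comparing the segment with what actually sits after the special $a$ in $(aUbU^{-1})^2$ forces $X\equiv X^{-1}$ at once.

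The second case is where the proposal fails, and the error is right at the start. You claim that comparing the $\gamma/2$ letters following $b$ in the segment $Yx_iY^{-1}$ with the initial segment of $U^{-1}$ of length $\gamma/2$ ``forces $Y$ to equal its own reversal''. But both of these words are exactly $Y^{-1}$: the initial $\gamma/2$ letters of $U^{-1}$ are the inverse of the terminal $\gamma/2$ letters of $U$, i.e.\ $Y^{-1}$. So the comparison is a tautology and yields no palindrome condition whatsoever. Indeed the block $YbY^{-1}$ is \emph{always} present around each special $b$ in $\mathrm{label}(v)$ as soon as there are $\gamma/2$ letters on either side; the content of the lemma is that this cannot happen inside a zone-word $s_i$, and for that the zone identification $s_i\equiv t_i^{-1}$ must do real work. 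Your subsequent sketch invokes it, but never extracts a usable constraint: the talk of ``three overlapping quasi-periodicities'' and a possible appeal to Lemma~\ref{nz2} is programmatic, and you yourself flag the argument as incomplete.

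The paper's route is different and much shorter. Passing to the image of the segment in $\mathrm{label}(u_i)$ via $s_i\equiv t_i^{-1}$, one sees that (some half of) the image lands inside a single $U$- or $U^{-1}$-block; this puts a copy of $X^{-1}$ (equivalently of $Y^{-1}$) inside $U$. Since $U$ has period $\gamma$ and $\ell(X)=\gamma/2$, every length-$\gamma$ subword $W$ of $U$ then contains both $X$ and $X^{-1}$ as cyclic subwords. Lemma~\ref{las} (a cyclically reduced word of length $2m$ cannot contain both a length-$m$ word and its inverse as cyclic subwords without a letter of order $2$) immediately gives the contradiction. That lemma is the missing ingredient; with it the substantive case is a one-liner, and neither Lemma~\ref{nz2} nor any delicate boundary analysis is needed.
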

\begin{proof}
Suppose without loss of generality that $s_i$ has ${X}y_i X^{-1}$ as a segment. Then it is identically equal to a subword of $Ux U^{-1}$. Thus  $X$ is identically equal to a subword of $U$ or $U^{-1}$. Take $W$ to be any subword of $U$ of length $\gamma$. The periodicity of $U$ implies that each of $X, {X}^{-1}$ is identically equal to a cyclic subword of $W$. By Lemma \ref{las} it follows that $X$ contains a letter of order $2$ contrary to hypothesis.
\end{proof}

\begin{lem}\label{le88}
Suppose that $W$ has period $\gamma<\ell(W)$ and no element of order $2$. Then $W$ has no subword of the form $L=wrw^{-1}$ with $\ell(w)\geq \gamma/2$. 
\end{lem}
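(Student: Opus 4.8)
The plan is to argue by contradiction. Suppose $W$ has a subword $L=wrw^{-1}$ with $\ell(w)\ge\gamma/2$, and set $\mu:=\lceil\gamma/2\rceil$; since $\ell(w)$ is an integer this gives $\ell(w)\ge\mu$, and $1\le\mu\le\gamma$ because $\gamma\ge 1$. Let $X$ be the initial segment of $w$ of length $\mu$, so that $X$ is an initial segment and $X^{-1}$ a terminal segment of $L$; in particular both $X$ and $X^{-1}$ occur as subwords of $W$. The goal throughout is to locate a letter of order $2$ inside $X$, which (being a letter of $W$) contradicts the hypothesis.

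First I would pass from $W$ to a periodic word in which occurrences of $X$ and of $X^{-1}$ can be positioned close together. Writing $W=w_1\cdots w_N$ with $N>\gamma$, the period hypothesis and an easy induction give $w_i=w_j$ whenever $i\equiv j\bmod\gamma$, so $W$ embeds as a subword of the bi-infinite word $\widetilde W$ of period $\gamma$ with $\widetilde W(i)=w_i$ for $1\le i\le\gamma$. The one point needing care here is that $\widetilde W$ is still reduced: the only new adjacencies it introduces are pairs of the form $w_\gamma w_1$, and $w_\gamma\ne w_1^{-1}$ precisely because $N>\gamma$ forces $w_{\gamma+1}=w_1$ to be an honest letter of the reduced word $W$ sitting next to $w_\gamma$. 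Now $X$ and $X^{-1}$ are subwords of $\widetilde W$ of length $\mu\le\gamma$; fixing an occurrence of $X$ at positions $[\,i,i+\mu-1\,]$, periodicity provides an occurrence of $X^{-1}$ whose starting position $j$ lies in the block $[\,i-\gamma+\mu,\ i+\mu-1\,]$ of $\gamma$ consecutive integers.

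Then I would split into two cases according to whether these two occupied intervals overlap. If they are disjoint, then from $j\le i+\mu-1$ we get $j\le i-\mu$, which together with $j\ge i-\gamma+\mu$ and $\mu\ge\gamma/2$ forces $\gamma=2\mu$ and $j=i-\mu$; but then $\widetilde W$ contains $X^{-1}X$ as a subword, which is impossible since $X^{-1}X$ is not reduced while $\widetilde W$ is. If instead they overlap, the overlap read off from the occurrence of $X$ is a segment $Y$ (initial or terminal) of $X$, while read off from the occurrence of $X^{-1}$ the same subword of $\widetilde W$ is the matching segment of $X^{-1}$, which is $Y^{-1}$. Hence $Y\equiv Y^{-1}$ with $Y$ a nonempty reduced word, so $\ell(Y)$ must be odd (an even length would make the central pair of $Y$ cancel, contradicting that $W$ is reduced) and the central letter of $Y$ squares to $1$, i.e.\ has order $2$ — the desired contradiction.

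The main obstacle is the bookkeeping around the periodization: verifying that $\widetilde W$ remains reduced, and cleanly disposing of the borderline configuration in which $X$ and $X^{-1}$ sit exactly adjacent in $\widetilde W$ — which is exactly the case $\gamma=2\mu$ treated above, and is ruled out by the non-reducedness of $X^{-1}X$ (equivalently, by the fact that every length-$\gamma$ window of $\widetilde W$ is cyclically reduced). It is worth noting that this lemma is a clean generalization of Lemma \ref{le8}, and that the overlap step is essentially the palindrome argument already used in Lemma \ref{las}; indeed one could alternatively deduce the overlap case by applying Lemma \ref{las} directly to a suitable cyclically reduced window of $\widetilde W$ of length $2\mu$, invoking the no-order-$2$ hypothesis to guarantee cyclic reducedness of that window in the case $2\mu=\gamma+1$.
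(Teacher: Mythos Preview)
Your argument is correct, but it takes a substantially more elaborate route than the paper's own proof, which is essentially a one-liner. The paper simply observes that the letters $\gamma/2$ places before and after $r$ in $L=wrw^{-1}$ are mutual inverses (by the palindromic shape of $L$) and also equal (since they are $\gamma$ apart in the periodic word $W$), hence of order $2$. No passage to a bi-infinite periodization, no case split on overlap versus disjointness, and no appeal to reducedness is needed.

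By contrast, you embed $W$ into its bi-infinite periodization $\widetilde W$, shift an occurrence of $X^{-1}$ into a prescribed window near $X$, and then run the palindrome/overlap argument of Lemma~\ref{las}. This works, and it has the minor advantage that the use of $\mu=\lceil\gamma/2\rceil$ covers odd $\gamma$ cleanly (in the paper's applications $\gamma$ is always even, so this generality is not needed). The cost is that you must verify that $\widetilde W$ remains reduced and handle the adjacent-but-disjoint borderline case separately, both of which rely on $W$ being reduced---an assumption which holds in context but is not stated in the lemma and is not used in the paper's argument. Your closing remark about deducing the overlap case from Lemma~\ref{las} applied to a length-$2\mu$ window is plausible when $\gamma=2\mu$, but for $2\mu=\gamma+1$ the bookkeeping of cyclic subwords in a window whose first and last letters coincide is a little delicate; since this is only an aside, it does not affect the validity of your main argument.
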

\begin{proof}
Suppose by contradiction that $W$ has a subword of the form $L=wrw^{-1}$ with $\ell(w)\geq \gamma/2$. Let $r_{_1}$ and $r_{_2}$ be the letters $\gamma/2$ places before and after $r$ in $L$ respectively, then $r_{_1}^{-1}=r_{_2}$. But by the periodicity of $W$, $r_{_1}=r_{_2}$. Hence $W$ has an element of order $2$ contradicting hypothesis.
\end{proof}
\medskip
\begin{lem}\label{pee}
Let $X$  be an initial 
 (resp. terminal) segment of $U$ 
 of length $\ell(X)\geq \frac{l}{4}$. Suppose $s_i=Y a X$  (resp. $s_i=X a Y$)
for some terminal  (resp. initial)
segment $Y$ of $U^{-1}$. If $s_i'$ is not contained entirely in $U$, then $U$ has  period $\gamma\leq 2(\ell(U)-\ell(X))$. Furthermore if $\gamma= 2(\ell(U)-\ell(X))$, then 
 $U$ has a terminal (resp. initial) segment of
 the form $xw^{-1}z_{s(i+1)}w$
(resp. $wz_{s(i)}w^{-1}x$) for some  letter $x$ and some word $w$ with $\ell(w)=\gamma/2-1$.
\end{lem}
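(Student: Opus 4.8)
The plan is to locate the word $s_i'$, the mirror image of $t_i$, precisely inside the reflectionally structured word $aUbU^{-1}$, exploiting the hypothesis that $s_i'$ is not contained in a $U$-block, and to extract from this a border of $U$.

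First I would invoke Remark \ref{sub}: since $X$ is an initial segment of $U$ of length at least $l/4$, the image $X_i$ of $X$ under $Z_i$ is not contained in any $U$-block of $\mathrm{label}(u_i)$, and the hypothesis rules out $X'$, the mirror image of $X_i$, being contained in a $U^{-1}$-block; since $X'\equiv X$ modulo $\sim$ this forces $X'$, and hence $s_i'$, to straddle a special letter of $\mathrm{label}(u_i)$. I would then dispose of the degenerate positions. If $X'$ straddles a special $a$-letter, then so does $t_i=\mathrm{mirror}(s_i')$; since $s_i=YaX$ also straddles a special $a$-letter of $\mathrm{label}(v)$, this gives $s(i)+j\equiv t(i)+m\pmod l$ for suitable $1\le j,m<\omega_i$, so Lemma \ref{od2} produces a letter of order $2$, against Hypothesis B. If $X'$ straddles a special $b$-letter whose $\sim$-class is that of the central letter $a$ of $s_i$, then $a\sim b$, so by admissibility $a$ and $b$ have a common root and the generalised triangle group description admits a refinement, again a contradiction. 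There remains the case that $X'$ straddles a special $b$-letter which corresponds under $\sim$ to an interior letter of $U$.

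In this remaining case I would read off the structure of $aUbU^{-1}$ at the straddled $b$: it is preceded by (the end of) a $U$-block and followed by (the start of) a $U^{-1}$-block. Comparing the part of $X'$ before the $b$ with a terminal segment of $U$, and the part after the $b$ with an initial segment of $U^{-1}$, and using $X'\equiv X$ modulo $\sim$ together with the fact that $U$ is reduced and has no letter of order $2$, one deduces that an initial segment of $U$ also occurs as a terminal segment of $U$, i.e. $U$ is bordered, whence $U$ has a period by Remark \ref{bor}. The arithmetic of where the $b$ sits inside $X'$, combined with $\ell(X)\ge l/4$ (which is exactly what makes the two relevant segments overlap inside $U$), bounds the resulting period above by $2(\ell(U)-\ell(X))$; the factor $2$ arises because the bound is obtained by composing the border shift on one side of the $b$ with the reflection across the $b$ on the other side, each of which can move a letter by at most $\ell(U)-\ell(X)$, and when two periods appear one passes to their common value by Corollary \ref{ctm}. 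For the last assertion, when the period $\gamma$ equals $2(\ell(U)-\ell(X))$ the overlap is exactly tight, so the reflection relation determines the terminal segment of $U$ of length $\gamma$ letter by letter: its central letter must be $z_{s(i+1)}$, the letter of $\mathrm{label}(v)$ immediately following $s_i$ (which is the $(\ell(X)+1)$-st letter of $U$), and the segment takes the palindromic form $xw^{-1}z_{s(i+1)}w$ with $\ell(w)=\gamma/2-1$; the ``resp.'' version is the mirror of this argument. The chief obstacle is the position bookkeeping under $Z_i$ and the mirror map — keeping careful track of which letters of $U$ may be powers of $a$ or $b$ modulo $\sim$ — and in particular checking that in the remaining case the segments overlap enough to apply Remark \ref{bor} and Corollary \ref{ctm} and to make both the bound $2(\ell(U)-\ell(X))$ and the extremal normal form sharp.
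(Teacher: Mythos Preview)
Your overall plan --- locate $s_i'$ inside the $aUbU^{-1}$ template, find that it straddles the special $b$-letter, and extract a border of $U$ --- is precisely the paper's approach. But two aspects of your execution are off.

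First, the case analysis is superfluous, and your disposal of one case is wrong. This lemma lives under Hypothesis~B, which does \emph{not} assume admissibility of $(a,b)$; so your sentence ``by admissibility $a$ and $b$ have a common root and the generalised triangle group description admits a refinement'' is an illegitimate move. Fortunately that case cannot occur: Lemma~\ref{od2} (plus the usual parity observation) forces $t_i$ to sit inside $Vb^\psi U^{-1}$, where $U\equiv Xz_{s(i+1)}V$; mirroring puts $s_i'$ inside $Ub^{-\psi}V^{-1}$, and the hypothesis ``$s_i'$ not contained in $U$'' forces $s_i'=w_1 b^{-\psi}w_2^{-1}$ with $w_2$ a terminal segment of $V$. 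Since $\ell(w_2)\le\ell(V)=\ell(U)-\ell(X)-1$ and $\ell(X)\ge l/4$, one gets $\ell(w_1)=\ell(Y)+\ell(X)-\ell(w_2)>\ell(Y)$, so the special $b$-letter lands strictly inside the $X$-portion of $s_i$, never at the central $a$. The ``$X'$ straddles a special $a$-letter'' case is likewise excluded directly by Lemma~\ref{od2}.

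Second, only \emph{one} border and one period appear; Corollary~\ref{ctm} plays no role. Writing $w_1\equiv YaS$ gives $S$ as a terminal segment of $U$ (since $w_1$ is), while $X\equiv Sb^{-\psi}w_2^{-1}$ makes $S$ an initial segment of $U$. Remark~\ref{bor} then yields period $\gamma=\ell(U)-\ell(S)=\ell(U)-\ell(X)+\ell(w_2)+1\le 2(\ell(U)-\ell(X))$. Equality forces $\ell(w_2)=\ell(V)$, i.e.\ $w_2\equiv V$, so $U\equiv Xz_{s(i+1)}V\equiv Sb^{-\psi}w_2^{-1}z_{s(i+1)}w_2$, giving the claimed terminal segment with $x=b^{-\psi}$ and $w=w_2$. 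Your talk of ``composing the border shift with the reflection'' and ``when two periods appear'' misdescribes this straightforward single-border computation.
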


\begin{proof}
 By symmetry it suffices to prove the first case,
where $X$ is an initial segment of $U$.
Write $U\equiv X z_{s(i+1)} V$ for some terminal segment $V$ of $U$. Consider the mirror image $s_i'$ of $t_i$. By Lemma \ref{od2}, $t_i$ is a subword of  $Vb^\psi U^{-1}$  for some $\psi$. Hence $s_i'$ is a subword of $Ub^{-\psi}V^{-1}$.
Therefore $s_i'$  has the form $w_{_1}b^{-\psi}w_{_2}^{-1}$ for some terminal segments $w_{_1}$ and $w_{_2}$ of $U$ with $\ell(w_{_2})\leq \ell(V)=\ell(U)-(\ell(X)+1)<\frac{l}{4}-1$. 
Denote by $S$  the  terminal segment of $U$ of length 
\begin{align*}
\ell(S)=&~\ell(X)-\ell(w_{_2})-1\\
=&~\ell(s_i' )-\ell(w_{_2})-\ell(Y)-2\\
=&~\ell(w_{_1})-\ell(Y)-1.
\end{align*} 
Then $w_{_1}\equiv Ya S$ and $X\equiv Sb^{-\psi}w_{_2}^{-1}$ and so  $S$ is identically equal to an initial segment of $U$. Thus by Remark \ref{bor}, we have that $U$ has period 
\begin{align*}
\gamma=&~\ell(U)-\ell(S)\\
=&~ \ell(U)-\ell(X)+\ell(w_{_2})+1\\
\leq &~  2(\ell(U)-\ell(X)).
\end{align*} 
Finally if $\gamma=2(\ell(U)-\ell(X))$, then $1+\ell(X)+\ell(w_{_2})=\ell(U)$. Thus $V\equiv w_{_2}$ and so $U\equiv X z_{s(i+1)} V \equiv Sb^{-\psi}w_{_2}^{-1}z_{s(i+1)}w_{_2}$. The result follows by talking $w=w_{_2}$ and $x=b^{-\psi}$.

\end{proof}

\begin{lem}\label{imag}
Let $Z_i$ and $Z_j$ be distinct. Suppose that the intersection of $s_i$ and $s_j$ contains a subword of the form $L=w\phi{w}^{-1}$ where $\phi$ is a special letter. Let $L_i$ and $L_j$ be the images of $L$ under $Z_i$ and $Z_j$ respectively,
and $[L_i],[L_j]$ their $\sim$-classes in $(\Omega/\sim)^*$. Then $[L_i]$ and $[L_j]$ do not intersect as cyclic subwords of the $\sim$-class of $aUbU^{-1}$ in $(\Omega/\sim)^*$.
\end{lem}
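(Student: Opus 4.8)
The plan is to argue by contradiction: suppose $[L_i]$ and $[L_j]$ \emph{do} intersect as cyclic subwords of the $\sim$-class of $aUbU^{-1}$. The zone $Z_i$ identifies $s_i$ with $t_i^{-1}$ in $G_{_1} * G_{_2}$, so the image $L_i$ of the subword $L$ of $s_i$ is the word $L$ itself, letter for letter, up to $\sim$ at special letters; passing to $(\Omega/\sim)^*$ we get $[L_i]=[L]$, and likewise $[L_j]=[L]$. The point of this observation is that the two ``images'' are the \emph{same} word, sitting in two places of $[aUbU^{-1}]$. Since $\phi$ is a special letter, $[\phi]$ is fixed by the involution of $\Omega/\sim$, so $[L]=[w]\,[\phi]\,[w^{-1}]$ has odd length $\lambda:=2\ell(w)+1$, with $[w]$ as initial segment and its involute $[w^{-1}]$ as terminal segment. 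By Lemma \ref{od3}, $\ell(L)\le\ell(s_i)=\omega_i-1<l/2$, so $\lambda<l/2$.

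Thus $[L]$ occurs as a cyclic subword of $[aUbU^{-1}]$ (a cyclic word of even length $l$) in two places sharing at least one position; let $\rho\ge 0$ be the cyclic distance between them. If $\rho=0$ the two occurrences coincide, and I would rule this out by a short argument with minimality of the clique-picture (when $u_i=u_j$ the zones $Z_i,Z_j$ could be merged along $L$, lowering the degree of $v$ below $5$; when $u_i\ne u_j$ they would permit an amalgamation of $u_i$ with $u_j$ across $v$). So assume $\rho>0$. Then the two occurrences overlap in a segment of length $\lambda-\rho\ge 1$ that is simultaneously a proper initial and a proper terminal segment of $[L]$, so by Remark \ref{bor} the subword $Q$ of $[aUbU^{-1}]$ that they span has period $\rho$, with $\ell(Q)=\lambda+\rho\le 2\lambda-1<l$; in particular $Q$ is a genuine, non-wrapped subword.

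Now $Q$ is a word of period $\rho<\ell(Q)$ containing the subword $[L]=[w]\,[\phi]\,[w^{-1}]$ with $\ell(w)=(\lambda-1)/2\ge\rho/2$. This is exactly the configuration forbidden by Lemma \ref{le88}: read contrapositively, $Q$ — and hence $aUbU^{-1}$ — must contain a letter of order $2$, which contradicts Hypothesis B and completes the proof.

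The step I expect to be the main obstacle is making that last deduction literal, since $Q$ is a word in $(\Omega/\sim)^{*}$ rather than in $G_{_1} * G_{_2}$, whereas Lemma \ref{le88} is stated for words in a free product. Away from special positions a subword of $[aUbU^{-1}]$ lifts uniquely to a subword of the genuine word $aUbU^{-1}$, and there Lemma \ref{le88} applies verbatim, producing an honest letter of $R$ of order $2$; the delicate case is that in which the order-$2$ letter located by Lemma \ref{le88} sits at a special position of $[aUbU^{-1}]$, which would only force $a$ or $b$ to have even order. In that case the plan is to use the reflective symmetry of $aUbU^{-1}$ about its special letters, together with the periods $\rho$ of $Q$ and $l$ of $[aUbU^{-1}]$ (and, if needed, Theorem \ref{tm} and Lemma \ref{od2}), to show the configuration cannot occur — e.g.\ by forcing an amalgamation of cliques contrary to minimality, or a refinement of the generalised triangle group description contrary to its maximality. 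The remaining work — pinning down which of the special letters listed in \eqref{isteq1} and \eqref{isteq2} the letter $\phi$ is, and dispatching the $\rho=0$ case in detail — is routine bookkeeping.
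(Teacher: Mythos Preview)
Your overall strategy---assume the images overlap, extract a period from the overlap, and combine with the palindromic shape $w\phi w^{-1}$ to force an order-$2$ letter---is exactly the mechanism the paper uses. The gap is that you carry it out entirely in $(\Omega/\sim)^*$. In that monoid the involution has $[a]$ and $[b]$ as fixed points, so the conclusion of Lemma~\ref{le88} only says that some $\sim$-class $\xi$ satisfies $\xi=\xi^{-1}$; this is satisfied by $[a]$ and $[b]$ automatically and gives no information about orders in $G_{_1}$ or $G_{_2}$. You recognise part of this, but you locate the difficulty only at \emph{special positions} of $[aUbU^{-1}]$. That is not enough: even at a non-special position, the corresponding letter of $U$ may happen to be a power of $a$ or of $b$, and then its $\sim$-class is again $[a]$ or $[b]$. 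So your reduction ``away from special positions the word lifts uniquely and Lemma~\ref{le88} applies verbatim'' fails precisely when $U$ contains such letters---a case not excluded by Hypothesis~B. (Incidentally, at a special position the conclusion $[a]=[a^{-1}]$ does not even force $a$ to have even order; it is vacuous.)

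The paper circumvents this by never passing to $\sim$-classes in the main computation. The crucial extra ingredient---which you do not use---is Lemma~\ref{od2}: it shows that the images $L_i,L_j$ occupy positions in $aUbU^{-1}$ that are disjoint (mod~$l$) from the position of $\sigma$, and hence from the special letter $\phi$. Consequently the union $\varrho^*$ of the two images contains \emph{at most one} special position of $aUbU^{-1}$. The paper can then write down genuine letter-by-letter identities (equations (\ref{e1})--(\ref{e3})) among the letters $x_\mu$ of $\varrho^*$, valid except at that single special position, and a short case analysis on three candidate indices produces a letter that is genuinely its own inverse in $G_{_1}$ or $G_{_2}$.

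Separately, your treatment of the coinciding case $\rho=0$ does not work as stated. Coincidence of $[L_i]$ and $[L_j]$ as cyclic subwords of $[aUbU^{-1}]$ is a statement about residues of positions mod~$l$; it does not by itself produce an arc between $u_i$ and $u_j$ along which to amalgamate, nor a way to merge $Z_i$ and $Z_j$ at $v$. The paper instead observes that in this case the full images $t'_i,t'_j$ fit together to a word $\sigma^*$ that agrees with $\varrho^{-1}$ away from a special letter; since $\ell(\varrho)>l/2$, $\sigma^*$ and $\varrho$ must overlap in $aUbU^{-1}$, and the overlap yields an honest order-$2$ letter.
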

\begin{proof}
Let $\sigma$ be the intersection of $s_i$ and $s_j$.
Since $\sigma$ is non-empty, the zones $Z_i,Z_j$ are not consecutive, so $i-j\equiv\pm 2$ mod $5$.
Without loss of generality, suppose that $j\equiv i+2$ mod $5$.  Then 
 $\sigma$ is an initial segment of $s_i$ and a terminal segment of $s_j$. So $\sigma=z_{_{s(i)+1}} \ldots z_{_{s(j+1)-1}}$ where $s_j=z_{_{s(j)+1}}\dots z_{_{s(j+1)-1}}$ and $s_i = z_{_{s(i)+1}} \dots z_{_{s(j-1)-1}}$. Choose $L$ with maximal length among all subwords of $\sigma$ of that form.
Let  $\varrho$ be the union of $s_i$ and $s_j$. Then $\varrho=z_{_{s(j)+1}} \dots z_{_{s(j-1)-1}}$ and a cyclic permutation of the clique-label 
 of $v$
has the form $(\varrho z_{_{s(j-1)}} s_{_{j-1}}z_{_{s(j)}})^2$.

\medskip
Now  define $t'_j=z_{_{t(j)+1}}\dots z_{_{t(j+1)-1}}$ and $t'_i = z_{_{t(i)+1}} \dots z_{_{t(i+1)-1}}$. 
Then $t_i'$ is identical to the image $t_i$ of $s_i$, with
the possible exception of a special letter of $t_i$: the corresponding letter of $t_i'$ is also special, and these two special letters may be different powers of $a$ (or of $b$).  Similarly, $t_j'$ agrees with $t_j$ except possibly at a special letter.

Now define $\sigma_i$ to be the
terminal segment $\sigma_i=z_{_{d+1}} \dots z_{_{t(i+1)-1}}$ of $t'_i$, where $d:=t(i+1)+s(i)-s(j+1)$ mod $l$, and $\sigma_j$ to be the initial segment
$\sigma_j=z_{_{t(j)+1}} \dots z_{_{e-1}}$ of $t'_j$, where $e:=t(j) + s(j+1)-s(i)$ modulo $l$.

Then $\sigma_i,\sigma_j$ agree with the images $L_i,L_j$ of $\sigma$ under $Z_i,Z_j$ respectively, again with the possible exception that they may differ at a special letter.

\medskip
If $[L_i]$ and $[L_j]$ coincide as cyclic subwords
of $[aUbU^{-1}]$, then $\sigma_i$ and $\sigma_j$ coincide as cyclic subwords of $aUbU^{-1}$. In this case we can form the union 
$\sigma*=t'_i \cup t'_j = z_{_{t(i)+1}} \dots z_{_{t(m)-1}}$: a cyclic subword of $aUbU^{-1}$ that is disjoint from $\sigma$ and hence contains at most one special letter. But $\sigma^*$ is identically equal to
$\varrho^{-1}$, with the possible exception of a special letter of $\sigma^*$. 
It follows from 
 Lemma \ref{od3}
that $\ell(\varrho)>l/2$. Thus $\varrho \cap \sigma* \ne \emptyset$ and so without loss of generality 
some initial segment $\tau$ of $\varrho$ coincides with a terminal segment of $\sigma^*$.  Since the only special letter of $\varrho$ is $\phi$ which does not appear in
$\sigma^*$, $\tau$ does not contain a special letter.
It follows that $\tau^{-1}\equiv\tau$, whence $\tau$ contains a letter of order $2$, contrary to hypothesis.

\medskip
Suppose then that  $[L_i]$ and $[L_j]$ intersect but do not coincide. Consider the subword  $\varrho*=\sigma_i \cup \sigma_j$ of $t'_i \cup t'_j$. As above, $\varrho^*$ is a cyclic subword
of $aUbU^{-1}$ which is disjoint from $\sigma$ and hence contains at most one special letter.
Write $\varrho*=x_{_1} x_{_2} \ldots x_{_r}$ for some $r$.
 Note that $r$ is odd, since by definition
$\sigma$ begins and ends with letters in the same free factor, and hence the same holds for $\varrho$.

Assume without loss of generality that $\sigma_i,\sigma_j$ are the initial and terminal segments respectively of $\varrho^*$.  Then
$$\sigma_i=x_{_1} x_{_2} \ldots x_{_{\ell(L)}}$$ and $$\sigma_j=x_{_{r+1-\ell(L)}} x_{_{r+2-\ell(L)}} \ldots x_{_r}.$$
Since each of $\sigma_i,\sigma_j$ agrees with $\sigma^{-1}$
except possibly at a special letter of $\varrho^*$, it follows that, for $1\le\mu\le\ell(L)$,
\begin{equation}\label{e1}
x_{_\mu}=x_{_{r+\mu-\ell(L)}}
\end{equation}
unless one of $x_\mu,x_{_{r+\mu-\ell(L)}}$
is a special letter of $\varrho^*$.

 Also, since $\sigma^{-1}$ agrees with $\sigma$ except for the middle letter $\phi^{\pm 1}$, 
 it follows that for $1\le\mu\le\ell(L)$,
\begin{equation}\label{e2}
x_\mu=x_{_{\ell(L)+1-\mu}}^{-1}
\end{equation}
 unless either  $\mu= \frac{\ell(L)+1}{2}$
 or one of $x_\mu=x_{_{\ell(L)+1-\mu}}^{-1}$
 is a special letter of $\varrho$.

Similarly, for $r+1-\ell(L)\le\mu\le r$,
\begin{equation}\label{e3}
x_\mu =x_{2r-\ell(L)-\mu}^{-1}
\end{equation}
unless either $\mu=r-\frac{\ell(L)}2$
or one of $x_\mu =x_{2r-\ell(L)-\mu}^{-1}$
is a special letter  of $\varrho^*$.

Now consider the three letters $x_{\ell(L)+1-\frac{r+1}2}$,
$x_{\frac{r+1}2}$ and $x_{\frac{3(r+1)}2-\ell(L)-1}$
of $\varrho^*$.  We know at most one of these three
letters can be special.

Suppose first that neither of $x_{\ell(L)+1-\frac{r+1}2}$,
$x_{\frac{r+1}2}$ is special.  Then

\begin{align*}
x_{_{\frac{r+1}{2}}}=&~x_{_{\ell(L)+1-\frac{r+1}{2}}}^{-1}~~\mathrm{by~(\ref{e2})}\\
=&~x_{_{r+\ell(L)+1-\frac{r+1}{2}-\ell(L)}}^{-1}~~\mathrm{by~(\ref{e1})}\\
=&~x_{_{\frac{r+1}{2}}}^{-1}
\end{align*}
It follows that $x_{_{\frac{r+1}{2}}}$ has order $2$,
contrary to hypothesis.

Similarly, if neither $x_{\frac{r+1}2}$ nor $x_{\frac{3(r+1)}2-\ell(L)-1}$ is special, then
we may deduce that $x_{\frac{r+1}2}$  using (\ref{e1})
and (\ref{e3}).

Finally, suppose that $x_{\frac{r+1}2}$ is a special letter of $\varrho^*$.  Then $x_\mu=x_{r+1-\mu}^{-1}$
for each $\mu=1,2,\dots,\frac{r-1}2$.  In particular
$$x_{\frac{\ell(L)+1}2}=x_{r+1-\frac{\ell(L)+1}2}^{-1}.$$
But
$$x_{\frac{\ell(L)+1}2}=x_{r+1-\frac{\ell(L)+1}2}$$
by (\ref{e1}).  Hence $x_{\frac{\ell(L)+1}2}$ has order $2$, again contrary to hypothesis.  This completes  the proof.

\end{proof}
\begin{rem}
We remark that the first part of the proof of Lemma \ref{imag} does not assume any form for the intersection.
\end{rem}
\subsection{Proof of Theorem B}

We are now ready to  complete the proof of Theorem \ref{thmm2}. Our proof is by contradiction. Suppose that some interior vertex $v$ of $\Gamma$  fails to satisfy $C(6)$, then we 
know from Remark \ref{remn} that $v$ has exactly five incident zones $Z_{_1},\ldots , Z_{_5}$. We also assume by  Lemma \ref{od3} 
and Remark \ref{remn} 
that $\omega_i<l/2$ for $i=1, \ldots , 5$. The proof is divided into two cases.

\medskip
Case 1.  $s(4)=l/2.$

\medskip
Case 2.  $s(4)>l/2.$

\medskip
\begin{proof}[Proof of {Case $1$}]

\medskip
For Case $1$ (alternatively case \ref{isteq1}), take $N$ to be the longer of $X_{_3}$ and $Y_{_4}$ and $M$ to be the longer of $X_{_5}$ and $Y_{_2}$. In each case $M$ is either an initial segment of $U$ or a terminal segment of $U^{-1}$. Similarly $N$ is either a terminal segment of $U$ or an initial segment of $U^{-1}$. Also $\ell(N), \ell(M)\geq \frac{l}{4}$ for otherwise $\ell(s_i)\geq l/2$ for some $i\in \lbrace 1,2, 5\rbrace$, contradicting hypothesis.

\medskip
By Lemma \ref{pee}, $U$ has an initial segment of period $\rho\leq 2(\ell(U)-\ell(M))$ and a terminal segment of period $\gamma\leq 2(\ell(U)-\ell(N))$. If $\gamma < 2(\ell(U)-\ell(N))$, then $L=z_{_{l-\frac{\gamma}{2}}} z_{_{l+1-\frac{\gamma}{2}}} \ldots z_{_{l+\frac{\gamma}{2}}}$ is a proper subword of $s_{_2}\cap s_{_5}$.  It follows from Lemma \ref{imag} that at least one of the images of $L$ under $Z_{_2}$ and $Z_{_5}$ is identically equal to  a periodic subword of $U$ (with period $\gamma $). This can not happen by Lemma \ref{le88}.

\medskip
Otherwise $U$ has period $\gamma= 2(\ell(U)-\ell(N))$. It is easy to see that $$\ell(X_{_1}),\ell(Y_{_1})\geq \ell(U)-\ell(N)$$ as otherwise either $\ell(s_{_2})\geq l/2$ or $\ell(s_{_5})\geq l/2$, thereby contradicting hypothesis. Hence the result follows by applying Lemma \ref{le8} to $s_{_1}$.

\end{proof}

For Case $2$ (alternatively case \ref{isteq2}), take $N$ to be the longer of $X_{_3}$ and $Y_{_1}$ and $M$ to be the longer of $X_{_5}$ and $Y_{_2}$. In each case $M$ is either an initial segment of $U$ or a terminal segment of $U^{-1}$. Similarly $N$ is either a terminal segment of $U$ or an initial segment of $U^{-1}$. Also $\ell(N), \ell(M)\geq \frac{l}{4}$. The proof is subdivided into three sub-cases namely: 

\medskip
Case 2a.  Each of  $M'$  or $N'$ is identically equal to a subwords of $U^{-1}$ or  $U$.

\medskip
Case 2b.  Exactly one of $M'$  or $N'$ is identically equal to a subword of $U$ or $U^{-1}$.

\medskip
Case 2c.   Both $M'$ and $N'$ are not identically equal to a subword of $U^{-1}$ or  $U$.

\begin{proof}[Proof of {Case $2a$}]
\medskip

Using Lemma \ref{peece}, we conclude that $U$ has a periodic initial segment $M_{_\gamma}^+$ of period $\gamma\leq\ell(U)-\ell(M)$ and a periodic terminal segment $N_{_\rho}^+$ of period $\rho\leq\ell(U)-\ell(N)$. Moreover by Remark \ref{remn}, the two segments intersect in a segment $S$ with $\ell(S)=\ell(M_{_\gamma}^+)+\ell(N_{_\rho}^+)-\ell(U)\geq \rho+\gamma +1>\rho+\gamma$. Hence $U$ is periodic with period $\nu=gcd(\rho,\gamma)\leq \text{min}\lbrace \rho,\gamma\rbrace\le\min\{\ell(Y_1),\ell(X_1)\} $ by Corollary \ref{ctm}. It follows that $s_{_1}$ contains a subword the form $wx_{_1}{w}^{-1}$ with $\ell(w)=\frac{\nu}{2}$. The result will then follow from Lemma \ref{le8}.
\end{proof}

\begin{proof}[Proof of {Case $2b$}]

\medskip
Suppose $N'$ is identically equal to a subword of $U$ or $U^{-1}$. By Lemma \ref{pee}, $U$ is periodic with period $\nu\leq 2(\ell(U)-\ell(M))$. If $\nu<2(\ell(U)-\ell(M))$, the result follows by applying Lemma \ref{le8} to $s_{_1}$  as in Case 2a.

\medskip Hence we may assume that $\nu=2(\ell(U)-\ell(M))$.
 Then by Lemma \ref{pee} $U$ a has a terminal segment of the form $u_{_1}wu_{_2}w^{-1}$ for some letters $u_{_1}, u_{_2}$ and some word $w$ with $\ell(w)=\frac{\nu}{2}-1$. 

Also by Lemma \ref{peece}, $U$ has a periodic terminal segment $N_{_\rho}^+$ of length $\ell(N_{_\rho}^+)=\ell(N)+ \rho$ and  period $\rho\leq\ell(U)-\ell(N)$. If $\rho +2\leq\nu$, then $N_{_\rho}^+$ has a subword of the form $\hat{w}u_{_2}\hat{w}^{-1}$ where $\hat{w}$ is the terminal segment of $w$ of length $\rho/2$.
This contradicts Lemma \ref{le88}.

Finally if $\rho +2>\nu$, then $\rho+\nu\leq 2\rho+1\leq\ell(N)+\rho$. It follows from Corollary \ref{ctm} that $U$ has period $\lambda=\text{gcd}(\nu, \rho)$. 
Hence we can apply Lemma \ref{le8} to $s_{_2}$ since $$\frac\lambda2\le\lambda-1\le\rho-1\leq \ell(U)-\max\{\ell(X_3),\ell(Y_1)\}-1\le\text{min}\lbrace X_{_2}, Y_{_2}\rbrace.$$

\medskip
The proof for the case where $M'$ is identically equal to a subword of $U$ or $U^{-1}$ is similar.
\end{proof}

\begin{proof}[Proof of {Case $2c$}]

\medskip
Then $U$ is periodic with periods $\rho\leq 2(\ell(U)-\ell(M))$ and $\gamma\leq 2(\ell(U)-\ell(N))$. If $\rho < 2(\ell(U)-\ell(M))$ or $ \gamma< \rho$, then the result follows by applying Lemma \ref{le8} to $s_{_2}$. Similarly if $\gamma < 2(\ell(U)-\ell(N))$ or $ \rho< \gamma$, then the result follows by applying Lemma \ref{le8} to $s_{_1}$.

\medskip
Hence  suppose $\rho=\gamma =2(\ell(U)-\ell(M))=2(\ell(U)-\ell(N))$.
Then we have $\ell(X_5)\le\ell(M)=\ell(U)-\gamma/2$,
whence $\ell(X_1)=\ell(U)-1-\ell(X_5)\ge\gamma/2-1$
and so $\ell(Y_1)=\ell(s_1)-1-\ell(X_1)<\ell(U)-1-\ell(X_1)\le\ell(U)
-\gamma/2=\ell(N)$.  It follows that $N=X_3$.  By a similar argument, $M=X_5$.

 It follows from Lemma \ref{pee} that $U$ has 
 an initial segment of the form  $w_{_1}z_{s(3)}w_{_1}^{-1}u_{_1}$ and a terminal segment of the form  $u_{_2}w_2z_{s(1)}^{-1}w_2^{-1}$ for some
 letters $u_{_1}=z_\rho,u_{_2}=z_{\frac{l}2-\rho}$, and words $w_{_1},w_{_2}$ satisfying $\ell(w_{_1})=\ell(w_{_2})=\frac{\rho}{2}-1$.
 
 We also have from Lemma \ref{pee} that $t(3)+\omega_3\equiv s(3) \equiv \frac\rho2$ mod $l$.
The $\frac{l-\rho}2$-th letter of $s_3$ is $x_3=z_{\frac{l}2}=b$.  The corresponding letter of $t_3$ is therefore $z_{s(3)-\frac{l-\rho}2}=z_{\frac{l}2+\rho}$,
which is not a special letter of $t_3$.  Hence $z_{\frac{l}2+\rho}=z_{\frac{l}2}^{-1}=b^{-1}$.
Therefore $u_2=z_{\frac{l}2-\rho}=z_{\frac{l}2+\rho}^{-1}=b$.
A similar argument shows that $u_1=z_\rho=z_{l-\rho}^{-1}=y_5=a$.

  Moreover $$w_{_1}z_{s(3)}w_{_1}^{-1}a=z_1\cdots z_\rho$$ is a cyclic conjugate of $$bw_{_2}z_{s(1)}^{-1}w_{_2}^{-1}=z_{\frac{l}2-\rho}\cdots z_{\frac{l}2-1},$$ by the periodicity of $U$.

Now apply Lemma \ref{nz2} to this pair of cyclically
conjugate words to obtain
$$bw_{_2}z_{s(1)}^{-1}w_{_2}^{-1}=\prod_{t=1}^s b^{\beta(t)}Vx^{\alpha(t)}V^{-1}$$
for some letter $x$, some word $V$ and some integer $s$
and some $\alpha(t),\beta(t)\in\{\pm 1\}$.  The integer $s$ in the statement
of Lemma \ref{nz2} is defined to be $k/m$, where $k=\rho/2$
and $m=\gcd(j,k)$.  Here $j$ in turn is defined as 
the number of places by which one has to cyclically permute
$bw_{_2}z_{s(1)}^{-1}w_{_2}^{-1}$ to obtain 
$w_{_1}z_{s(3)}w_{_1}^{-1}a$ -- in other words
$$z_1\cdots z_\rho \equiv z_{\frac{l}2-\rho+j}\cdots z_{\frac{l}2-1}z_{\frac{l}2-\rho}\cdots z_{\frac{l}2-\rho+j-1}.$$

\medskip
Suppose first that $s$ is even. Then Lemma \ref{nz2} gives
that $b=z_{s(1)}$, $a=z_{s(3)}^{-1}$, and $x=a$ in the above
expression.  This $U$ has a terminal segment
$$bw_{_2}z_{s(1)}^{-1}w_{_2}^{-1}=\prod_{t=1}^s b^{\beta(t)}Va^{\alpha(t)}V^{-1}$$
Similarly $U$ has an initial segment of the form
$$w_{_1}z_{s(3)}w_{_1}^{-1}a=\prod_{t=1}^s V^{-1}b^{\delta(t)}Va^{\gamma(t)}$$
Putting these together, using the periodicity of $U$, we obtain
$$U=V^{-1}\prod_{t=1}^p b^{\zeta(t)}Va^{\eta(t)}V^{-1}$$
for some integer $p$ and some $\zeta(t),\eta(t)\in\{\pm 1\}$.

Thus $UbU^{-1}\in\<a,V^{-1}bV\>$ and we have a refinement of
our generalised triangle group description of $G$, contrary to our underlying hypotheses.

\medskip
Next suppose that $s$ is odd in Lemma \ref{nz2}.  Then
$\{b,z_{s(1)}^{-1}\}=\{a,z_{s(3)}\}$, and we
have an expression
$$bw_{_2}z_{s(1)}^{-1}w_{_2}^{-1}=\prod_{t=1}^s b^{\beta(t)}Vz_{s(1)}^{\alpha(t)}V^{-1}$$
and an analogous expression
$$w_{_1}z_{s(3)}w_{_1}^{-1}a=\prod_{t=1}^s V^{-1}z_{s(3)}^{\delta(t)}Va^{\gamma(t)}$$
Again we can fit these together using the periodicity of $U$
to get an expression for $U$.  

If $b=z_{s(3)}$ and $a=z_{s(1)}^{-1}$ then again this has the form
$$U=V^{-1}\prod_{t=1}^p b^{\zeta(t)}Va^{\eta(t)}V^{-1}$$
which leads to a refinement, contrary to hypothesis.

If on the other hand $b=a$ and $z_{s(3)}=z_{s(1)}^{-1}$
then we obtain an expression of the form
$$U=Vz_{s(3)}^{\eta(0)}V^{-1}\prod_{t=1}^p a^{\zeta(t)}Vz_{s(3)}^{\eta(t)}V^{-1}\in\<a,Vz_{s(3)}V^{-1}\>$$

As before, this yields a refinement, contrary to hypothesis.

 This completes the proof.

\end{proof}

\section{Applications}\label{sec8}
Here we give the  proofs  for Theorems  \ref{corro 1}, \ref{corro 2} and \ref{corro 3}. As before we suppose that the triangle group description for $G$ is maximal. By Theorems \ref{thmm1} and \ref{thmm2}, a minimal clique-picture over $G$ satisfies the $C(6)$ property.

\begin{proof}[\textbf{Proof of Theorem \ref{corro 1}}]

\medskip
Suppose that there is a nontrivial word $w$ in  $H$, $G_{_1}$ or $G_{_2}$ that is trivial in $G$. Then we obtain a minimal
picture $P$ over $G$ on $D^2$ with boundary label $w$.  We prove the theorem by induction on the number
of cliques in $P$; the case of $0$ cliques corresponds to the empty picture $P$, for which there is nothing to prove.

Suppose first that some clique $v$ in $P$ is not simply connected, and let $C$ be one of the boundary components
of the surface carrying the clique $v$.  By an innermost argument, we may assume that $C$ bounds a disc $D\subset D^2$ such that every clique in $P\cap D$ is simply-connected.

Now the label on $C$ is a word in $H$ which is the identity in $G$, and $P\cap D$ has at least one fewer clique than $P$,
so by inductive hypothesis the label is trivial in $H$.

We then amend $P$ by replacing $P\cap D$ by a picture over $H$, all of the arcs, vertices and regions of which will
belong to the same clique as $v$ in the amended picture $P'$.  Since $C\cap P$ was not empty (for otherwise $D$ is contained in $v$), the new picture $P'$ also has fewer cliques than $P$, and the result follows from the inductive hypothesis.

Hence we are reduced to the situation where every clique in $P$ is simply connected, and hence we may form from $P$ a 
clique-picture $\Gamma$  over $G$ on $D^2$ with boundary label $w$. Without loss of generality, we may assume that
$\Gamma$ is minimal.
It follows  that $\Gamma$  satisfies $C(6)$.

\medskip
If $\Gamma$  is empty, then $w$ is already trivial in  $H$, $G_{_1}$ or $G_{_2}$ and so we get a contradiction. On the other hand suppose that $\Gamma$  is non-empty. If no arcs of $\Gamma$  meet $\partial D^2$, then  $\Gamma$  is a spherical picture (i.e a picture on $S^2$) and the $C(6)$ property implies $\chi(S^2)\neq 2$. This  contradiction implies $G_{_1}\rightarrow G$  and $G_{_2}\rightarrow G$ are both injective. 

\medskip
Suppose then that some arcs of $\Gamma$ meet $\partial D^2$. Then  $w\not\in G_{_1}, G_{_2}$. Moreover if $w$ is a word in $\lbrace a,UbU^{-1}\rbrace$, then the $C(6)$ condition combined with [\cite{LS}, Chapter $V$ Corollary 3.3] implies that some boundary  clique $v_0$ has at most degree three.

\medskip
 Under Hypothesis A, by  Corollary \ref{lz}, 
$v_0$ is connected to $\partial D^2$ by a
zone $Z_i$ with $\omega_i>l$. Either a refinement is possible by Lemma
 \ref{refine} 
or we can amalgamate $v_0$ with $\partial D^2$ to form a new clique-picture with fewer cliques  whose boundary label also belongs to $H$. The former possibility contradicts the maximality of the triangle group description for $G$ while the latter contradicts the minimality of $\Gamma$.

\medskip
Under Hypothesis B, it follows from Remark \ref{remn} that $v_0$ is connected to $\partial D^2$ by a zone $Z_i$ with $\omega_i>l/2$. 
By Lemma \ref{od3},
 $U$ has a letter of order $2$ or a refinement is possible, contradicting hypothesis. Otherwise as before we can amalgamate $v_0$ with $\partial D^2$ to form a new clique-picture with fewer cliques. Either case is a contradiction.
\end{proof}

\begin{proof}[\textbf{Proof of Theorem \ref{corro 2}}]

\medskip
Any clique-picture $\Gamma$ over $G$ satisfies $C(6)$, and hence a quadratic isoperimetric inequality (\cite{LS}, the Area Theorem of Chapter $V$). In other words, there is a quadratic function $f$ such that any word of length $m$ representing the identity element of $G$ is the boundary label of a clique-picture with at most $f(m)$ cliques. Also there is a bound (as a function of $m$) on the length of any clique-label
of $\Gamma$. By Corollary \ref{lz} and 
Lemma \ref{od3}, 
a clique with label of length $ml$ has degree at least $m$. Moreover,
there is a linear isoperimetric inequality of the form \[\ell(\Gamma)\geq \sum_{v}[\text{deg}(v)- 6]\]
where the sum is over all cliques $v$. Hence no clique can have degree greater than $\ell(\Gamma) +6f(\ell(\Gamma))$. Since no zone has length greater than $l$, this gives an upper bound of $l[\ell(\Gamma) +6f(\ell(\Gamma))]$ on the length of any clique-label.
Since both the number of cliques and the length of any clique-label are bounded, there are only a finite number of connected graphs that could arise as clique-pictures for words of length less than or equal to that of a given word $w$. Moreover, any such graph can be labelled as a clique-picture only in a finite number of ways. For any such potential labelling, we may check whether or not the clique-labels are equal to the identity in $H$, and whether or not the region-labels are equal to the identity in $G_{_1}$ or $G_{_2}$, using the solution to the word problem in $H$, $G_{_1}$ and $G_{_2}$ respectively. Hence we may obtain an effective list of all words of length less than or equal to $\ell(w)$ that appear as boundary labels of connected clique-pictures over $G$. In particular, we may check, for all cyclic subwords $w_{_1}$ of $w$, whether or not $w_{_1}g$ belongs to this list for some letter $g\in G_{_1}\cup G_{_2}$. (Note that this check also uses the solution to the word problem in $G_{_1}$ and $G_{_2}$, and that the letter $g$, if it exists, is unique by the Freiheitssatz). If so, then $w$ is a cyclic conjugate of $w_{_1}w_{_2}$ for some $w_{_2}$ , so $w = 1$ in $G$ if and only if $g=w_{_2}$ in $G$, which we may assume inductively is decidable. Hence the word problem is soluble for $G$.
\end{proof}

\begin{proof}[\textbf{Proof of Theorem \ref{corro 3}}]

\medskip
Suppose by contradiction that $W_{_1}=1=W_{_2}$ in $G$. We can assume by Theorem \ref{corro 1} that $\ell(W_{_1})\neq 1 \neq \ell(W_{_2}).$ We obtain a minimal clique-picture $\Gamma$ over $G$ with boundary label  $W_{_1}$ or $W_{_2}$. Suppose without loss of generality that $\Gamma$ has boundary label $W_{_1}$, form a new clique picture $\tilde{\Gamma}$ with boundary label $W_{_2}^{-1}$ by adding  a vertex labelled $R^{-n}$. $\tilde{\Gamma}$ has only one boundary vertex and is reduced since $\Gamma$ is minimal. It follows from [\cite{LS},  Corollary 3.4 of Chapter $V$] that $\tilde{\Gamma}$ has a single vertex or clique. Hence up to conjugacy $W_{_2}$ (and hence $W_{_1}$) is a word in $H$ which is trivial and has length strictly less than the length of $R^n$. This contradicts the Spelling Theorem, hence the result.
\end{proof}

\begin{proof}[\textbf{Proof of Theorem \ref{corro 4}}]
We construct a pushout square of aspherical
CW-complexes and embeddings
which realises Figure \ref{push-out} on fundamental groups.
The result follows from this construction.

To begin the construction, choose disjoint Eilenberg-MacLane complexes
$X_A,X_B$ of types $K(A,1),K(B,1)$ for the cyclic
groups $A,B$ respectively, and connect their base-points
by a $1$-cell $e_0$ to form a $K(A*B,1)$-complex $X_0:=X_A\cup e_0\cup X_B$.

In a similar way, we choose disjoint Eilenberg-MacLane complexes $X_{G1}=K(G_{_1},1)$ and $X_{G2}=K(G_{_2},1)$ and connect
their base-points by a $1$-cell $e_1$ to form a
$K(G_{_1}*G_{_2},1)$-complex $X_G:=X_{G1}\cup e_1\cup X_{G2}$.

The embedding $A*B\to G_{_1}*G_{_2}$ can be realised by a
continuous map $f:X_0\to X_G$.  Since each of $A,B$ is contained in a conjugate of $G_{_1}$ or of $G_{_2}$, we may assume without loss of generality that each of $X_A,X_B$ is mapped
by $f$ into $X_{G1}\sqcup X_{G2}$.  Replacing $X_{G1}$ and/or $X_{G2}$ by appropriate mapping cylinders, we may assume that 
$f$ maps $X_A\cup X_B$ injectively.

The $1$-cell $e_0$ is mapped by $f$ to a path in the homotopy
class of the word $U\in G_{_1}*G_{_2}$.  Let $X_1$ be the mapping cylinder of $f|_{e_0}:e_0\to X_G$.  Then $X_1$ is a $K(G_{_1}*G_{_2},1)$-complex, and $X_1=X_{G1}\cup X_{G2}\cup e_0\cup e_1\cup e_2$, where $e_2$ is a $2$-cell.  Indeed,
$X_1$ collapses to $X_G=X_1\setminus\{e_0,e_2\}$ across $e_2$.  Another important property of $X_1$ is that it contains $X_0$ as a subcomplex.

Now $H$ is a one-relator quotient of $A*B$, so we may
form a $K(H,1)$-complex $X_2$ from $X_0$ by adding a single $2$-cell $e_3$, together with cells in dimensions $3$ and above.

Finally, we form a complex $X$ from $X_1$ and $X_2$ by 
identifying their isomorphic subcomplexes $X_0$.
By the van-Kampen Theorem, the spaces $X_0=X_1\cap X_2$,
$X_1,X_2$ and $X=X_1\cup X_2$ realise the pushout diagram
in Figure \ref{push-out} on fundamental groups.  It remains
therefore only to show that $X$ is a $K(G,1)$ space, namely that $X$ is aspherical.

Now by Theorem \ref{corro 1}, each of the maps $G_{_1}\to G$,
$G_{_2}\to G$ and $H\to G$ is injective.  It follows that the
maps $A\to G$ and $B\to G$ are also injective.  By the Kurosh Subgroup Theorem for free products, it follows that the
kernels of the maps $A*B\to G$ and $G_{_1}*G_{_2}\to G$ are free,
and hence have homological dimension  $1$.
By \cite[Theorem 4.2]{jh1}, it suffices to prove that $\pi_2(X)=0$.

\medskip
Suppose then that $h:S^2\to X$ is a continuous map.  We show that $h$ is nullhomotopic in $X$ in a series of stages.
Up to homotopy, we may assume that $h$ maps $S^2$ into the $2$-skeleton of $X$, which consists of the $2$-skeleta of $X_{G1}$ and $X_{G2}$ together with the $1$-cells $e_0,e_1$ and the $2$-cells $e_2,e_3$.  We may also assume that $h$ is
transverse to the $2$-cells of $X$, and that the restriction
of $h$ to $h^{-1}(X^{(1)})$ is transverse to the $1$-cells of $X$.  Thus the preimages of the $2$-cells and $1$-cells of
$X$ in $S^2$ under $h$ form a picture on $S^2$.

Consider the sub-picture $\mathcal{P}$ formed by the preimages of the $2$-cell $e_3$ and the $1$-cell $e_0$.  This is a picture over
the generalised triangle group $H$.
Suppose that some component $\mathcal{P}'$ of $\mathcal{P}$ is on a non-simply-connected surface $\Sigma\subset S^2$, and let $\gamma$ be a boundary component of $\Sigma$ that separates $\Sigma$ from
a disc $D$ such that  $\partial D=\gamma$ and $h(D)$ involves
$2$-cells from $X_G$.  Then $h|_\gamma$ is null-homotopic in $X$, and hence in $X_2$ by Theorem \ref{corro 1}.  Let
$D'$ be a disc bounded by $\gamma$ and extend $h|\gamma$
to a map $D'\to X_2$, which we also call $h$ by abuse of
notation.  Then in $\pi_2(X)$ we can express the homotopy
class of $h$ as the sum of two classes $[h(D'\cup (S^2\setminus D))]$ and $[h(D\cup -D')]$, where $-D'$
denotes $D'$ with the opposite orientation.  If we can show that
each of these is nullhomotopic, then so is $h$, and the proof
is complete.

This reduces the problem to the case where every component
of $\mathcal{P}$ is a disc-picture over $H$.  Collapsing each such component to a point gives a clique-picture over $G$.
By Theorems \ref{thmm1} and \ref{thmm2} any reduced clique-picture satisfies C(6), so either the clique-picture is empty,
or an amalgamation of cliques is possible.

Amalgamation of cliques amounts to
amending $h$ by adding the class of a map $S^2\to X_2$.
Since $X_2$ is aspherical, this map is null-homotopic in $X_2$ and hence also in $X$.  Thus amalgamation of cliques does not change the homotopy-class of $h$ in $\pi_2(X)$.
After a finite number of clique-amalgamations, we are reduced to the case of an empty clique-picture.

In this case, $h(S^2)\subset X_1$.
Since $X_1$ is aspherical, $h$ is null-homotopic in $X_1$, and hence also in $X$. 

This completes the proof.

\end{proof}

\end{document}